\documentclass{article}

\usepackage{fullpage}
\newsavebox{\TableThree}
\usepackage{amsmath,amsthm,amssymb,xurl}
\usepackage{mathtools}
\usepackage{xurl}
\usepackage{enumitem} 
\usepackage{xspace} 
\usepackage{xifthen}
\usepackage[hidelinks]{hyperref}
\usepackage{tikz,authblk}
\usepackage{textcomp} 
\usepackage{fancybox}
\usepackage{float}
\usepackage{longtable}
\usepackage{authblk}
\usepackage{xcolor}
\usepackage{soul} 
\usepackage{multicol}
 \usepackage{array}
 \usepackage{ragged2e}
 \usepackage{caption}
\newcolumntype{R}[1]{>{\RaggedRight}p{#1}}
\usepackage[normalem]{ulem}

\usepackage{pseudocode}
\usepackage[mathscr]{euscript}

\usepackage{algorithm}
\usepackage[noend]{algpseudocode}

\usepackage{xparse}

\DeclareDocumentCommand{\NF}{o m m}%
{%
\IfValueTF{#1}
 {\ensuremath{#1\textsc{-nf}(#2,#3)}\xspace}
 {\ensuremath{\textsc{nf}(#2,#3)}\xspace}%
}

\DeclareDocumentCommand{\F}{o}
{%
  \IfValueTF{#1}
    {\ensuremath{\textsc{f}(#1)}\xspace}
    {\ensuremath{\textsc{f}}\xspace}%
}

\newcounter{algcounter}

\algrenewcommand\algorithmicrequire{\textbf{Input:}}
\algrenewcommand\algorithmicensure{\textbf{Output:}}

\usetikzlibrary{shapes.geometric}

\newcommand{\zed}{{\ensuremath{\mathbb{Z}}}}

\newcommand{\lcm}{{\rm lcm}}

\newcommand{\Aut}{{\ensuremath{\mathsf{Aut}}}}

\newcommand{\bb}{\mathbb}
\newcommand{\Z}{\bb{Z}}

\newcommand{\FF}{\bb{F}}

\newcommand{\al}{\alpha}
\newcommand{\de}{\delta}
\newcommand{\eps}{\epsilon}
\newcommand{\es}{\emptyset}
\newcommand{\la}{\lambda}
\newcommand{\ol}{\overline}
\newcommand{\wt}{\widehat}
\newcommand{\wti}{\widetilde}

\newcommand{\bx}{{\bf x}}
\newcommand{\by}{{\bf y}}
\newcommand{\pointy}[1]{{\langle#1\rangle}}

\newcommand{\ord}{\text{ord}}
\newcommand{\sm}{\setminus}
\newcommand{\wh}{\widehat}

\newlength{\boxwidth}
\theoremstyle{plain}
\newtheorem{theorem}{Theorem}[section]
\newtheorem{proposition}[theorem]{Proposition}
\newtheorem{lemma}[theorem]{Lemma}
\newtheorem{corollary}[theorem]{Corollary}
\theoremstyle{definition}
\newtheorem{remark}[theorem]{Remark}
\newtheorem{example}[theorem]{Example}
\newtheorem{definition}[theorem]{Definition}

\floatstyle{plain}
\newfloat{flt}{htbp}{.flt}[section]

\allowdisplaybreaks

\title{The theory of  $(\lambda,\mu)$-fold factorizations of groups}

\author[1]{Donald L.\ Kreher}
\affil[1]{Department of Mathematical Sciences,
Michigan Technological University,
Houghton, MI 49931, U.S.A.}
\author[2]{Shuxing Li}
\affil[2]{Department of Mathematical Sciences, University of Delaware, Newark, DE 19716, U.S.A.}
\author[3]{Douglas R. Stinson}
\affil[3]{David R.\ Cheriton School of Computer Science, University of Waterloo, Waterloo ON, N2L 3G1, Canada}

\date{}

\begin{document}
\maketitle

\begin{abstract}
Motivated by recent work on $\lambda$-fold near-factorizations by Kreher, Li, and Stinson (Designs, Codes and Cryptography, 2026), we introduce and systematically investigate $(\lambda,\mu)$-fold factorizations as combinatorial structures with prescribed representation multiplicities. Such a factorization consists of two subsets of a finite group whose products represent each nonidentity element exactly $\lambda$ times and the identity exactly $\mu$ times. These structures generalize $\lambda$-fold near-factorizations and provide a common framework for several families arising in combinatorial design theory, including strong external difference families with two equal-sized subsets, as well as binary sequence pairs with two-level correlation. Their associated $\{0,1\}$-matrices satisfy the Bridges–Ryser equation $XY=\lambda(J-I)+\mu I$, linking the factorization problem to a classical matrix framework for combinatorial designs. We establish the foundational theory of $(\lambda,\mu)$-fold factorizations, provide two general constructions, and prove several nonexistence results for finite abelian groups.
\end{abstract}

\section{$(\la,\mu)$-fold factorizations of groups}
\label{sec-intro}

Factoring a group into products of subsets provides a powerful way to understand how the group can be decomposed into smaller components, while also revealing relevant structural properties of the subsets involved. Owing to its fundamental nature, group factorization has emerged as a vibrant research area deeply rooted in combinatorial design theory, with broad applications across graph theory, coding theory, functional analysis, and number theory \cite{Szabo04,SS}.

\begin{definition}
\label{Fact.defn}
Let $G$ be a finite multiplicative group with identity $e$.  Suppose that $S, T \subseteq G$. 
The subset pair $(S,T)$ is a \emph{$(\la,\mu)$-fold factorization} of $G$ if the following properties hold:
\begin{enumerate}
\item every non-identity  element $g \in G$ can be expressed as a product $g= xy$ with $x \in S$ and $y \in T$ in exactly $\la$ ways, 
\item the identity element $e$ can be expressed as a product $e= xy$ with $x \in S$ and $y \in T$ in exactly $\mu$ ways. 
\end{enumerate}
$S$ and $T$ are called the \emph{factors} in the factorization.
Moreover, if  $s=|S|$ and $t=|T|$, then the subset pair $(S,T)$ is a $(\la,\mu)$-fold factorization of $G$ of \emph{size $(s,t)$} and it is denoted $\F[G,s,t,\la,\mu]$. 
\end{definition}
Throughout the paper, we  use $e_G$ or  just $e$ to represent the identity element of the group $G$ and $n$ to represent the order of the group $G$. Clearly, an $\F[G,s,t,\la,\mu]$ exists only if the basic counting relation holds:
\begin{equation}
\label{eqn-counting}
st = \la(n-1)+\mu.
\end{equation}
{In Definition \ref{Fact.defn}, we are defining factorizations in multiplicative groups. The definitions of course can be modified  in an obvious way to apply to additive groups, by replacing products by sums in the relevant group. (This comment also applies to various other definitions in this paper involving groups.) We note that many of the constructions and examples in later sections of the paper involve additive abelian groups.}

There has been a substantial body of literature on various structures that are either closely related to, or subfamilies of, $(\la,\mu)$-fold factorizations of finite groups. Specifically, a $(1,1)$-fold factorization is the classical \emph{factorization}, which has been a long-standing research topic \cite{DB53a,DB53b,Dinitz,FG74,H49,H50,Redei,Sands57,Sands79,Szabo17}. A $(1,0)$-fold factorization has been termed a \emph{near-factorization}; these have been recently and historically studied \cite{BHS,DB56,CGHK,KPS2,KMS,Pech03,Pech04}.  A $(\la,0)$-fold factorization is also known as  \emph{$\la$-fold near-factorization}; these were studied in \cite{HMW,KLS,KMS,LM}. Very recently, $\la$-fold near-factorization in association schemes has been investigated in \cite{HLMM}. A special subfamily of $\la$-fold near-factorizations are $(\la,0)$-fold factorizations having  size $(s,s)$. These are equivalent to \emph{strong external difference families} having two subsets of size $s$ \cite{BJWZ,HP,JL19,LM,KPS,PS}, which are motivated by algebraic manipulation detection codes, having applications in cryptography \cite{PS}. Suppose $(S,T)$ is a $(\la,\mu)$-fold factorization of $G$ with $T=\{ g^{-1}: g \in S \}$. Then $S$ is a \emph{difference set} in $G$, which is a classical research object in design theory \cite[Chapter VI]{BJL}.  A $(1,2)$-fold factorization is also known as a \emph{$1$-overlapped factorization}. $1$-overlapped factorizations have been used to construct  thin Lehman matrices \cite{Pace,SS13,SS14,Shi}. In the context of sequence design, $(\la,\mu)$-fold factorizations of the cyclic group $\Z_n$ with $\la \ne \mu$ have been studied under the name of \emph{binary sequence pairs with two-level correlation} \cite{JS08,JS10,LX,PXA}. As a relaxation to the $(1,1)$-fold factorization, the problem of packing a group $G$ with translates of a fixed subset of $G$ has been considered in \cite{B96,B97}.

{In the aforementioned literature and in this paper, the factorization $(S,T)$ requires that the two factors $S$ and $T$ are subsets of the relevant group $G$. We note that \cite{YMSJ} studies  \emph{multifold factorizations} of a group $G$, where two or more factors are allowed in the factorization and each factor can be a multiset of $G$ (i.e., repeated elements are allowed).}

In their classical work \cite{BR}, Bridges and Ryser discussed the following problem:
\begin{quote}
Given positive integers $\lambda$ and $\mu$, 
find nonnegative $n$ by $n$ integral matrices $X$ and $Y$ such that the following matrix equation holds:
\begin{equation}
\label{eqn-matfac}
XY=\la(J_n-I_n)+\mu I_n,
\end{equation}
where $J_n$ denotes an $n \times n$ matrix of ones and $I_n$ denotes an $n \times n$ identity matrix.
\end{quote}
The matrix perspective as phrased in equation \eqref{eqn-matfac} provides a unified framework for many problems in combinatorics. For instance, suppose nonnegative integral matrices $X$ and $Y$ of order $m^2+m+1$ satisfy the matrix equation
$$
XY=(J_{m^2+m+1}-I_{m^2+m+1})+(m+1)I_{m^2+m+1},
$$
such that neither $X$ nor $Y$ is a permutation matrix. Then $X=Y^{T}$ and $X$ is the incidence matrix of a projective plane of order $m$ \cite[Corollary 3.3]{BR}. A related variation of the Bridges--Ryser equation \eqref{eqn-matfac} concerns the equation $XY=Z$, where $X$, $Y$, and $Z$ are adjacency matrices of Cayley graphs on the same finite group $G$ \cite{HM}. Solving this equation matrix equation boils down to finding a factorization of subsets of $G$ \cite[Theorem 2.5]{HM}.

In Section \ref{sec-fundamentals}, we will discuss a matrix perspective of $(\la,\mu)$-fold factorizations of a group $G$, which leads to a factorization of the form \eqref{eqn-matfac} where $X$ and $Y$ are group matrices based on $G$ (group matrices will be defined in Definition \ref{def-groupmatrix}). In fact, an $\F[G,s,t,\la,\mu]$ generates a pair of $\{0,1\}$-matrices that give a representation of a binary \emph{$(n,\mu,\la)$-system on $s$ and $t$}, introduced in \cite[p. 435]{BR}.

Given the elegant connections with existing structures described above, we regard $(\la,\mu)$-fold factorizations of finite groups as natural objects for further study. In this paper, we study $(\la,\mu)$-fold factorizations of finite groups $G$ with $\la \ne \mu$, with a focus on finite abelian groups. Specifically, our contributions include the following:
\begin{enumerate}
\item {For $\la \ne \mu$, we prove that one factor of a  $(\la,\mu)$-fold factorization $(S,T)$ uniquely determines the other (Theorem \ref{thm-unique}). Moreover, knowing one of $S$ and $T$, as well as one of $\la$ and $\mu$, is sufficient to determine the $(\la,\mu)$-fold factorization $(S,T)$ up to complementation (Theorem \ref{thm-unique2}). These results are straightforward generalizations of previous results on $(\la,0)$-fold factorizations \cite[Theorems 1.4, 1.5, 1.6, 1.7]{KLS}.}
\item For every $(\la,\mu)$-fold factorization with $\la \ne \mu$, we establish the commutativity of the two factors (Corollary \ref{cor-commutativity}) by employing a fundamental result of Bridges and Ryser \cite{BR}.
\item After introducing the notion of symmetric factorizations, multipliers of factorizations, and equivalence of
factorizations, we derive a series of necessary and sufficient conditions for a $(\la,\mu)$-fold factorization of a finite abelian group to be translate-equivalent to a symmetric factorization (Theorem \ref{thm-transequivsym}), generalizing a result by de Caen et al.\ concerning $(1,0)$-fold factorizations \cite[Corollary 1]{CGHK}.  
\item We present a mixed-radix construction for $(\la,\la\pm1)$-fold factorizations of cyclic groups (Theorem \ref{thm-mixedradix}), which provides a flexible construction framework for such factorizations. {The mixed-radix construction is a generalization of previously known construction of $(1,2)$-fold and $(1,0)$-fold factorizations of cyclic groups.}
\item We present a product construction that yields a broad class of $(\la,\mu)$-fold factorizations of finite abelian groups (Section \ref{sec-prodcon}).
\item We derive several nonexistence results for $(\la,\mu)$-fold factorizations of finite abelian groups of even order (Section \ref{sec-nonexistence}).
\end{enumerate}

The rest of this paper is organized as follows. In Section~\ref{sec-gp}, we briefly introduce the basic concepts of group rings and character sums that will be used frequently throughout the paper. In Section~\ref{sec-fundamentals}, we establish several fundamental results concerning $(\la,\mu)$-fold factorizations, some of which are derived from a matrix perspective. In Section~\ref{sec-construction}, we present a powerful mixed-radix construction of $(\la,\la\pm1)$-fold factorizations of cyclic groups, as well as a product construction of $(\la,\mu)$-fold factorizations of finite abelian groups. In Section~\ref{sec-nonexistence}, we establish several nonexistence results for $(\la,\mu)$-fold factorizations of finite abelian groups of even order. 
Finally, Section \ref{summary.sec} is a brief summary.

\section{Group rings and character sums}
\label{sec-gp}

In this paper, we adopt  group ring notation, which provides a compact and convenient framework for describing $(\la,\mu)$-fold factorizations of finite groups. Moreover, we employ character sums in conjunction with group rings to extract information about these factorizations. A standard treatment of these topics can be found in \cite[Chapter~1]{Pott95}, while \cite{JL24} offers an introductory account. Below, we provide a brief overview of group rings and character sums.

Let $G$ be a finite multiplicative group with the identity element $e$. The group ring $\Z[G]$ is the $\Z$-module with a basis consisting of all the elements of $G$. Formally,
$$
\Z[G]=\left\{ \sum_{g \in G}a_gg : a_g \in \Z \right\}.
$$
If $X=\sum_{g\in G} a_gg \in \Z[G]$,  
then the $a_g$'s are called the \emph{coefficients} of $X$ and  the coefficient $a_g$ of $g$ in X is denoted by $[X]_g$. (The expression $\sum_{g\in G} a_gg$ is a formal sum of  basis elements of $G$.)

For a subset $S$ of $G$, we denote the group ring element $\sum_{g\in S}g$ by $S$. Therefore, a subset of $G$ is in one-to-one correspondence with a group ring element in $\Z[G]$ with all coefficients equal to either $0$ or $1$. As a ring, the additive and multiplicative operations in $\Z[G]$ are defined as follows. Suppose we have $A,B \in \Z [G]$ with 
$$
A = \sum_{g \in G} a_g g \quad \text{and} \quad B = \sum_{g \in G} b_g g.
$$
The \emph{sum} $A+B$ and the \emph{product} $AB$ are defined to be
\begin{equation}
\label{sum.eq}
A+B=\sum_{g \in G} a_gg + \sum_{g \in G} b_gg =\sum_{g \in G} (a_g+b_g) g
\end{equation}
and
\begin{equation}
\label{product.eq}
AB=\left( \sum_{g \in G} a_gg \right) \left(\sum_{g \in G} b_gg \right)=\sum_{g \in G} \sum_{h \in G}a_{gh^{-1}}b_hg.
\end{equation}
If $A$ and $B$ have non-negative integer coefficients, then they can be viewed as multisets. In this case, $A + B$ denotes the multiset union of $A$ and $B$. 

Moreover, for \[X=\sum_{g\in G} a_gg\in \mathbb{Z}[G],\] we define 
\begin{equation}
\label{inverse.eq}
X^{(-1)}=\sum_{g\in G} a_g g^{-1} \in \Z[G].\end{equation} 
If $X$ is a subset of $G$, then $X^{(-1)}$ denotes the subset of $G$ consisting of all the inverses of elements in $X$. Also, if $G$ is an abelian group, then the group ring $\Z[G]$ is a commutative ring.   

In (\ref{product.eq}) and (\ref{inverse.eq}), we assumed that the group $G$ was written multiplicatively.  However, a group operation might be written additively or multiplicatively.
If $G$ is an additive group, then we would modify the definition of $AB$ and $X^{(-1)}$ in the group ring by replacing products of group elements by sums of group elements, and by replacing inverses of group of elements by additive inverses, i.e., replace $g^{-1}$ by $-g$).

Thus, for an additive abelian group, (\ref{sum.eq}) is unchanged, while (\ref{product.eq}) and (\ref{inverse.eq}) are modified as follows:
$$
AB=\sum_{g \in G} \sum_{h \in G}a_{g-h}b_hg
$$
and 
$$
X^{(-1)}=\sum_{g\in G} a_g (-g).
$$

Suppose $(S,T)$ is a $(\la,\mu)$-fold factorization of $G$. From now on, we identify $S$ and $T$ with the corresponding group ring elements in $\Z[G]$ having $0$ and $1$ coefficients. Therefore $(S,T)$ is a $(\la,\mu)$-fold factorization of $G$ if and only if
\begin{equation}
\label{eqn-def}
ST= \la(G - e)+\mu e \text{\quad in $\zed[G]$}.
\end{equation}

To analyze the group ring equation \eqref{eqn-def}, character theory serves as an indispensable tool. Recall that for a finite multiplicative group $G$, its \emph{exponent}, written as $\exp(G)$, is defined to be the smallest positive integer $b$ fulfilling $g^b = 1_G$ for every $g \in G$. A \emph{character} $\chi$ of {an abelian group} $G$ is defined as a group homomorphism mapping $G$ to the multiplicative group of complex numbers with absolute value $1$. Supposing $G$ has exponent $\exp(G)$, evaluating $\chi$ on any $g \in G$ yields
$$
\chi(g)^{\exp(G)} = \chi(g^{\exp(G)}) = \chi(1_G) = 1.
$$
This demonstrates that every image $\chi(g)$ must be an $\exp(G)$-th root of unity. If $\zeta_{\exp(G)}$ represents a primitive $\exp(G)$-th root of unity, then the image of $\chi$ is contained within the cyclic multiplicative group $\pointy{\zeta_{\exp(G)}}$.

The set of all characters of $G$ forms a group $\wh{G}$ under the operation $(\chi_1 \circ \chi_2)(g) = \chi_1(g) \chi_2(g)$ for all $g \in G$. The identity element in $\wt{G}$ is the \emph{principal character} $\chi_0$, which maps all elements via $\chi_0(g)=1$ for $g\in G$. Any character in $\wt{G}$ distinct from $\chi_0$ is called a \emph{nonprincipal character}. Given $\chi \in \wh{G}$, denote the \emph{order} of $\chi$ by $\ord(\chi)$, which is the smallest positive integer $b$ such that 
$$
\chi^b=\underbrace{\chi \circ \chi \circ \cdots \circ \chi}_\text{$\chi$ occurs $b$ times}=\chi_0.
$$ 
{
If $G$ is a finite abelian group, we have the following explicit description of $\wt{G}$.
\begin{definition}\label{def-charcalc}
Let $G=\Z_{n_1}\times\Z_{n_2}\times\cdots\times\Z_{n_t}$ and let 
$n=\exp(G) = \lcm(n_1, n_2, \dots, n_t)$. 
For each $a=(a_1,a_2,\ldots,a_t) \in G$, 
let $\chi_a : G \rightarrow \pointy{\zeta_n}$ be the function given by 
$$
\chi_a(x) = (\zeta_{n_1}^{a_1})^{x_1} (\zeta_{n_2}^{a_2})^{x_2} \dots (\zeta_{n_t}^{a_t})^{x_t} \quad \text{for each $x=(x_1,x_2,\ldots,x_t) \in G$}. 
$$
\end{definition}
It is straightforward to check that  $\chi_a(x+y) = \chi_a(x) \chi_a(y)$ for all $a,x,y \in G$, and so the function $\chi_a$ is a character of $G$ for each $a \in G$. The character $\chi_a$ is the principal character if and only if $a=0$. Moreover, the set $\wh{G}=\{ \chi_a : a \in G \}$ comprises all $\prod_i n_i$  characters of~$G$, and the \emph{character group} $(\wh{G}, \circ)$ of $G$ is a group whose identity is~$\chi_{0}$, and the groups $G$ and $\wh{G}$ are isomorphic. 
}

For any element $X=\sum_{g\in G}a_gg\in \Z[G]$ and any character $\chi\in\wt{G}$, the \emph{character sum} $\chi(X)$ is given by
$$
\chi(X)=\sum_{g\in G} a_g \chi(g).
$$
Through this evaluation, $\chi$ canonically extends to a ring homomorphism from $\Z[G]$ into the ring of cyclotomic integers $\Z[\zeta_{\exp(G)}]$. Moreover, $$
\chi(X^{(-1)})=\chi \left( \sum_{g\in G}a_gg^{-1} \right)=\sum_{g\in G} a_g \chi(g^{-1})=\sum_{g\in G} a_g\ol{\chi(g)}=\ol{\sum_{g\in G} a_g \chi(g)} =\overline{\chi(X)}.
$$
For a finite abelian group $G$, the following Fourier inversion formula indicates that a group ring element $A \in \Z[G]$ is uniquely determined by its character sums. 

\begin{proposition}[Fourier inversion formula {\cite[Proposition 2.13, Corollary 2.14]{JL24}, \cite[Lemma 1.2.2]{Pott95}}] 
\label{prop-fourier}
Let $G$ be a finite abelian group and let $A= \sum_{g \in G} a_g g\in \Z[G]$. Then
$$
a_g=\frac{1}{|G|}\sum_{\chi \in \wh{G}} \chi(A)\chi(g^{-1}) \quad \text{for each $g \in G$}.
$$
Consequently, for $A, B \in \Z[G]$, $A = B$ if and only if $\chi(A)=\chi(B)$ for every character $\chi \in \wh{G}$.
\end{proposition}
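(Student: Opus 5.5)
The plan is to prove the Fourier inversion formula first, using the orthogonality relations for characters of a finite abelian group; the uniqueness statement then follows at once. The key preliminary fact is that for any $h \in G$,
$$
\sum_{\chi \in \wh{G}} \chi(h) = \begin{cases} |G| & \text{if } h = e,\\ 0 & \text{otherwise.}\end{cases}
$$
I will verify this with the explicit description of $\wh{G}$ in Definition \ref{def-charcalc}. Write $G=\Z_{n_1}\times\cdots\times\Z_{n_t}$ and $h=(h_1,\dots,h_t)$. Since $\wh{G}=\{\chi_a : a\in G\}$, the sum factors as
$$
\prod_{i=1}^{t}\sum_{a_i\in\Z_{n_i}}\zeta_{n_i}^{a_ih_i}.
$$
Each factor is a geometric sum. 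It equals $n_i$ when $h_i\equiv 0 \pmod{n_i}$, and $0$ otherwise, because then $\zeta_{n_i}^{h_i}\neq 1$ is an $n_i$-th root of unity. Hence the product is $|G|=\prod_i n_i$ exactly when $h=e$, and $0$ otherwise.

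Next comes the inversion formula itself. Fix $g\in G$ and expand $\chi(A)=\sum_{h} a_h\chi(h)$. Since $\chi$ is a homomorphism, $\chi(h)\chi(g^{-1})=\chi(hg^{-1})$. Interchanging the two finite sums gives
$$
\frac{1}{|G|}\sum_{\chi}\chi(A)\chi(g^{-1})=\frac{1}{|G|}\sum_{h\in G}a_h\sum_{\chi}\chi(hg^{-1}).
$$
By the orthogonality relation, the inner sum vanishes unless $h=g$, where it equals $|G|$. The right-hand side therefore collapses to $a_g$.

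Finally, the consequence. If $A=B$, then trivially $\chi(A)=\chi(B)$ for every $\chi$. Conversely, if $\chi(A)=\chi(B)$ for all $\chi\in\wh{G}$, the inversion formula writes each coefficient $a_g$ and $b_g$ as the same expression in these character values, so $a_g=b_g$ for all $g$ and hence $A=B$.

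The only substantive step is the orthogonality relation. The factorization into geometric sums handles it cleanly, because the paper has already identified $\wh{G}$ with $\{\chi_a\}$ via the decomposition of $G$ into cyclic factors. If one prefers to avoid coordinates, there is an alternative. For $h\ne e$, choose $\psi\in\wh{G}$ with $\psi(h)\neq 1$; such a $\psi$ exists by the same explicit description. Then multiplying the sum by $\psi(h)$ merely permutes the terms, since $\chi\mapsto\psi\circ\chi$ is a bijection of $\wh{G}$, so the sum must be $0$.
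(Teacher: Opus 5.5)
Your proof is correct and is the standard argument: you first establish the orthogonality relation that $\sum_{\chi\in\wh{G}}\chi(h)$ equals $|G|$ for $h=e$ and $0$ otherwise, using the explicit description $\wh{G}=\{\chi_a : a\in G\}$ (the paper's remark that these are all $\prod_i n_i$ characters makes $a\mapsto\chi_a$ a bijection, which justifies reindexing the sum over $G$), and then you interchange the two finite sums. The paper gives no proof of its own and only cites Pott and Jedwab--Li, whose proofs use this same orthogonality argument.
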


Let $N$ be a subgroup of a finite abelian group $G$. A character $\chi \in \wh{G}$ is \emph{principal} on $N$ if $\chi(g)=1$ for each $g \in N$. Define 
$$
N^\perp=\{ \chi \in \wh{G} : \text{$\chi$ is principal on $N$} \}.
$$
Let $\pi: G \rightarrow G/N$ be the canonical projection. Each character $\chi \in N^\perp$ induces a character $\wti{\chi}$ in $\wh{G/N}$ such that $\chi=\wti{\chi} \circ \pi$. Specifically,
\begin{equation}
\label{eqn-indchar}
\wti{\chi}(gN) = \chi(g) \quad \text{for all $g \in G$}.
\end{equation}
Moreover, the correspondence between $\chi$ and $\wti{\chi}$ induces a bijection between $N^\perp$ and $\wh{G/N}$ \cite[Result 2.10]{JL24}. 

\section{Fundamentals of $(\la,\mu)$-fold factorizations of groups}
\label{sec-fundamentals}

In this section, we describe some fundamental results concerning $(\la,\mu)$-fold factorizations of groups. Some of the results in Section~\ref{sec-fundamentals} are natural generalizations of results in \cite[Sections 1,2]{CGHK}, \cite[Section 2]{KLS}, and \cite[Section 1]{KMS} . 

\subsection{Restricting the range of parameter sets}

In this subsection, we derive some restrictions on the range of parameter sets for $\F[G,s,t,\la,\mu]$. To this end, we identify equivalent parameter sets and exclude certain parameter sets that boil down to existing structures that have already been well studied. 

For an $\F[G,s,t,\la,\mu]$, the following lemma, which is a natural extension of \cite[Lemma 1.1]{KLS}, indicates that the ordering of $s$ and $t$ within the parameter set does not matter. Specifically, $(S,T)$ is a $(\la,\mu)$-fold factorization of $G$  if and only if $(T^{(-1)},S^{(-1)})$  is a $(\la,\mu)$-fold factorization of $G$.

\begin{lemma}[Reflection of $(\la,\mu)$-fold factorizations]
\label{lem-STorder}
$(S,T)$ is an $\F[G,s,t,\la,\mu]$ if and only if $(T^{(-1)},S^{(-1)})$ is an $\F[G,t,s,\la,\mu]$.
\end{lemma}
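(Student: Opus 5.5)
The plan is to work entirely in the group ring $\Z[G]$ and apply the inversion map $X \mapsto X^{(-1)}$ to the defining equation \eqref{eqn-def}. This map is the $\Z$-linear extension of $g\mapsto g^{-1}$. Because $(gh)^{-1}=h^{-1}g^{-1}$, it is an anti-automorphism of $\Z[G]$: $(AB)^{(-1)}=B^{(-1)}A^{(-1)}$ for all $A,B\in\Z[G]$. By linearity it suffices to verify this on basis elements, where it is immediate. The map is also an involution, $(X^{(-1)})^{(-1)}=X$.

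With that in hand, suppose $(S,T)$ is an $\F[G,s,t,\la,\mu]$, so $ST=\la(G-e)+\mu e$. Applying the inversion map gives
$$T^{(-1)}S^{(-1)}=(ST)^{(-1)}=\la\bigl(G^{(-1)}-e^{(-1)}\bigr)+\mu e^{(-1)}=\la(G-e)+\mu e,$$
since inversion permutes $G$ and fixes $e$. The elements $T^{(-1)}$ and $S^{(-1)}$ are again $\{0,1\}$-coefficient elements, corresponding to subsets of sizes $t$ and $s$, because inversion is a bijection on $G$. Hence $(T^{(-1)},S^{(-1)})$ is an $\F[G,t,s,\la,\mu]$.

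The converse follows by applying the same argument to $(T^{(-1)},S^{(-1)})$ and using the involution property, which recovers $(S,T)$. Alternatively, one can avoid group rings altogether: the map $(x,y)\mapsto(y^{-1},x^{-1})$ is a bijection from $\{(x,y)\in S\times T: xy=g\}$ to $\{(u,v)\in T^{(-1)}\times S^{(-1)}: uv=g^{-1}\}$. The count for $g$ is therefore the count for $g^{-1}$, and $g\ne e$ if and only if $g^{-1}\ne e$.

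There is no real obstacle here. The only point needing care is that $G$ may be nonabelian, so the order of the factors must reverse; that reversal is exactly what the lemma's statement reflects.
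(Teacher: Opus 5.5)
Your proof is correct and takes the same approach as the paper: apply the inversion anti-automorphism of $\mathbb{Z}[G]$ to $ST=\lambda(G-e)+\mu e$ to obtain $T^{(-1)}S^{(-1)}=\lambda(G-e)+\mu e$, and reverse the step for the converse. Your additional details (the anti-automorphism and involution properties, preservation of the factor sizes, and the alternative bijection $(x,y)\mapsto(y^{-1},x^{-1})$) are all valid; they just spell out what the paper's one-line proof leaves implicit.
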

\begin{proof}
In $\zed[G]$, $ST=\la (G-e)+\mu e$ if and only if  $T^{(-1)}S^{(-1)} = \la(G-e)+\mu e$.
\end{proof}

Let $(S,T)$ be a $(\la,\mu)$-fold factorization of a group $G$. A helpful observation is that, by taking complements of $S$ and/or $T$, {we obtain a collection of four interrelated factorizations of $G$.} The proofs, which are straightforward, are omitted.

\begin{lemma}[Complements of $(\la,\mu)$-fold factorizations]
\label{lem-complement}
{The following statements 
are equivalent:}
\begin{enumerate}
\item $(S,T)$ is an $\F[G,s,t,\la,\mu]$.
\item $(G \sm S,T)$ is an $\F[G,n-s,t,t-\la,t-\mu]$.
\item $(S,G \sm T)$ is an $\F[G,s,n-t,s-\la,s-\mu]$.
\item $(G \sm S,G \sm T)$ is an $\F[G,n-s,n-t,n-s-t+\la,n-s-t+\mu]$.
\end{enumerate}
\end{lemma}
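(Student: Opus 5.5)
The plan is to work entirely in the group ring $\zed[G]$ and use the characterization \eqref{eqn-def}: $(S,T)$ is a $(\la,\mu)$-fold factorization if and only if $ST=\la(G-e)+\mu e$. The only additional facts needed are the elementary identities
\[
(G-S)T = GT - ST = tG - ST, \qquad S(G-T) = sG - ST,
\]
which hold because $gG=Gg=G$ for every $g\in G$. Hence $GT=|T|\,G=tG$ and $SG=sG$ for subsets $S,T$, where $G\sm S$ is identified with the group ring element $G-S$. Since $G=(G-e)+e$, every equation below can be written in the normal form $a(G-e)+be$. Reading off the two coefficients then gives the parameters.

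First I show (1)$\Leftrightarrow$(2). Assuming $ST=\la(G-e)+\mu e$, we get
\[
(G-S)T = t(G-e)+te - \la(G-e)-\mu e = (t-\la)(G-e)+(t-\mu)e,
\]
and $|G\sm S|=n-s$. This proves (2). Conversely, if $(G-S)T=(t-\la)(G-e)+(t-\mu)e$, the same identity solved for $ST$ gives back (1). I would emphasize that the coefficients of $G-e$ and of $e$ are independent in $\zed[G]$ when $n\ge 2$, so the equivalence of the equations really is the equivalence of the factorization conditions. The argument for (1)$\Leftrightarrow$(3) is identical, using $S(G-T)=sG-ST$, and yields parameters $(s-\la,s-\mu)$ and size $(s,n-t)$.

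Finally, (3)$\Leftrightarrow$(4) follows by applying the already-proved (1)$\Leftrightarrow$(2) to the factorization $(S,G\sm T)$, which has parameters $(s,n-t,s-\la,s-\mu)$. Complementing the first factor replaces $\la'$ by $(n-t)-\la'$, giving
\[
(n-t)-(s-\la)=n-s-t+\la,
\]
and likewise $n-s-t+\mu$ for the identity coefficient. This is exactly (4). Chaining the equivalences gives all four statements equivalent.

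There is no real obstacle here. The only points needing care are bookkeeping: using $|T|$ rather than $|G\sm T|$ when complementing the first factor after the second has already been complemented, and keeping track of left versus right multiplication by $G$, which is harmless because $G$ is central in $\zed[G]$.
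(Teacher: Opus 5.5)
Your proof is correct. It is exactly the routine group-ring verification the paper has in mind when it omits the proof as straightforward: $(G-S)T=tG-ST$ and $S(G-T)=sG-ST$. This is also the form in which the lemma is used later, e.g.\ $S_i\overline{T_i}=(s_i-\lambda_i)G_i-\delta_i e_i$ in the product construction.

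One small wording slip: in (3)$\Leftrightarrow$(4) the size that enters is $|G\setminus T|=n-t$. Your computation $(n-t)-(s-\lambda)$ uses it correctly, but your closing remark about ``using $|T|$ rather than $|G\setminus T|$'' reads as if the roles were reversed.
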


Suppose $(S,T)$ is an  $\F[G,s,t,\la,\mu]$. In view of Lemma \ref{lem-STorder} and Lemma~\ref{lem-complement}, by taking reflections and complements if necessary, we can assume that $1 \le s \le t \le \frac{n}{2}$. 

Note that the parameter sets of each of the four equivalent factorizations in Lemma \ref{lem-complement} must all be nonnegative, so we have $0 \le \mu \le s$. If $\mu=0$, then $(S,T)$ is a $\la$-fold near-factorization of $G$. Similarly, if  $\mu=s$, then $(S,G \sm T)$ is an $(s-\la)$-fold near-factorization of $G$. We refer to \cite{KLS} for an in-depth investigation of near-factorizations and do not consider them in this paper. However, we note that $(\la,\mu)$-fold factorizations of $G$ of size $(s,t)$ with $\mu=s$ are closely related to \emph{difference sets} \cite[Chapter VI]{BJL} and \emph{partial difference sets} \cite{Ma} in $G$. Specifically, combining Lemma \ref{lem-complement} and \cite[Corollaries 2.3, 2.10]{KLS}, we have
\begin{align*}
 \F[G,s,s,\la,s] \text{ exists} & \Leftrightarrow  \F[G,s,n-s,s-\la,0] \text{ exists} \\
                                                  & \Leftrightarrow \text{an $(n,s,\la)$-difference set in $G$ exists, and }                                          \\     \\
 \F[G,s,s+1,\la,s]  \text{ exists} & \Leftrightarrow  \F[G,s,n-s-1,s-\la,0]  \text{ exists} \\ 
                                                    &\Leftrightarrow \text{an $(n,s,\la-1,\la)$-partial difference set in $G$ exists}.
\end{align*}

Moreover, the case $\la=\mu$ is excluded in the current paper, as it corresponds to the well-studied theory of factorizations of finite groups \cite{Szabo04,SS}. Therefore the values of $\mu$ we consider are restricted to $1 \le \mu \le s-1$ and $\mu \ne \la$.

Motivated by the above discussion, for a group $G$ of order $n$, we define a \emph{basic parameter set} to be a tuple $(s,t,\la,\mu)$ that satisfies the following conditions:
\begin{equation}
\label{eqn-basic}
\begin{split}
&st = \la (n-1) + \mu\\
& 1 \le s \le t \le \frac{n}{2}\\
& 1 \le \mu \le s-1 \\
& \la \ne \mu .
\end{split}
\end{equation}
The fundamental problem we study in this paper is the following:  

\begin{quote}
Given a group $G$ of order $n$ and a basic parameter set $(s,t,\la,\mu)$, determine if there is an 
$\F[G,s,t,\la,\mu]$, and if so, classify all $\F[G,s,t,\la,\mu]$ up to equivalence (equivalence of factorizations is defined in Section \ref{equiv.sec}).
\end{quote}

 \bigskip

We have a few additional observations regarding (basic) parameter sets.

\begin{remark} 
\label{rem-basic}
\text{\quad}\vspace{-.2in}\\
\begin{enumerate}
\item It is not difficult to prove that $s \geq 2$ in any basic parameter set. But we do not need to include this restriction as part of the definition.
\item Upper and lower bounds on $\mu$ can be transformed into equivalent upper and lower bounds on $\la$  by using the fundamental equation (\ref{eqn-counting}). We have
$$ 
\la = \frac{st - \mu}{n-1} \quad \text{and} \quad 1 \leq \mu \leq s-1,
$$
so it immediately follows that 
\begin{equation}
\label{lamineq} 
\frac{st-s+1}{n-1} \le \la \le \frac{st-1}{n-1}.
\end{equation}
Of course we do not need to include the inequality \eqref{lamineq} in the equations \eqref{eqn-basic}.
\item Using inequality \eqref{lamineq} and the equations \eqref{eqn-basic}, it is not difficult to show that $\la \leq s-1$. 
\end{enumerate}
\end{remark}

\subsection{A matrix perspective of $(\la,\mu)$-fold factorizations}

In this subsection, we describe a matrix perspective of $(\la,\mu)$-fold factorizations of a (multiplicative) group $G$, establishing the connection of group factorizations to the work of Bridges and Ryser, especially, binary $(n,\mu,\la)$-systems \cite{BR}. For this purpose, we recall the definition of group matrix.

\begin{definition}[group matrix]
\label{def-groupmatrix}
{Let $G$ be a finite multiplicative group of order $n$ with elements $\{g_1, g_2, \ldots,g_n \}$.} An $n \times n$ matrix $M$ with rows and columns indexed by the elements of $G$ is a \emph{group matrix} based on $G$ if for any $g,x,y \in G$, we have $M_{gx,gy}=M_{x,y}$. Equivalently, the $(x,y)$ entry of $M$ is solely determined by the quotient $x^{-1}y$.  
\end{definition}

{It is easy to see that a group matrix corresponding to a cyclic group is a circulant matrix. Moreover, if $g_1 = e$ is the identity element of $G$, then a group matrix $M$ satisfies $M_{g,h}=M_{h^{-1}g,e}$ for all $g,h \in G$. Thus, a group matrix is determined by its first column.} 

For any subset $A \subseteq G$, define an $n \times n$ matrix $M(A)$ as follows:
\begin{equation}
\label{eqn-mat}
\big(M(A)\big)_{g,h} = \begin{cases}
1 & \text{if $g^{-1}h \in A$}\\
0 & \text{otherwise}.
\end{cases}
\end{equation}
Note that, once the labelling of the group elements $g_1, g_2, \ldots , g_n$ of $G$ is fixed, there is a one-to-one correspondence between a subset $A \subseteq G$ and its associated matrix $M(A)$. Below, when we relate the subset $A \subseteq G$ and the matrix $M(A)$, we always assume the underlying labelling of the group elements $g_1, g_2, \ldots , g_n$ of $G$ has been specified, with the convention that $g_1=e$. Throughout this paper, we always use $I$ to denote the $n \times n$ identity matrix and $J$ to denote the $n \times n$ all-ones matrix. 

Clearly $M(A)$ has constant row and column sums equal to $|A|$. Equivalently,
$$
M(A)J=JM(A)=|A|J.
$$
Moreover, {because $g^{-1} h \in A$ if and only if $h^{-1} g \in A^{(-1)}$}, we have
$$
M(A^{(-1)})=M(A)^T.
$$
Under this subset-matrix correspondence,  equation~(\ref{eqn-def}) can be recast into a matrix equation. 
The following theorem, whose proof is omitted, is a natural extension of \cite[p .54]{CGHK}, \cite[Theorem 1.2]{KLS}, and \cite[Theorem 1.3]{KMS}.

\begin{theorem}
\label{thm-matrixrep}
Suppose $S,T \subseteq G$. Then $(S,T)$ is a $(\la,\mu)$-fold factorization of $G$ if and only if 
\begin{equation}
\label{eqn-matrix}
M(S)M(T)=\la (J-I) + \mu I.
\end{equation}
\end{theorem}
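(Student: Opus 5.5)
The plan is to compute the entries of the product $M(S)M(T)$ directly and compare them with the coefficients of $ST$ in $\zed[G]$. The identity \eqref{eqn-def} says precisely that $[ST]_g=\la$ for $g\ne e$ and $[ST]_e=\mu$. The right-hand side $\la(J-I)+\mu I$ is the matrix whose $(g,h)$ entry is $\la$ if $g\ne h$ and $\mu$ if $g=h$, that is, $\la$ or $\mu$ according as $g^{-1}h\ne e$ or $g^{-1}h=e$. So it suffices to prove the entrywise identity
\[
\big(M(S)M(T)\big)_{g,h} = [ST]_{g^{-1}h} \quad \text{for all } g,h\in G ,
\]
since then $M(S)M(T)$ equals $\la(J-I)+\mu I$ if and only if $[ST]_x$ takes the prescribed values for every $x\in G$. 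Here we use that $x=g^{-1}h$ ranges over all of $G$, for instance by taking $g=e$.

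To establish the identity, I will expand the matrix product:
\[
\big(M(S)M(T)\big)_{g,h}=\sum_{k\in G} M(S)_{g,k}M(T)_{k,h}=\big|\{k\in G : g^{-1}k\in S,\ k^{-1}h\in T\}\big|.
\]
Next I substitute $x=g^{-1}k$, which is a bijection $k\mapsto x$ from $G$ to $G$. Then $k^{-1}h=x^{-1}g^{-1}h$, so the count becomes the number of $x\in S$ with $x^{-1}(g^{-1}h)\in T$. This equals the number of pairs $(x,y)\in S\times T$ with $xy=g^{-1}h$, which is exactly $[ST]_{g^{-1}h}$ by the product formula \eqref{product.eq}.

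The only point needing care, and it is minor, is the order of multiplication in the nonabelian case. The convention $M(A)_{g,h}=1$ iff $g^{-1}h\in A$ must produce $xy$ with $x\in S$ on the left. The substitution above confirms that it does: $x$ comes from the first factor $M(S)$ and $y=x^{-1}g^{-1}h$ from the second factor $M(T)$. Both implications of the ``if and only if'' then follow at once from the entrywise identity.
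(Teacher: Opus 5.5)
Your proof is correct and complete. The entrywise identity $\bigl(M(S)M(T)\bigr)_{g,h}=[ST]_{g^{-1}h}$, obtained via the substitution $x=g^{-1}k$, combined with the fact that $g^{-1}h$ ranges over all of $G$, gives both directions. The paper omits this proof as routine, and your direct computation is the standard argument it refers to.
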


Note that $\det(M(S)M(T)) =\det(\la (J-I) + \mu I) = (\mu-\la)^{n-1}\big(\la n+\mu-\la \big)$, as the eigenvalues of $J$ attain the value $0$  $n-1$ times and the value $n$  once. Crucially, the  $\det(M(S)M(T))\neq 0$  if and only if $\la \neq \mu$. This fact distinguishes $(\la,\mu)$-fold factorizations of $G$ into two intrinsically distinct categories, having $\la \ne \mu$ and $\la=\mu$, respectively. Below we will see that these two categories are essentially different. As a preparation, we have the following lemma, which is analogous to \cite[Theorem 1.3]{KLS}; the proof is omitted.

\begin{lemma}
\label{lem-matrixinv}
Let $\la$ and $\mu$ be two integers with $\la \ne \mu$. Let $\la(J - I)+ \mu I = XY$ be a factorization into two $n \times n$ (necessarily invertible) matrices $X$ and $Y$, where $\la(n-1)+\mu \ne 0$. 
\begin{enumerate}
\item Suppose that $XJ = rJ$ for some nonzero $r$. Then
$$
 Y = \frac{\la}{r} J + (\mu-\la) X^{-1}.
$$
\item Suppose that $JY = rJ$ for some nonzero $r$. Then
$$
 X = \frac{\la}{r} J + (\mu-\la) Y^{-1}.
$$
\end{enumerate}
\end{lemma}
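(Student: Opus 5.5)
We first establish that $XY=\la(J-I)+\mu I$ is invertible, and then compute the inverse of $X$ or $Y$ directly from this equation. Write $A=\la(J-I)+\mu I=(\mu-\la)I+\la J$. As already noted after Theorem~\ref{thm-matrixrep}, $J$ has eigenvalue $n$ once and $0$ with multiplicity $n-1$. Hence $A$ has eigenvalues $\la n+\mu-\la=\la(n-1)+\mu$ (once) and $\mu-\la$ (with multiplicity $n-1$). Both are nonzero by hypothesis, so $A$ is invertible, and therefore $X$ and $Y$ are invertible as well.

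For part 1, assume $XJ=rJ$ with $r\ne 0$. Since $X$ is invertible, we get $J=rX^{-1}J$, that is, $X^{-1}J=\frac{1}{r}J$. From $XY=(\mu-\la)I+\la J$ we obtain
\[
Y=X^{-1}\bigl((\mu-\la)I+\la J\bigr)=(\mu-\la)X^{-1}+\la X^{-1}J=(\mu-\la)X^{-1}+\frac{\la}{r}J,
\]
which is exactly the claimed formula.

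Part 2 is symmetric. From $JY=rJ$ we get $JY^{-1}=\frac1r J$. Multiplying $XY=A$ on the right by $Y^{-1}$ gives
\[
X=\bigl((\mu-\la)I+\la J\bigr)Y^{-1}=(\mu-\la)Y^{-1}+\frac{\la}{r}J.
\]

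There is no real obstacle here. The only points needing care are to use the two nonvanishing hypotheses to justify invertibility before manipulating inverses, and to note that $r\neq0$ is what allows $X^{-1}J=\frac1rJ$ (respectively $JY^{-1}=\frac1rJ$) to be derived. The condition $\la(n-1)+\mu\ne0$ is needed only for invertibility, through the eigenvalue $\la(n-1)+\mu$.
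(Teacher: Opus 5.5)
Your proof is correct and takes essentially the approach the paper intends. The paper omits the proof, calling it analogous to \cite[Theorem 1.3]{KLS}; the standard argument is yours: $X$ and $Y$ are invertible because $\det\bigl(\la(J-I)+\mu I\bigr)=(\mu-\la)^{n-1}(\la n+\mu-\la)\neq 0$, and then $Y=X^{-1}\bigl(\la J+(\mu-\la)I\bigr)$ with $X^{-1}J=\frac{1}{r}J$, symmetrically for part 2.
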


Employing Lemma \ref{lem-matrixinv}, it is easy to see that, for each $(\la,\mu)$-fold factorization of the group $G$ with $\la \ne \mu$, one subset uniquely determines the other. The following theorem generalizes \cite[Theorems 1.4 and 1.5]{KLS} and \cite[Theorem 1.5]{KMS}.

\begin{theorem} Let $G$ be a finite group and $\la \ne \mu$.
\label{thm-unique}
\begin{enumerate}
\item Suppose $(S,T)$ and $(S',T)$ are both $(\la,\mu)$-fold factorizations of $G$. Then $S = S'$.
\item Suppose $(S,T)$ and $(S,T')$ are both $(\la,\mu)$-fold factorizations of $G$. Then $T = T'$.
\end{enumerate}
\end{theorem}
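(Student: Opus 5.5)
Via Theorem~\ref{thm-matrixrep}, I will translate both hypotheses into matrix equations and then apply Lemma~\ref{lem-matrixinv}. For part~1, suppose $(S,T)$ and $(S',T)$ are both $(\la,\mu)$-fold factorizations of $G$. Then
\[
M(S)M(T)=\la(J-I)+\mu I=M(S')M(T).
\]
Since $\la\ne\mu$, the determinant computation preceding Lemma~\ref{lem-matrixinv} shows that the right-hand side is invertible, provided $\la(n-1)+\mu\neq 0$. In particular $M(T)$ is invertible. Multiplying both equations on the right by $M(T)^{-1}$ gives $M(S)=M(S')$. Since $A\mapsto M(A)$ is injective (the first row of $M(A)$ is the indicator vector of $A$, because $g_1=e$), we conclude $S=S'$.

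Alternatively, part~1 follows directly from Lemma~\ref{lem-matrixinv}(2). Take $Y=M(T)$, which satisfies $JY=|T|J$, and write $t=|T|$. Then
\[
M(S)=\frac{\la}{t}J+(\mu-\la)M(T)^{-1},
\]
and the right-hand side depends only on $T$. Part~2 is symmetric. Either cancel $M(S)$ on the left using invertibility, or use Lemma~\ref{lem-matrixinv}(1) with $X=M(S)$ and $XJ=|S|J$, which expresses $M(T)$ purely in terms of $S$. One could also reduce part~2 to part~1 via the reflection in Lemma~\ref{lem-STorder}: the factorizations $(T^{(-1)},S^{(-1)})$ and $(T'^{(-1)},S^{(-1)})$ share their second factor.

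The only care needed is at the degenerate points, and this is where the main (minor) obstacle lies. We need $\la(n-1)+\mu\ne0$. Since $\la,\mu\ge0$ and $\la\ne\mu$, this quantity can vanish only if $\la=\mu=0$, which is excluded. So $\la(n-1)+\mu=st>0$, which also gives $s,t\ge1$ and hence $r\ne0$ in Lemma~\ref{lem-matrixinv}. The determinant $(\mu-\la)^{n-1}(\la(n-1)+\mu)$ is therefore nonzero, and the cancellation argument is valid. Everything else is a direct consequence of the earlier results.
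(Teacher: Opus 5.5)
Your proof is correct and takes the route the paper indicates; the paper gives no separate proof and only remarks that the result follows from Lemma~\ref{lem-matrixinv}. You translate to $M(S)M(T)=\la(J-I)+\mu I$ via Theorem~\ref{thm-matrixrep} and cancel using invertibility, and your checks that $r\neq 0$ and that $A\mapsto M(A)$ is injective fill in details the paper omits. One small caveat is that your claim that $\la(n-1)+\mu$ vanishes only when $\la=\mu=0$ tacitly assumes $n\ge 2$: in the trivial group, $(\emptyset,\emptyset)$ and $(\{e\},\emptyset)$ are both $(1,0)$-fold factorizations, so the statement itself needs $n\ge 2$.
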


\begin{remark}
\label{rem-nonunique}
Suppose $(S,T)$ is a $(\la,\la)$-fold factorization of the group $G$. Then the uniqueness result in Theorem~\ref{thm-unique} does not hold. For instance, suppose that $S$ is a nontrivial subgroup of $G$ having index $r>1$. Let $T$ be any complete set of right coset representatives of $S$ in $G$. Then $ST = G$ and hence $(S,T)$ is a $(1,1)$-fold factorization of the group $G$.
This shows that $T$ is not uniquely determined by $S$, so Theorem \ref{thm-unique} does not hold.
\end{remark}

We record the following fundamental result by Bridges and Ryser, which is a restatement of \cite[Theorem 3.1]{BR} and the last paragraph preceding \cite[Corollary 3.2]{BR}. 

\begin{theorem}
\label{thm-BR}
Let $\la$ and $\mu$ be integers with $\la \notin \{0,\mu\}$. Let $X$ and $Y$ be nonnegative integral matrices of order $n > 1$ such that 
$$
XY=\la (J-I)+\mu I.
$$
Suppose $X$ has constant row and column sum $s$ and $Y$ has constant row and column sum $t$. Then $st=\la(n-1)+\mu$ and 
$$
YX=XY=\la (J-I)+\mu I.
$$
\end{theorem}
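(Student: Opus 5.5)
The approach is direct: compute with the all-ones matrix $J$ to get the counting relation, show that $X$ is invertible, and then use the explicit form of $X^{-1}Y$-type identities from Lemma~\ref{lem-matrixinv} to reverse the order of the product. The hypotheses give $XJ=JX=sJ$ and $YJ=JY=tJ$.

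\emph{Step 1: the counting relation.} I would multiply $XY=\la(J-I)+\mu I$ on the right by $J$. The left side is $X(YJ)=tXJ=stJ$. The right side is $\la(J^2-J)+\mu J=(\la(n-1)+\mu)J$, since $J^2=nJ$. Comparing entries gives $st=\la(n-1)+\mu$.

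\emph{Step 2: $s,t\neq 0$ and invertibility.} If $s=0$, then $X$ is nonnegative with zero row sums, so $X=0$. Then $XY=0$, but since $n>1$ the off-diagonal entries of the right-hand side equal $\la\neq 0$, which is a contradiction. The same argument gives $t\neq 0$. As noted before Lemma~\ref{lem-matrixinv},
$$
\det(\la(J-I)+\mu I)=(\mu-\la)^{n-1}\bigl(\la(n-1)+\mu\bigr)=(\mu-\la)^{n-1}st .
$$
This is nonzero because $\la\neq\mu$ and $st\neq 0$. Hence $X$ and $Y$ are both invertible.

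\emph{Step 3: reversing the product.} Because $XJ=sJ$ with $s\neq 0$, Lemma~\ref{lem-matrixinv}(1) applies (it can also be checked directly from $X^{-1}J=\tfrac1s J$). It gives
$$
Y=\frac{\la}{s}J+(\mu-\la)X^{-1}.
$$
Multiplying on the right by $X$ and using the column-sum condition $JX=sJ$ yields
$$
YX=\frac{\la}{s}\,sJ+(\mu-\la)I=\la(J-I)+\mu I=XY.
$$

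I do not expect a serious obstacle. The only delicate point is Step 2: the hypothesis $\la\neq 0$, together with nonnegativity and $n>1$, is exactly what rules out $s=0$ or $t=0$. This in turn secures both the invertibility of $X$ and the division by $s$ in Step 3. Note also where each sum condition enters: the row sums of $Y$ are used in Step 1, and both row and column sums of $X$ are used in Step 3.
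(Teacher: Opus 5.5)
Your proof is correct. It differs from the paper only in that the paper gives no proof: it imports the statement as a black box from Bridges and Ryser (their Theorem~3.1 together with a remark preceding their Corollary~3.2), where it sits inside their general treatment of the equation $XY=\la(J-I)+\mu I$.

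Your argument shows that nothing from that theory is needed once constant row and column sums are assumed outright. This assumption holds automatically for the group matrices $M(S)$ and $M(T)$ to which the paper applies the result. Multiplying by $J$ gives $st=\la(n-1)+\mu$. The determinant $(\mu-\la)^{n-1}st$ gives invertibility. The inverse formula of Lemma~\ref{lem-matrixinv} then gives commutation. Equivalently, $YX=X^{-1}(XY)X=\la X^{-1}JX+(\mu-\la)I$, and $X^{-1}JX=J$ because $XJ=JX$.

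Your route gives a self-contained proof, so Corollary~\ref{cor-commutativity} and Theorem~\ref{thm-unique2} no longer depend on the external reference. It also shows how little of the hypothesis is used:
\begin{itemize}
\item integrality of the entries plays no role;
\item nonnegativity (together with $\la\neq 0$) serves only to exclude $s=0$ or $t=0$, and any hypothesis guaranteeing $\la(n-1)+\mu\neq 0$ would serve equally well;
\item the column sums of $Y$ are never used.
\end{itemize}
What the citation offers instead is the link to Bridges and Ryser's broader framework of binary $(n,\mu,\la)$-systems.
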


Theorem~\ref{thm-unique} indicates that, if we specify $G$, $\la$, and $\mu$, then one factor of a $(\la,\mu)$-fold factorization of $G$ uniquely determines the other. This result can be further strengthened as follows, which can be viewed as an extension of \cite[Theorems 1.6, 1.7]{KLS}, which concerned the case  $\mu=\mu'=0$.

{
\begin{theorem}
\label{thm-unique2}
Suppose $(S,T)$ is a $(\la,\mu)$-fold factorization of $G$ and
$(S',T)$ is a $(\la',\mu')$-fold factorization of $G$, where $S \neq S'$, $\la \neq \mu$ and $\la' \neq \mu'$. Suppose $1 \leq s,s' \leq |G|-1$, where $s = |S|$ and $s' = |S'|$.
Then $S' = G \sm S$, $\la' = t - \la$ and $\mu' = t - \mu$, where $|T| = t$.
\end{theorem}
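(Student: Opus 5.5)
We work with the group matrices of Theorem~\ref{thm-matrixrep}. Write $X=M(S)$, $X'=M(S')$ and $Y=M(T)$, so that
\[
XY=\la(J-I)+\mu I \quad\text{and}\quad X'Y=\la'(J-I)+\mu' I.
\]
Since $\la\ne\mu$, the determinant computation following Theorem~\ref{thm-matrixrep} shows that $XY$ is invertible unless $\la(n-1)+\mu=0$. But $\la(n-1)+\mu=st$, and $s,t\ge1$; note $t\ge1$ holds, since otherwise $ST=0$ would force $\la=\mu=0$. So $Y$ is invertible. Also $JY=tJ$, and eigenvalue considerations give $t\ne0$.

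Apply part~2 of Lemma~\ref{lem-matrixinv} to both factorizations, noting that $s't=\la'(n-1)+\mu'\ne0$ as well. This gives
\[
X=\frac{\la}{t}J+(\mu-\la)Y^{-1}, \qquad X'=\frac{\la'}{t}J+(\mu'-\la')Y^{-1}.
\]
Eliminate $Y^{-1}$: with $c=(\mu'-\la')/(\mu-\la)\neq 0$ we get
\[
X'=cX+\frac{\la'-c\la}{t}J=cX+dJ.
\]
Thus every entry of $X'$ is an affine function of the corresponding entry of $X$.

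Now use that both $X$ and $X'$ are $\{0,1\}$-matrices. Since $1\le s\le n-1$, $X$ has both $0$ and $1$ entries in its first column (indexed by $S$). Hence $d\in\{0,1\}$ and $c+d\in\{0,1\}$, with $c\ne0$. This leaves two cases:
\begin{itemize}
\item $d=0$, $c=1$, which gives $X'=X$, so $S'=S$, contradicting $S\ne S'$;
\item $d=1$, $c=-1$, which gives $X'=J-X$, i.e.\ $S'=G\sm S$.
\end{itemize}

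Finally, Lemma~\ref{lem-complement}(2) shows that $(G\sm S,T)$ is an $\F[G,n-s,t,t-\la,t-\mu]$. Comparing with the identity element coefficient and a nonidentity coefficient of $S'T$ gives $\la'=t-\la$ and $\mu'=t-\mu$.

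The main obstacle is the bookkeeping that justifies invoking Lemma~\ref{lem-matrixinv} for both factorizations: nonvanishing of $t$ and of $\la(n-1)+\mu$, $\la'(n-1)+\mu'$. These follow from $s,s',t\ge1$ via the counting relation \eqref{eqn-counting}. The rest is the simple entrywise $\{0,1\}$ argument.
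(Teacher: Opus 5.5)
Your proof is correct, but it takes a different route from the paper's. The paper evaluates $M(S)M(T)M(S')$ in two ways. That needs the commutation $M(T)M(S')=M(S')M(T)$, which the paper gets from the Bridges--Ryser theorem (Theorem~\ref{thm-BR}). The result is $(\mu'-\la')M(S)-(\mu-\la)M(S')=(\la s'-\la' s)J$, and the paper finishes with a case analysis over four possible ``types'' of entries of the left-hand side. You instead invert $M(T)$ and use $JM(T)=tJ$ via Lemma~\ref{lem-matrixinv}, writing both $M(S)$ and $M(S')$ as combinations of $J$ and $M(T)^{-1}$, and then eliminate $M(T)^{-1}$. Multiplying your relation by $\mu-\la$ recovers the paper's relation exactly, since $t(\la s'-\la' s)=\la\mu'-\la'\mu$. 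Your ending is tidier than the type analysis: an affine map $x\mapsto cx+d$ with $c\neq 0$ that sends both values $0,1$ taken by $M(S)$ into $\{0,1\}$ must be the identity or $x\mapsto 1-x$. What your route buys is self-containment, since no commutativity theorem is needed. Theorem~\ref{thm-BR} as stated assumes $\la'\neq 0$, which is not among the hypotheses here and can actually occur (e.g.\ $T=\{b\}$, $S=G\sm\{b^{-1}\}$, $S'=\{b^{-1}\}$). That case is harmless for the paper, since then $M(S')M(T)=\mu' I$ and commutation is trivial, but your argument never has to treat it. The cost of your route is the nondegeneracy bookkeeping ($t\ge 1$, $st\neq 0$, $s't\neq 0$), which you supply correctly. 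One trivial slip: the first column of $M(S)$ is indexed by $S^{(-1)}$, not $S$ (it is the first row that is indexed by $S$). This does not affect the count of $0$'s and $1$'s.
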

}
\begin{proof}
We evaluate $M(S)M(T)M(S')$ in two ways.
First, from Theorem \ref{thm-matrixrep}, we have
\[M(S)M(T) = \la (J-I) + \mu I.\]
Hence,
\begin{align}
M(S)M(T)M(S') &= (\la (J-I) + \mu I) M(S')\nonumber\\
&= \la s' J + (\mu - \la) M(S').\label{eq3.9.1}
\end{align}
Next, from Theorem \ref{thm-matrixrep} and Theorem \ref{thm-BR}, we have
\[M(T)M(S') = M(S')M(T) = \la' (J-I) + \mu' I.\]
Hence,
\begin{align}
M(S)M(T)M(S') &= M(S) (\la' (J-I) + \mu' I)\nonumber\\
&= \la' s J + (\mu' - \la') M(S).\label{eq3.9.2}
\end{align}
Equating (\ref{eq3.9.1}) and (\ref{eq3.9.2}), we have
\begin{equation}
\label{eq3.9.3}
 (\mu' - \la') M(S) - (\mu - \la) M(S') = (\la s' - \la' s)J.
\end{equation}
For convenience, denote $N =  (\mu' - \la') M(S) - (\mu - \la) M(S')$.

From (\ref{eq3.9.3}), all the entries of $N$ are identical. 
However, from the definition of $N$, for all $g,h$, we have
\[ N_{g,h} = \begin{cases}
0 & \text{if $M(S)_{g,h} = M(S')_{g,h} = 0$}\\
\la - \mu & \text{if $M(S)_{g,h} = 0$ and $M(S')_{g,h} = 1$}\\
\mu' - \la'  & \text{if $M(S)_{g,h} = 1$ and $M(S')_{g,h} = 0$}\\
\mu'- \mu + \la - \la' & \text{if $M(S)_{g,h} = M(S')_{g,h} = 1$.}
\end{cases}
\]
We refer to the above four possibilities as types, $1,2,3$ and $4$, resp.

We make some observations about the possible types of  the entries in $N$.
\begin{itemize}
\item $N$ cannot contain entries of types 1 and 2 (this is because $\mu \neq \la$).
\item $N$ cannot contain entries of types 1 and 3 (this is because $\mu' \neq \la'$).
\item $N$ cannot contain entries of types 2 and 4 (this is because $\mu' \neq \la'$).
\item $N$ cannot contain entries of types 3 and 4 (this is because $\mu \neq \la$).
\end{itemize}
If all entries of $N$ are of the same type, then $M(S)$ and $M(S')$ are both all-0's or all-1's matrices. We can ignore these cases, due to the assumption that $1 \leq s,s' \leq |G|-1$.
Therefore we can assume that $N$ contains entries of at least two types.  
There are only two cases left to consider.
\begin{description}
\item[Case 1] All entries in $N$ are of type 1 or 4, and entries of both types exist. In this case, we see that $S = S'$, which is ruled out by assumption.
\item[Case 2] All entries in $N$ are of type 2 or 3, and entries of both types exist. Here we have $\la-\mu=\mu'-\la'$ and $S' = G \sm S$, the complement of $S$. Therefore, from Lemma \ref{lem-complement}, $(S',T)$ is an $\F[G,n-s,t,t-\la,t-\mu]$. Thus $s' = n-s$, $\la' = t - \la$ and $\mu' = t - \mu$, where $\la-\mu=\mu'-\la'$ is guaranteed.
\end{description}
Because we have considered all the possible cases, the proof is complete.
\end{proof}

Combining Theorem~\ref{thm-matrixrep} and Theorem~\ref{thm-BR}, we have the following corollary indicating that if $(S,T)$ is a $(\la,\mu)$-fold factorization of $G$ with $\la \notin \{0,\mu\}$, then so is $(T,S)$. Equivalently, the two factors $S$ and $T$ of a $(\la,\mu)$-fold factorization of $G$ with $\la \notin \{0,\mu\}$ are commutative in $\Z[G]$.

\begin{corollary}
\label{cor-commutativity}
Let $G$ be a group (possibly nonabelian), let $\la$ be a positive integer and let $\mu$ be a nonnegative integer with $\la \ne \mu$. If $(S,T)$ is a $(\la,\mu)$-fold factorization of $G$, then $ST=TS=\la(G-e)+\mu e$. Consequently, $(S,T)$ is an $\F[G,s,t,\la,\mu]$ if and only if $(T,S)$ is an $\F[G,t,s,\la,\mu]$.
\end{corollary}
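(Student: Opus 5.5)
Taking $X=M(S)$ and $Y=M(T)$, I would combine Theorem~\ref{thm-matrixrep} with Theorem~\ref{thm-BR}. Since $(S,T)$ is a $(\la,\mu)$-fold factorization, Theorem~\ref{thm-matrixrep} gives $M(S)M(T)=\la(J-I)+\mu I$. Both matrices are $\{0,1\}$-matrices, hence nonnegative and integral. As noted after the definition of $M(A)$, $M(S)$ has constant row and column sums $s=|S|$, and $M(T)$ has constant row and column sums $t=|T|$.

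The hypotheses of Theorem~\ref{thm-BR} require $\la\notin\{0,\mu\}$ and $n>1$. The first holds because $\la$ is positive and $\la\neq\mu$. For $n=1$ the group is trivial and $ST=TS$ holds automatically in the commutative ring $\Z[G]\cong\Z$, so we may assume $n>1$. Theorem~\ref{thm-BR} then gives
\[
M(T)M(S)=M(S)M(T)=\la(J-I)+\mu I.
\]
Applying Theorem~\ref{thm-matrixrep} in the reverse direction, to the pair $(T,S)$, we conclude that $(T,S)$ is a $(\la,\mu)$-fold factorization. In group ring language this says $TS=\la(G-e)+\mu e=ST$.

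The only point needing care is that Theorem~\ref{thm-matrixrep} is an equivalence valid for an arbitrary ordered pair of subsets, so it applies equally to $(T,S)$. This is essentially the identity $M(A)M(B)=M(AB)$, extended linearly to group ring elements, together with the fact that $M(\la(G-e)+\mu e)=\la(J-I)+\mu I$.

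The converse ("if") direction follows by symmetry: swap the roles of $S$ and $T$ and apply the same argument to the factorization $(T,S)$. I do not expect any real obstacle, since the substantive content is already in the Bridges--Ryser theorem.
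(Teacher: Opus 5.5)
Your proposal is correct and follows the paper's proof: apply Theorem~\ref{thm-matrixrep} to get $M(S)M(T)=\la(J-I)+\mu I$, then invoke Theorem~\ref{thm-BR} to obtain $M(T)M(S)=M(S)M(T)$, using $\la>0$, $\la\ne\mu$ and the constant line sums of $M(S)$ and $M(T)$, and translate back. Your separate handling of the trivial case $n=1$ is a small but legitimate extra bit of care, since the paper's proof passes over it even though Theorem~\ref{thm-BR} requires $n>1$.
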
 
\begin{proof}
By Theorem~\ref{thm-matrixrep}, if $(S,T)$ is an $\F[G,s,t,\la,\mu]$, then $M(S)M(T)=\la(J-I)+\mu I$, where $M(S)$ has constant row and column sum $s$ and $M(T)$ has constant row and column sum $t$. Because $\la > 0$ and $\la \ne \mu$, by Theorem~\ref{thm-BR}, we have $M(T)M(S)=M(S)M(T)=\la(J-I)+\mu I$, implying that $(T,S)$ is an $\F[G,t,s,\la,\mu]$. 
\end{proof}

Corollary \ref{cor-commutativity} is trivially true for finite abelian groups. The strength of Corollary \ref{cor-commutativity} comes from the fact that it also addresses finite nonabelian groups.

\subsection{Symmetric $(\la,\mu)$-fold factorizations and equivalence of factorizations}
\label{equiv.sec}

For a multiplicative group $G$, a subset $A \subseteq G$ is \emph{symmetric} provided that $g \in A$ if and only if $g^{-1} \in A$. In group ring notation, we have
$A = A^{(-1)}$, indicating that $A$ is invariant under the group inverse operation of $G$. 
Furthermore, if $A$ is symmetric and the identity $e \notin A$, then $M(A)$ is the adjacency matrix of the Cayley graph with connection set $A$.

A subset $A \subseteq G$ has \emph{left multiplier} (resp.\ \emph{right multiplier}) $-1$ if there exists $g \in G$ such that $A^{(-1)}=gA$ (resp.\ $A^{(-1)}=Ag$). A subset $A \subseteq G$ has multiplier $-1$ if $A$ has a left multiplier $-1$ or a right multiplier $-1$. By definition, a symmetric subset of $G$ necessarily has multiplier $-1$ (and $g = e$ in this case).

The following lemma indicates that, for a $(\la,\mu)$-fold factorization, one factor has multiplier $-1$ (resp.\ is symmetric) if and only if the other factor has multiplier $-1$ (resp.\ is symmetric). This result generalizes \cite[Lemma 1.8]{KLS}. 

\begin{lemma}
\label{lem-symmetric}
Let $(S,T)$ be a $(\la,\mu)$-fold factorization of $G$ with $\la \ne \mu$.
\begin{enumerate}
\item If one of the factors $S$ or $T$ has multiplier $-1$, then the other factor also has multiplier $-1$. Specifically, there exists $g \in G$, such that
\begin{enumerate}[label={\textup{(}\alph*\textup{)}}]
\item $S^{(-1)}=Sg$ if and only if $T^{(-1)}=g^{-1}T$
\item $S^{(-1)}=gS$ if and only if $T^{(-1)}=Tg^{-1}$.
\end{enumerate}
\item If one of the factors $S$ or $T$ is symmetric, then the other factor is also symmetric.
\end{enumerate}
\end{lemma}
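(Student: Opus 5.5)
The plan is to combine the group ring identity $ST=\la(G-e)+\mu e$ with the uniqueness result, Theorem~\ref{thm-unique}, and the commutativity result, Corollary~\ref{cor-commutativity}. The idea is to build from a given factorization a second factorization that shares one factor with the original, and then let uniqueness force the other factor. The key observation is that $\la(G-e)+\mu e$ is fixed by inversion and by conjugation: $(\la(G-e)+\mu e)^{(-1)}=\la(G-e)+\mu e$, and $g^{-1}(\la(G-e)+\mu e)g=\la(G-e)+\mu e$ for every $g\in G$. One caveat: Corollary~\ref{cor-commutativity} needs $\la>0$. If $\la=0$, then $ST=\mu e$ with $\mu\ne 0$, so $|S|=|T|=1$ and the statement is immediate. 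From now on I assume $\la\ge 1$.

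For part 1(a), suppose $S^{(-1)}=Sg$. Applying $^{(-1)}$ to $ST=\la(G-e)+\mu e$ gives $T^{(-1)}S^{(-1)}=\la(G-e)+\mu e$. Substituting $S^{(-1)}=Sg$ gives $T^{(-1)}Sg=\la(G-e)+\mu e$. Conjugating by $g$ (multiply on the left by $g$ and on the right by $g^{-1}$) yields $(gT^{(-1)})S=\la(G-e)+\mu e$. So $(gT^{(-1)},S)$ is a $(\la,\mu)$-fold factorization. By Corollary~\ref{cor-commutativity}, $(S,T)$ being a factorization implies $(T,S)$ is one as well. Now $(T,S)$ and $(gT^{(-1)},S)$ share the second factor, so Theorem~\ref{thm-unique}(1) gives $T=gT^{(-1)}$, that is, $T^{(-1)}=g^{-1}T$.

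For the converse of 1(a), suppose $T^{(-1)}=g^{-1}T$. Then $T^{(-1)}S^{(-1)}=\la(G-e)+\mu e$ becomes $g^{-1}TS^{(-1)}=\la(G-e)+\mu e$. Conjugating gives $T(S^{(-1)}g^{-1})=\la(G-e)+\mu e$. Comparing with $(T,S)$ and using Theorem~\ref{thm-unique}(2), we get $S=S^{(-1)}g^{-1}$, so $S^{(-1)}=Sg$.

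Part 1(b) is handled the same way, or more cheaply by applying 1(a) to the reflected factorization $(T^{(-1)},S^{(-1)})$ from Lemma~\ref{lem-STorder}. That choice swaps left and right multipliers and exchanges the roles of $S$ and $T$. The general statement in part 1 follows because "multiplier $-1$" means having either a left or a right multiplier $-1$, and each case is covered by (a) or (b).

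Part 2 is the special case $g=e$ of 1(a), since symmetric means $S^{(-1)}=S$.

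The main obstacle is bookkeeping in the nonabelian setting. I have to get the sides right in the conjugation step so that the new factor lands on the correct side to match a factorization sharing a factor with $(S,T)$ or $(T,S)$, and I have to remember that this matching is exactly where Corollary~\ref{cor-commutativity} is needed. The group ring manipulations themselves are routine.
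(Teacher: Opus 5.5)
Your proof is correct and follows essentially the same route as the paper, combining inversion/reflection with Corollary~\ref{cor-commutativity} and the uniqueness result Theorem~\ref{thm-unique}; the only differences are that the paper compares $ST=S^{(-1)}g^{-1}T$ directly with the factorization $(S^{(-1)},T^{(-1)})$, which avoids your conjugation step, and simply calls 1(b) ``similar'', whereas you derive it from 1(a) via Lemma~\ref{lem-STorder}. Your separate handling of $\la=0$ is a welcome addition, since the paper's appeal to Corollary~\ref{cor-commutativity} tacitly assumes $\la>0$.
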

\begin{proof}
By Lemma \ref{lem-STorder} and Corollary \ref{cor-commutativity},  $(S,T)$ is a $(\la,\mu)$-fold factorization of $G$ with $\la \ne \mu$ if and only if $(S^{(-1)},T^{(-1)})$ is a $(\la,\mu)$-fold factorization of $G$  with $\la \ne \mu$.

For part 1(a), if $S^{(-1)}=Sg$ for some $g \in G$, then $S^{(-1)}g^{-1}T=ST=\la (G - e)+ \mu e=S^{(-1)}T^{(-1)}$. In view of Theorem \ref{thm-unique}, we have $T^{(-1)}=g^{-1}T$. Conversely, if $T^{(-1)}=g^{-1}T$, then $S^{(-1)}g^{-1}T=S^{(-1)}T^{(-1)}=\la (G - e)+ \mu e=ST$, In view of Theorem \ref{thm-unique}, we have $S^{(-1)}=Sg$. Part 1(b) follows from a similar argument. 

Part 2 can be proven using the same argument as in part 1(a) with $g=e$. 
\end{proof}

In view of Lemma \ref{lem-symmetric}, we have the following definition.

\begin{definition}
\label{def-multisym}
\quad 
\begin{enumerate}
\item A $(\la,\mu)$-fold factorization $(S,T)$ of a group $G$ has \emph{multiplier} $-1$ if there exists $g \in G$ such that one of the following holds:
\begin{enumerate}
\item $S^{(-1)}=Sg$ and $T^{(-1)}=g^{-1}T$
\item $S^{(-1)}=gS$ and $T^{(-1)}=Tg^{-1}$.
\end{enumerate}
\item A $(\la,\mu)$-fold factorization $(S,T)$ of a group $G$ is \emph{symmetric} if $S$ and $T$ are both symmetric. 
\end{enumerate}
\end{definition}

\begin{remark}
\label{rem-multi}
When $G$ is a finite abelian group, parts 1(a) and 1(b) in Definition~\ref{def-multisym} are equivalent. Furthermore, in light of Theorem~\ref{thm-unique}, showing that $S^{(-1)} = Sg$ (or $S^{(-1)} = gS$, resp.) suffices to establish part 1(a) (or 1(b), resp.).
\end{remark}

Next, we introduce the notion of equivalence between $(\la,\mu)$-fold factorizations, which naturally generalizes the equivalence notions for near-factorizations and $\la$-fold near factorizations that were studied in \cite[Section 1.3]{KLS}.

\begin{definition}[Equivalence of $(\la,\mu)$-fold factorizations]
\label{def-equivalence}
Suppose that $(S,T)$ and $(S',T')$ are both $\F[G,s,t,\la,\mu]$. {Let $\Aut(G)$ be the automorphism group of $G$.} 
\begin{enumerate}
\item If $s \ne t$, $(S,T)$ and $(S',T')$ are \emph{equivalent} if there exist $f \in \Aut(G)$ and $g \in G$ such that $S' = f(S)g$ and $T' = g^{-1} f(T)$. If $s = t$, the two factorizations are equivalent if $(S,T)$ is equivalent to either $(S',T')$ or $(T',S')$, as defined in the case $s \neq t$.
\item Assume that $G$ is abelian. If $s \ne t$, $(S,T)$ and $(S',T')$ are \emph{translate-equivalent} if there exists $g \in G$ such that $S' = Sg$ and $T' = Tg^{-1}$. If $s = t$, the two factorizations are translate-equivalent if there exists $g\in G$ such that either $S' = Sg$ and $T' = Tg^{-1}$, or $S'=Tg^{-1}$ and $T'=Sg$.
\end{enumerate} 
\end{definition}

Note that part 2 of Definition~\ref{def-equivalence} defines translate equivalence only for factorizations in abelian groups, since the definition relies on commutativity and does not extend to the nonabelian setting. If $\la \ne \mu$, in order to show that $(S,T)$ and $(S',T')$ are equivalent or translate-equivalent $(\la,\mu)$-fold factorizations of $G$, it is sufficient to consider one of the two factors, as the other factor is uniquely determined according to Theorem \ref{thm-unique}.

\begin{lemma}
\label{lem-equivalence}
Suppose $(S,T)$ and $(S',T')$ are both $\F[G,s,t,\la,\mu]$ with $\la \ne \mu$.
\begin{enumerate}
\item If $s \ne t$, then $(S,T)$ and $(S',T')$ are equivalent if there exist $f \in \Aut(G)$ and $g \in G$ such that $S' = f(S)g$ or $T' = g^{-1} f(T)$. If $s = t$, the two factorizations are equivalent if there exist $f \in \Aut(G)$ and $g \in G$ such that $S' = f(S)g$, $S' = f(T)g$, $T' = g^{-1} f(S)$, or $T' = g^{-1} f(T)$.
\item Assume that $G$ is abelian. If $s \ne t$, then $(S,T)$ and $(S',T')$ are translate-equivalent if there exists $g \in G$ such that $S' = Sg$ or $T' = Tg$. If $s = t$, the two factorizations are translate-equivalent if there exists $g \in G$ such that $S' = Sg$, $S' = Tg$, $T' = Sg$, or $T' = Tg$.
\end{enumerate} 
\end{lemma}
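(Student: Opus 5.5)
The idea is to reduce each of the weaker conditions in the lemma to the full condition of Definition~\ref{def-equivalence}, using Theorem~\ref{thm-unique} to recover the missing factor. The basic observation is that equivalence operations send factorizations to factorizations. Let $f \in \Aut(G)$ and $g \in G$. Apply $f$ to $ST = \la(G-e)+\mu e$; since $f$ fixes $G$ and $e$, this gives $f(S)f(T) = \la(G-e)+\mu e$. Inserting $gg^{-1}$ then gives
\[
\bigl(f(S)g\bigr)\bigl(g^{-1}f(T)\bigr)=\la(G-e)+\mu e .
\]
So $(f(S)g, g^{-1}f(T))$ is again an $\F[G,s,t,\la,\mu]$.

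\textbf{Part 1, $s \ne t$.} Suppose first $S' = f(S)g$. Then $(S', T')$ and $(S', g^{-1}f(T))$ are both $(\la,\mu)$-fold factorizations of $G$ sharing the first factor. By Theorem~\ref{thm-unique}(2), $T' = g^{-1}f(T)$, which is exactly equivalence. If instead $T' = g^{-1}f(T)$, then $(S',T')$ and $(f(S)g, T')$ share the second factor, and Theorem~\ref{thm-unique}(1) gives $S' = f(S)g$.

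\textbf{Part 1, $s = t$.} The cases $S'=f(S)g$ and $T'=g^{-1}f(T)$ are handled exactly as above. For the cross cases I use Corollary~\ref{cor-commutativity}: when $\la \ne \mu$ and $\la>0$, $(T,S)$ is also a $(\la,\mu)$-fold factorization. (I expect $\la \ge 1$ here; if $\la = 0$, then $st=\mu$ forces degenerate parameters, or one can argue via the invertibility in Lemma~\ref{lem-matrixinv}. This is the one point needing a little care.)
\begin{itemize}
\item If $S' = f(T)g$, then $(f(T)g, g^{-1}f(S))$ is a factorization with first factor $S'$. Theorem~\ref{thm-unique} gives $T' = g^{-1}f(S)$. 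Hence $(S',T')$ is obtained from $(T,S)$, which is equivalence in the $s=t$ sense.
\item If $T' = g^{-1}f(S)$, the same argument applied to the second factor gives $S' = f(T)g$.
\end{itemize}

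\textbf{Part 2.} Here $G$ is abelian and translates commute with the factors, so the same argument applies with $f=\mathrm{id}$.
\begin{itemize}
\item If $S' = Sg$, then $(Sg, Tg^{-1})$ is a factorization, and uniqueness gives $T' = Tg^{-1}$.
\item If $T' = Tg$, then $(Sg^{-1}, Tg)$ is a factorization, so $S' = Sg^{-1}$. This is translate-equivalence with translating element $g^{-1}$.
\item When $s=t$, the cross cases $S'=Tg$ and $T'=Sg$ follow in the same way, using the factorization $(T,S)$.
\end{itemize}

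The main obstacle is only bookkeeping: matching each of the four one-factor conditions to the correct two-factor form in Definition~\ref{def-equivalence}, including the inverted translate $g^{-1}$.
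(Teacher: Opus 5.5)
Your argument is correct and is exactly the reasoning the paper intends: the paper gives no formal proof, only the remark that the missing factor is forced by Theorem~\ref{thm-unique}. You supply that step, together with Corollary~\ref{cor-commutativity} for the cross cases; your invertibility fallback for $\la=0$ is valid, since Lemma~\ref{lem-matrixinv} gives $M(T)=\frac{\la}{s}J+(\mu-\la)M(S)^{-1}$ and hence $M(T)M(S)=M(S)M(T)$. The only bookkeeping you leave implicit is for nonabelian $G$: your conclusion $S'=f(T)g$, $T'=g^{-1}f(S)$ matches the definition's required form $T'=f_1(S)g_1$, $S'=g_1^{-1}f_1(T)$ once you take $f_1(x)=g^{-1}f(x)g$ and $g_1=g^{-1}$.
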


In the literature, we have seen close connections between the equivalence notion and the symmetric property of $(\la,0)$-fold factorizations.

\begin{theorem}
\label{thm-equivalence}
\mbox{\quad}
\begin{enumerate}
\item \textup{(de Caen, Gregory, Hughes and Kreher, \cite[Proposition 2]{CGHK})} Let $G$ be a finite abelian group. Let $(S,T)$ be $(1,0)$-fold factorization of $G$. Then there exists $g \in G$, such that $(Sg,Tg^{-1})$ is a symmetric $(1,0)$-fold factorization of $G$.
\item \textup{(Kreher,  Li and Stinson, \cite[Theorem 1.13]{KLS})} Let $G$ be a finite group. Let $(S,T)$ be a $(\la,0)$-fold factorization of $G$. There exist $f \in \Aut(G)$ and $g \in G$ such that $(f(S)g,g^{-1}f(T))$ is a symmetric $(\la,0)$-fold factorization of $G$ if and only if there exists $h \in G$, such that $(Sh,h^{-1}T)$ is a symmetric $(\la,0)$-fold factorization of $G$.
\end{enumerate}
\end{theorem}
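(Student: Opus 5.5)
Part~2 is the easy direction, so I would dispose of it first. If $h \in G$ is such that $(Sh,h^{-1}T)$ is a symmetric $(\la,0)$-fold factorization, take $f=\mathrm{id}$ and $g=h$. Conversely, suppose $f\in\Aut(G)$ and $g\in G$ make $(f(S)g,g^{-1}f(T))$ symmetric. Apply $f^{-1}$ and put $h=f^{-1}(g)$. This gives $f^{-1}(f(S)g)=Sh$ and $f^{-1}(g^{-1}f(T))=h^{-1}T$. An automorphism commutes with inversion, $f^{-1}(A^{(-1)})=f^{-1}(A)^{(-1)}$, so symmetry of both factors is preserved. Likewise $f^{-1}$ maps a $(\la,0)$-fold factorization to a $(\la,0)$-fold factorization, because it fixes $e$ and preserves the product counts in \eqref{eqn-def}. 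Hence $(Sh,h^{-1}T)$ is a symmetric $(\la,0)$-fold factorization.

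For Part~1 I would split the argument into two steps.

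\emph{Step A:} show that a $(1,0)$-fold factorization $(S,T)$ of an abelian group has multiplier $-1$, i.e.\ $S^{(-1)}=Sg$ for some $g$. By Remark~\ref{rem-multi} and Lemma~\ref{lem-symmetric}, this automatically gives $T^{(-1)}=Tg^{-1}$.

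\emph{Step B:} replace $g$ by a square. If $g=h^2$, then $(Sh)^{(-1)}=S^{(-1)}h^{-1}=Sgh^{-1}=Sh$. So $Sh$ is symmetric, and by Lemma~\ref{lem-symmetric}(2) so is $Th^{-1}$. For Step~B I would use the partition structure. Since $ST=G-e$ with $\la=1$, the translates $Sy$ ($y\in T$) partition $G\setminus\{e\}$. Applying inversion, the sets $S^{(-1)}y^{-1}=Sgy^{-1}$ ($y\in T$) also partition $G\setminus\{e\}$. I would then compare the products $\prod_{x\in S}x$ and $\prod_{y\in T}y$ with $g^{|S|}$ and $g^{|T|}$. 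Together with $st=n-1$ (so that, when $n$ is even, both $s$ and $t$ are odd), this should force $g$ into the subgroup of squares $G^2$.

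For Step~A I would first try the character-theoretic route. For every nonprincipal $\chi$ we have $\chi(S)\chi(T)=-1$, so $\chi(S)$ is a unit in $\Z[\zeta_{\exp(G)}]$. The quotient $\chi(S^{(-1)})/\chi(S)=\ol{\chi(S)}/\chi(S)$ is an algebraic integer all of whose Galois conjugates have modulus $1$, so by Kronecker's theorem it is a root of unity $\epsilon_\chi$. The goal is to show there is a single $g$ with $\epsilon_\chi=\chi(g)$ for all $\chi$; then Proposition~\ref{prop-fourier} gives $S^{(-1)}=Sg$.

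Gluing the $\epsilon_\chi$ into one group element is the step I expect to be the main obstacle. The characters only determine each $\epsilon_\chi$ individually, and they could differ by a sign or fail to be multiplicative in $\chi$. If this stalls, I would switch to a combinatorial argument in the spirit of \cite{CGHK}. Using Lemma~\ref{lem-matrixinv}, write $M(T)=\tfrac1sJ-M(S)^{-1}$, and exploit the fact that both $(S,T)$ and $(S^{(-1)},T^{(-1)})$ are near-factorizations. Let $x_0\in S$ be arbitrary. Among the partitions $\{Sy\}$ and $\{x^{-1}S\}$ of $G\setminus\{e\}$, I would locate the element $g$ such that $Sg$ and $S^{(-1)}$ coincide, by tracking where $e$ is ``missing'' in each partition. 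Theorem~\ref{thm-unique} would then be used to confirm that the candidate translate $Sg$ equals $S^{(-1)}$ exactly.
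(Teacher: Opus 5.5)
Your Part~2 is correct. It is the same pull-back argument the paper gives for Theorem~\ref{thm-transequiv}: apply $f^{-1}$ and set $h=f^{-1}(g)$. It needs no commutativity, so it covers nonabelian $G$ as stated.

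Step~B of Part~1 is also sound once you make it explicit. Multiplying together all elements on both sides of $S^{(-1)}=Sg$ gives $g^{s}=\bigl(\prod_{x\in S}x\bigr)^{-2}\in G^2$. Since $st=n-1$, $s$ is odd when $n$ is even, so $g\in G^2$; when $n$ is odd, $G^2=G$ anyway. The partition discussion is not needed. This is a more elementary substitute for the paper's argument in Lemma~\ref{lem-equivmulti}. That argument uses the characters of order at most $2$ together with $\chi(S)\neq 0$, and so it also works when $s$ is even.

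The genuine gap is Step~A, which is the real content of Part~1: you never produce a $g$ with $S^{(-1)}=Sg$.
\begin{itemize}
\item Kronecker's theorem does not help. The fact $|\ol{\chi(S)}/\chi(S)|=1$ is trivial, and knowing that each $\epsilon_\chi$ is individually a root of unity says nothing about whether $\chi\mapsto\epsilon_\chi$ is evaluation at a single group element.
\item Your combinatorial fallback (``tracking where $e$ is missing'') states the desired conclusion rather than giving a mechanism for reaching it.
\end{itemize}

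The missing idea is to realize all the $\epsilon_\chi$ at once as character values of one integral group ring element: the overlap deviation $A=S^{(-1)}T-G$ of Definition~\ref{def-dev}.
\begin{itemize}
\item Since $st=n-1$, we get $\chi_0(A)=-1$.
\item For $\chi\neq\chi_0$, $\chi(A)=\ol{\chi(S)}\chi(T)=-\epsilon_\chi$, which has modulus $|\chi(S)\chi(T)|=1$.
\item Hence $\chi(AA^{(-1)})=1$ for every $\chi$, so $AA^{(-1)}=e$ by Proposition~\ref{prop-fourier}.
\item Reading off the coefficient of $e$ gives $\sum_h a(h)^2=1$ with integer $a(h)$, so $A=\pm g$ for some $g$, and the principal character forces $A=-g$.
\item Then $(g^{-1}S^{(-1)})T=G-e$, and Theorem~\ref{thm-unique} yields $S^{(-1)}=Sg$.
\end{itemize}
Equivalently, $\epsilon_\chi=\chi(g)$ for every $\chi$, which is exactly the gluing you were stuck on. This is how the paper obtains Part~1, as the case $d=-1$ of Theorem~\ref{thm-transequivsym} and Corollary~\ref{cor-diff1}, via Theorem~\ref{thm-dev}.

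If you prefer the matrix form of your fallback, the same idea works there. Take $X=M(S)$, so that $M(T)=\frac1sJ-X^{-1}$, and set $P=X^TX^{-1}=J-X^TM(T)$. This $P$ is an integer matrix with row sums $1$. It is orthogonal because group matrices over an abelian group commute. Hence $P$ is a permutation group matrix $M(\{g\})$, and $X^T=PX$ gives $S^{(-1)}=gS$.
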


We now proceed to generalize Theorem~\ref{thm-equivalence} to $(\la,\mu)$-fold factorizations. The following theorem extends part 2 of Theorem~\ref{thm-equivalence} when $G$ is abelian, where the key insight is that the symmetric property is preserved under group automorphisms. 

\begin{theorem}
\label{thm-transequiv}
Let $(S,T)$ be a $(\la,\mu)$-fold factorization of a finite abelian group $G$. Then $(S,T)$ is equivalent to a symmetric
$(\la,\mu)$-fold factorization of $G$ if and only if $(S,T)$ is translate-equivalent to a symmetric $(\la,\mu)$-fold factorization of $G$.
\end{theorem}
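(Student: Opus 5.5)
The reverse implication is immediate. Translate-equivalence is the special case $f=\mathrm{id}$ of equivalence in Definition~\ref{def-equivalence}. When $s=t$ one should also note that the swapped form $S'=Tg^{-1}$, $T'=Sg$ matches the swapped clause of equivalence, using commutativity of $G$. So all the work is in the forward direction, and I would argue it as follows.

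Suppose $(S,T)$ is equivalent to a symmetric $(\la,\mu)$-fold factorization $(S',T')$.

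\textbf{Case $s\neq t$.} Here there exist $f\in\Aut(G)$ and $g\in G$ with $S'=f(S)g$ and $T'=g^{-1}f(T)$. The key observation is that an automorphism commutes with inversion, so $f(A)^{(-1)}=f(A^{(-1)})$ for any $A\subseteq G$. Since $f^{-1}$ is also an automorphism, $f^{-1}$ maps symmetric sets to symmetric sets and respects translation: $f^{-1}(Ag)=f^{-1}(A)f^{-1}(g)$. Now put $h=f^{-1}(g)$. Then
\[
f^{-1}(S')=Sh, \qquad f^{-1}(T')=h^{-1}T,
\]
and both sets are symmetric because $S'$ and $T'$ are. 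Applying an automorphism to $S'T'=\la(G-e)+\mu e$ preserves this group ring identity, since $f^{-1}(G)=G$ and $f^{-1}(e)=e$. Hence $(Sh,h^{-1}T)$ is a symmetric $(\la,\mu)$-fold factorization of $G$. Because $G$ is abelian, $h^{-1}T=Th^{-1}$, so this factorization is translate-equivalent to $(S,T)$.

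\textbf{Case $s=t$.} Equivalence may instead be realised through the swap, namely $S'=f(T)g$ and $T'=g^{-1}f(S)$. The same computation then shows that $(Th,h^{-1}S)$ is symmetric. Using Corollary~\ref{cor-commutativity}, or simply the commutativity of $G$, the pair $(Sh^{-1},Th)$ is symmetric as well. Translate-equivalence with a swap is allowed in Definition~\ref{def-equivalence} when $s=t$, so this case is also covered.

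The only point needing care is the bookkeeping in the swapped case $s=t$, making sure the resulting pair matches one of the forms allowed in Definition~\ref{def-equivalence}. The central idea — pulling back by $f^{-1}$ preserves symmetry and turns $g$ into $f^{-1}(g)$ — is routine, so I expect no real obstacle.
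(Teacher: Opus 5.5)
Your proof is correct and follows the paper's argument. You pull back by $f^{-1}$, set $h=f^{-1}(g)$, and use that automorphisms commute with inversion and so preserve symmetry; the paper treats the $s=t$ swap uniformly by writing $(U,V)\in\{(S,T),(T,S)\}$, and your separate treatment of that case comes to the same thing. One small remark: in the swapped case, $(Sh^{-1},Th)$ is translate-equivalent to $(S,T)$ through the unswapped clause (take the translating element to be $h^{-1}$), so you never actually need the swap clause there.
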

\begin{proof}
The backward direction follows from the definition. For the forward direction, suppose that $(S,T)$ is equivalent to a symmetric factorization. Suppose $s=|S|$ and $t=|T|$. By Definition~\ref{def-equivalence}(1), there exist $f \in\Aut(G)$ and $h\in G$ such that $f(U)h$, $f(V)h^{-1}$ are symmetric, where
$$
(U,V)=\begin{cases}
          (S,T) & \mbox{if $s \ne t$,} \\
          (S,T) \text{ or } (T,S) & \mbox{if $s=t$.}
          \end{cases}  
$$
Applying $f^{-1}$ and setting $g=f^{-1}(h)$ shows that $Ug$ and $Vg^{-1}$ are symmetric. Hence, $(S,T)$ is translate-equivalent to a symmetric $(\la,\mu)$-fold factorization $(Ug,Vg^{-1})$.  
\end{proof}

Theorem~\ref{thm-transequiv} indicates that, if $(S,T)$ is a $(\la,\mu)$-fold factorization of a finite abelian group $G$, then it suffices to inspect all $(\la,\mu)$-fold factorizations of $G$ that are translate-equivalent to $(S,T)$ in order to determine if $(S,T)$ is equivalent to a symmetric $(\la,\mu)$-fold factorization of $G$.

Below, we proceed to generalize part 1 of Theorem~\ref{thm-equivalence}. The following lemma indicates that $(S,T)$ is translate-equivalent to a symmetric $(\la,\mu)$-fold factorization of a finite abelian group $G$ if and only if $(S,T)$ has multiplier $-1$.  

\begin{lemma}
\label{lem-equivmulti}
Suppose $G$ is a finite abelian group and $(S,T)$ is an $\F[G,s,t,\la,\mu]$ with $\la \ne \mu$. 
\begin{enumerate}
\item If $(S,T)$ has multiplier $-1$, then there exists a unique $g \in G$, such that $S^{(-1)}=Sg$ and $T^{(-1)}=Tg^{-1}$. Moreover, there exists $x \in G$ such that $g=x^2$.
\item $(S,T)$ is translate-equivalent to a symmetric $(\la,\mu)$-fold factorization of $G$ if and only if $(S,T)$ has multiplier $-1$.
\end{enumerate}
\end{lemma}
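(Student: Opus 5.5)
Part 1 splits into existence of $g$, its uniqueness, and the fact that $g$ is a square. Existence and the relation $T^{(-1)}=Tg^{-1}$ come directly from the definition of multiplier $-1$ (Definition~\ref{def-multisym} and Remark~\ref{rem-multi}, $G$ abelian), combined with Theorem~\ref{thm-unique}. For uniqueness, suppose $S^{(-1)}=Sg=Sg'$. Then $S=Sh$ with $h=g'g^{-1}$, so $S$ is a union of cosets of $H=\langle h\rangle$. Multiplying $ST=\la(G-e)+\mu e$ by $h$ gives $ST = ShT = \la(G-e)+\mu e$ again. But $ShT=\la(G-e)h+\mu h$, which equals $\la(G-e)+\mu e$ only if $h=e$, because $\la\ne\mu$: compare the coefficients of $e$ and $h$. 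Hence $g=g'$.

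For the square root I will use characters. From $S^{(-1)}=Sg$ we get $\overline{\chi(S)}=\chi(g)\chi(S)$ for every $\chi\in\wh{G}$. The key input is that $\chi(S)\ne 0$ for all $\chi$: applying $\chi$ to \eqref{eqn-def} gives $\chi(S)\chi(T)=\mu-\la\neq0$ for nonprincipal $\chi$, and $st\ne0$ for $\chi_0$. Therefore $\chi(g)=\overline{\chi(S)}/\chi(S)$, and so $\chi(g)=\overline{\chi(S)}^2/|\chi(S)|^2$. I then want to show that $g$ lies in $G^2=\{x^2\}$. Equivalently, every character of order $2$ (every character trivial on $G^2$) satisfies $\chi(g)=1$. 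For such $\chi$, $\chi(S)$ is a real integer, so $\chi(g)=\overline{\chi(S)}/\chi(S)=1$. Since the characters trivial on $G^2$ are exactly $(G^2)^\perp$, and $g$ is killed by all of them, duality ($(G^2)^{\perp\perp}=G^2$, via the $N^\perp$ correspondence with $\wh{G/G^2}$) gives $g\in G^2$, say $g=x^2$. This character step is the only nonformal part, and I expect the nonvanishing $\chi(S)\ne0$, which rests on $\la\ne\mu$, to be the main point to get right.

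For part 2, the forward direction goes as follows. If $Sg$ and $Tg^{-1}$ are symmetric, then $S^{(-1)}g^{-1}=Sg$, so $S^{(-1)}=Sg^2$ and $(S,T)$ has multiplier $-1$. When $s=t$ and the translate is of the swapped form, I argue the same way: $S'=Tg^{-1}$ is symmetric, so $T$ has multiplier $-1$, and by Lemma~\ref{lem-symmetric} so does $S$.

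For the converse, part 1 gives $S^{(-1)}=Sx^2$. Put $S'=Sx$ and $T'=Tx^{-1}$. Then $S'^{(-1)}=S^{(-1)}x^{-1}=Sx=S'$, so $S'$ is symmetric. Since $(S',T')$ is still an $\F[G,s,t,\la,\mu]$, Lemma~\ref{lem-symmetric}(2) makes $T'$ symmetric as well. Hence $(S,T)$ is translate-equivalent to the symmetric factorization $(S',T')$.
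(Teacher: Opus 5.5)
Your proof is correct and follows essentially the paper's route: every $\chi(S)$ is nonzero, the real-valued characters in $(G^2)^\perp$ force $\chi(g)=1$ and hence $g\in G^2$, and part 2 uses the translate $(Sx,Tx^{-1})$ together with Lemma~\ref{lem-symmetric}(2). The differences are minor: you prove uniqueness of $g$ more elementarily, by comparing coefficients in $ShT=ST$, i.e.\ $\la G+(\mu-\la)h=\la G+(\mu-\la)e$, rather than through characters and Fourier inversion, and in part 2 you explicitly treat the swapped translate when $s=t$, which the paper leaves implicit.
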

\begin{proof}
For the uniqueness in part 1, it suffices to show that there exists a unique $g \in G$, such that $S^{(-1)}=Sg$.  Assume that there exist $g,h \in G$, such that $S^{(-1)}=Sg=Sh$. {Hence $Sgh^{-1} = S$}. Therefore, for each character $\chi \in \wh{G}$, we have $\chi(S)\chi(gh^{-1})=\chi(S)$. {For each $\chi \in \wh{G}\sm \{ \chi_0 \}$, $\chi(S)\chi(T)=\chi(ST)=\chi(\la G+(\mu-\la)e_G)=\mu-\la \ne 0$. Moreover, $\chi_0(S)=|S| \ne 0$. Thus, for each $\chi \in \wh{G}$, $\chi(S) \ne 0$ and therefore, $\chi(gh^{-1})=1=\chi(e_G)$ for each $\chi \in \wh{G}$. As $\chi$ ranges over $\wh{G}$, the character values $\chi(gh^{-1})$ and $\chi(e_G)$ always agree.} From the Fourier inversion formula, we have $gh^{-1}=e_G$ and thus  $g=h$. 

Define $G^2=\{y^2 : y \in G\}$. Then $G^2$ is a subgroup of $G$ such that $G/G^2$ is isomorphic to an elementary abelian $2$-group. For each $\chi \in (G^2)^\perp$, $\chi$ is of order at most $2$. Thus, for each $g \in G$, $\chi(g) \in \{\pm 1\}$ and $\chi(g^{-1})=\chi(g)$, which implies $\chi(S)=\chi(S^{(-1)})$. Consequently, $\chi(S)=\chi(S^{(-1)})=\chi(S)\chi(g)$ for each $\chi \in (G^2)^\perp$. {Recall that $\chi(S) \ne 0$ for each $\chi \in (G^2)^\perp \subset \wh{G}$, we have $\chi(g)=1$ for each $\chi \in (G^2)^\perp$.} By equation \eqref{eqn-indchar}, each $\wti{\chi} \in \wh{G/G^2}$ satisfies $\wti{\chi}(gG^2)=1$. {Note that $\wti{\chi}(gG^2)=1=\wti{\chi}(G^2)$ for each $\wti{\chi} \in \wh{G/G^2}$. As $\wti{\chi}$ ranges over $\wh{G/G^2}$, the character sums $\wti{\chi}(gG^2)$ and $\wti{\chi}(G^2)$ always agree,} by Fourier inversion formula, we have $gG^2=G^2$ and $g \in G^2$. Hence, there exists $x \in G$ such that $g=x^2$.  

For part 2, if $(S,T)$ has multiplier $-1$, then by part (1), there exists $x \in G$, such that $S^{(-1)}=Sx^2$. Therefore $(Sx)^{(-1)}=Sx$, which implies $Sx$ is symmetric. Thus, in view of Lemma \ref{lem-symmetric}(2), $(S,T)$ is translate-equivalent to a symmetric factorization $(Sx,x^{-1}T)$. Conversely, if $(S,T)$ is translate-equivalent to a symmetric $(\la,\mu)$-fold factorization of $G$, there exists $x \in G$, such that $(Sx)^{(-1)}=Sx$, which implies $S^{(-1)}=Sx^2$. In view of Remark \ref{rem-multi}, $(S,T)$ has multiplier $-1$.
\end{proof}

{
We note that the special case of Lemma \ref{lem-equivmulti} with $(\la,\mu)=(1,0)$, namely,  near-factorizations of a finite abelian group $G$, has been studied in \cite{CGHK}. Specifically, \cite[Corollary 1]{CGHK} proved that every near-factorization of a finite abelian group $G$ necessarily has multiplier $-1$. Furthermore, \cite[Proposition 2]{CGHK} established that every near-factorization of a finite abelian group $G$ is translate-equivalent to a symmetric near-factorization of $G$.}

{
For a $(\la,\mu)$-fold factorization $(S,T)$ of $G$, while the group ring element $ST$ is precisely known, it turns out to be useful and interesting to also examine the structure of the group ring element $S^{(-1)}T$, i.e., in the case of a multiplicative group, we look at quotients instead of products. Note that for two subsets $A$ and $B$ of a group $G$, we have for each $h \in G$,
$$
[AB]_h=|\{ \ell \in G : h\ell^{-1} \in A, \ell \in B \}|=|\{ \ell \in G : \ell \in A^{(-1)}h, \ell \in B \}|=|A^{(-1)}h \cap B|. 
$$
Therefore,
$$
S^{(-1)}T=\sum_{h \in G} [S^{(-1)}T]_h h=\sum_{h \in G} |Sh \cap T| h
$$
encodes the information about the size of the intersection $Sh \cap T$ for each $h \in G$. To illustrate this idea, we have the following example.
}

{
\begin{example}
\label{ex-dev}
Let $G=\Z_7= \{e_G,g,g^2, \dots , g^6\}$ (note that $G$ is written multiplicatively here) . Then $S=\{ e_G, g, g^2 \}$ and $T = \{e_G,g^2,g^3,g^5,g^6\}$ comprise a $(2,3)$-fold factorization of $G$. Given that
$$
S^{(-1)}T=\sum_{h \in G} |Sh \cap T|h=2G+g^5,
$$
we have 
$$
|Sh \cap T|=\begin{cases}
                       3 & \text{if $h=g^5$,} \\
                       2 & \text{otherwise.}
                   \end{cases} 
$$
Consequently, the group ring element $S^{(-1)}T-\la G=S^{(-1)}T-2G$ records the how much the overlaps between $Sh$ and $T$ deviate from the baseline $\la G=2G$.
\end{example}
}

{
Example \ref{ex-dev} motivates the following definition as a core concept in the study of $(\la,\mu)$-fold factorizations. 
}

\begin{definition}[overlap deviation]
\label{def-dev}
Let $(S,T)$ be an $\F[G,s,t,\la,\mu]$ with $\la \ne \mu$.  
Define the \emph{overlap deviation} of the factorization $(S,T)$ of $G$ to be 
\begin{equation}
\label{eqn-dev}
A=S^{(-1)}T-\la G=\sum_{h\in G} a(h) h \in \Z[G].
\end{equation}
\end{definition}

Let $(S,T)$ be an $\F[G,s,t,\la,\mu]$ with $\la \ne \mu$. Set $d=\mu-\la \ne 0$. Then $ST=\la G+de_G$.  The overlap deviation contains useful information concerning a $(\la,\mu)$-fold factorization of a finite abelian group. 

\begin{theorem}
\label{thm-dev}
Let $A=\sum_{h\in G} a(h) h$ be the overlap deviation of a $(\la,\mu)$-fold factorization of a finite abelian group $G$. Then
$$
AA^{(-1)}=d^2 e_G.
$$
Moreover,
\begin{align}
\sum_{h\in G} a(h)    &=d,\label{eqn-suma}\\
\sum_{h\in G} a(h)^2&=d^2, \quad \text{and} \label{eqn-squaresuma}\\
\sum_{h\in G} a(h)a(hx)&=0, \quad \text{for every $x\in G \sm \{ e_G \}$} \notag.
\end{align}
In particular, for every $h \in G$, it holds that $|a(h)| \le |d|$ and $|\{h\in G : a(h)\ne0\}|\le d^2$.
\end{theorem}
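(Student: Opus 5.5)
The plan is to compute $AA^{(-1)}$ directly in the commutative ring $\Z[G]$, using only $ST=\la G+de_G$ and the absorption rule $XG=\chi_0(X)G$ for every $X\in\Z[G]$. Write $A=S^{(-1)}T-\la G$, so that $A^{(-1)}=ST^{(-1)}-\la G$. Then
$$
AA^{(-1)}=S^{(-1)}TST^{(-1)}-\la G\bigl(S^{(-1)}T+ST^{(-1)}\bigr)+\la^2 nG .
$$

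The first term is handled by commutativity and the defining relation. Since $G$ is abelian,
$$
S^{(-1)}TST^{(-1)}=(ST)(ST)^{(-1)}=(\la G+de_G)(\la G+de_G).
$$
This expands to $\la^2nG+2\la dG+d^2e_G$. For the middle terms, $GS^{(-1)}T=stG$ and likewise $GST^{(-1)}=stG$. By \eqref{eqn-counting}, $st=\la(n-1)+\mu=\la n+d$. The middle contribution is therefore $-2\la(\la n+d)G$. Summing the three pieces, the coefficients of $G$ cancel:
$$
\la^2n+2\la d-2\la^2n-2\la d+\la^2n=0 .
$$
This leaves $AA^{(-1)}=d^2e_G$. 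This cancellation is the only step with real content, and I expect it to go through as written.

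The scalar identities follow from this. Applying $\chi_0$ to the definition of $A$ gives $\sum_h a(h)=st-\la n=d$, which is \eqref{eqn-suma}. The coefficient of $e_G$ in $AA^{(-1)}$ is $\sum_h a(h)^2$, so it equals $d^2$, which is \eqref{eqn-squaresuma}. The coefficient of $x^{-1}$ in $AA^{(-1)}$ is $\sum_h a(h)a(hx)$: the product $A\cdot A^{(-1)}$ pairs $h$ with $g^{-1}$, and $hg^{-1}=x^{-1}$ means $g=hx$. For $x\neq e_G$ this coefficient is $0$.

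For the final claims, each $a(h)$ is an integer, so $a(h)^2\le\sum_h a(h)^2=d^2$ gives $|a(h)|\le|d|$. Also every nonzero $a(h)$ satisfies $a(h)^2\ge1$, so the number of $h$ with $a(h)\neq0$ is at most $\sum_h a(h)^2=d^2$.
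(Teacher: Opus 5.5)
Your proof is correct and follows essentially the same route as the paper: you expand $AA^{(-1)}$ in the commutative ring $\Z[G]$, use $(ST)(ST)^{(-1)}=(\la G+de_G)^2$ together with $XG=\chi_0(X)G$ and $st=\la n+d$ to cancel the $G$-terms, and then read off the scalar identities from $\chi_0(A)$ and from the coefficients of $AA^{(-1)}$. Your explicit identification of the coefficient of $x^{-1}$ as $\sum_h a(h)a(hx)$ fills in a detail that the paper leaves to the reader.
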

\begin{proof}
{
Given that  $G$ is abelian, we have
\begin{align*}
AA^{(-1)}&=(S^{(-1)}T-\la G)(S^{(-1)}T-\la G)^{(-1)} \\
              &=(S^{(-1)}T-\la G)(T^{(-1)}S-\la G) \\
              &=S^{(-1)}TT^{(-1)}S-\la S^{(-1)}TG-\la GT^{(-1)}S+\la^2nG \\
              &=T^{(-1)}S^{(-1)}ST-2\la stG+\la^2nG \\
              &=(\la G+de_G)(\la G+de_G)+(\la^2n-2\la(\la n+d))G \\
              &=\la^2nG+2\la dG+d^2e_G-\la^2nG-2\la dG \\
              &=d^2e_G.
\end{align*}
The remaining equations follow by summing up all the coefficients of $A$ and comparing coefficients in $AA^{(-1)}=d^2 e_G$.}
\end{proof}

\begin{theorem}
\label{thm-transequivsym}
Let $(S,T)$ be a $(\la,\mu)$-factorization of a finite abelian group $G$ where $\la \ne \mu$. The following statements are equivalent.
\begin{enumerate}
\item $(S,T)$ is translate-equivalent to a symmetric factorization.
\item There exists a unique $g \in G$ such that 
$$
S^{(-1)}=Sg, \qquad T^{(-1)}=Tg^{-1}.
$$
\item The overlap deviation of $(S,T)$ is $A=dg$ for the same unique $g \in G$ as in part 2.
\item $d \mid a(h)$ for every $h \in G$. 
\item $a(h)d\ge0$ for every $h \in G$. 
\end{enumerate}
\end{theorem}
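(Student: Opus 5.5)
The plan is to prove the cycle of implications $1\Rightarrow 2\Rightarrow 3\Rightarrow 4\Rightarrow 5\Rightarrow 3\Rightarrow 2\Rightarrow 1$, using Lemma~\ref{lem-equivmulti} and Theorem~\ref{thm-dev} as the main tools. Throughout, $d=\mu-\la$ and $A=S^{(-1)}T-\la G$ is the overlap deviation.

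The equivalence $1\Leftrightarrow 2$ is essentially Lemma~\ref{lem-equivmulti}. By part~2 of that lemma, statement~1 holds if and only if $(S,T)$ has multiplier $-1$. By part~1 and Remark~\ref{rem-multi}, having multiplier $-1$ is equivalent to the existence of a unique $g$ with $S^{(-1)}=Sg$ and $T^{(-1)}=Tg^{-1}$.

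For $2\Rightarrow 3$, I substitute $S^{(-1)}=Sg$ into the definition of $A$. Since $G$ is abelian,
$$
A=S^{(-1)}T-\la G=gST-\la G=g(\la G+de_G)-\la G=dg .
$$
The implications $3\Rightarrow 4$ and $3\Rightarrow 5$ are then immediate, because the coefficients of $dg$ are $d$ and $0$.

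For $4\Rightarrow 3$, I write $a(h)=d\,b(h)$ with $b(h)\in\Z$. Equation~\eqref{eqn-squaresuma} gives $d^2\sum_h b(h)^2=d^2$. Hence exactly one $b(h)$ equals $\pm1$ and the others vanish. Equation~\eqref{eqn-suma} then forces the sign to be $+$, so $A=dg$ for some $g$.

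For $5\Rightarrow 3$, note that $a(h)d\ge 0$ means every $a(h)$ has the same sign as $d$ or is zero. Then
$$
\sum_h |a(h)|=\Bigl|\sum_h a(h)\Bigr|=|d| \quad\text{and}\quad \sum_h a(h)^2=d^2 .
$$
Since $\bigl(\sum_h|a(h)|\bigr)^2=\sum_h a(h)^2+\sum_{h\ne h'}|a(h)||a(h')|$, all cross terms vanish. So at most one $a(h)$ is nonzero, and it must equal $d$.

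The step I expect to require the most care is $3\Rightarrow 2$. From $S^{(-1)}T-\la G=dg$ I would get
$$
S^{(-1)}T=\la G+dg=g(\la G+de_G)=gST,
$$
so $(g^{-1}S^{(-1)})T=ST$. I want to apply Theorem~\ref{thm-unique} to conclude $g^{-1}S^{(-1)}=S$. To do so I first need $(g^{-1}S^{(-1)},T)$ to be a $(\la,\mu)$-fold factorization, which it is, since its product equals $ST$. Theorem~\ref{thm-unique} then gives $S^{(-1)}=Sg$. By Remark~\ref{rem-multi}, or by repeating the same argument with $T$, this yields $T^{(-1)}=Tg^{-1}$, and uniqueness of $g$ follows from Lemma~\ref{lem-equivmulti}. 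This also identifies the $g$ in part~3 with the $g$ in part~2.
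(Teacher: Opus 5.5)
Your proof is correct and follows essentially the same route as the paper: $1\Leftrightarrow 2$ via Lemma~\ref{lem-equivmulti}, $2\Leftrightarrow 3$ via Theorem~\ref{thm-unique} together with Remark~\ref{rem-multi}, and $4\Rightarrow 3$, $5\Rightarrow 3$ via the identities \eqref{eqn-suma} and \eqref{eqn-squaresuma} of Theorem~\ref{thm-dev}. Your cross-term argument for $5\Rightarrow 3$ is an unnormalized version of the paper's argument with $q(h)=a(h)/d\ge 0$.
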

\begin{proof}
The equivalence of part 1 and part 2  follows from part 2 of Lemma \ref{lem-equivmulti}. 

Next, from the definition of overlap deviation and Theorem \ref{thm-unique}, we have
$$
A=S^{(-1)}T-\la G=dg \Longleftrightarrow g^{-1}S^{(-1)}T=\la G+de_G \Longleftrightarrow S^{(-1)}=Sg.
$$
Therefore, in view of Remark \ref{rem-multi}, the equivalence of part 2 and part 3 is established.

It is easy to see that part 3 implies part 4. Conversely, assume part 4 holds. Then,  for every $h \in G$, we have
$$
a(h)=dq(h) \quad \text{for some} \quad q(h) \in \Z.
$$
Then equations \eqref{eqn-suma} and \eqref{eqn-squaresuma} give
$$
\sum_{h\in G}q(h)=1 \qquad \sum_{h\in G}q(h)^2=1.
$$
Therefore, there exists a unique $g \in G$, such that
$$
a(h)=\begin{cases}
          d & \text{if $h=g$,} \\
          0 & \text{otherwise.}
        \end{cases}
$$
Hence, $A=dg$, proving part 3. 

It is easy to see that part 3 implies part 5. Conversely, assume that part 5 holds.
For every $h \in G$, define $q(h)=\frac{a(h)}{d}$. Because $a(h)d \ge 0$, it holds that $q(h) \ge 0$. Equations \eqref{eqn-suma} and \eqref{eqn-squaresuma} give
$$
\sum_{h\in G}q(h)=1 \quad\text{and}\quad  \sum_{h\in G} q(h)^2=1.
$$
Therefore, there exists a unique $g \in G$ such that
$$
a(h)=\begin{cases}
          d & \text{if $h=g$,} \\
          0 & \text{otherwise.}
        \end{cases}
$$
Hence, $A=dg$, proving that part 3 holds.
\end{proof}

Corollary \ref{cor-diff1} below is a direct consequence of Theorem \ref{thm-transequivsym} concerning the special case of $d=\mu-\la=\pm 1$. This generalizes de Caen et al.'s result concerning $(1,0)$-fold factorizations \cite[Corollary 1]{CGHK}. We remark that Corollary \ref{cor-diff1} can also be derived following the same approach used in \cite[Theorems 1, 2, Corollary 1]{CGHK}.  

\begin{corollary}
\label{cor-diff1}
Let $G$ be a finite abelian group. Then every $\F[G,s,t,\la,\mu]$ 
with $\mu-\la=\pm 1$ is translate-equivalent to a symmetric 
$\F[G,s,t,\la,\mu]$.
\end{corollary}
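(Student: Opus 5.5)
The plan is to apply Theorem~\ref{thm-transequivsym} through its part~4, which becomes automatic when $d=\mu-\la=\pm 1$.

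Let $(S,T)$ be an $\F[G,s,t,\la,\mu]$ with $G$ a finite abelian group and $\mu-\la=\pm1$. Then $\la\neq\mu$, so $(S,T)$ has a well-defined overlap deviation
$$A=S^{(-1)}T-\la G=\sum_{h\in G}a(h)h,$$
as in Definition~\ref{def-dev}, and Theorem~\ref{thm-transequivsym} applies.

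The coefficients $a(h)=|Sh\cap T|-\la$ are integers. Since $d=\pm1$, we have $d\mid a(h)$ for every $h\in G$, which is exactly condition~4 of Theorem~\ref{thm-transequivsym}. That theorem states that condition~4 is equivalent to condition~1. Hence $(S,T)$ is translate-equivalent to a symmetric factorization.

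The factorization obtained this way is $(Sx,x^{-1}T)$, where $S^{(-1)}=Sx^2$; this comes from Lemma~\ref{lem-equivmulti}. It is again an $\F[G,s,t,\la,\mu]$ with the same parameters, because translating $S$ by $x$ and $T$ by $x^{-1}$ leaves the product $ST$ unchanged in the abelian group ring.

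There is no real obstacle here. The only point to check is that the hypotheses of Theorem~\ref{thm-transequivsym} hold, and this is immediate because $\la\neq\mu$ follows from $d=\pm1$.

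Alternatively, one could use Theorem~\ref{thm-dev} directly. Since $\sum_h a(h)^2=d^2=1$ and the $a(h)$ are integers, exactly one coefficient $a(g)$ equals $\pm1$ and all others vanish. Since $\sum_h a(h)=d$, that coefficient is $a(g)=d$, so $A=dg$, which is condition~3 of Theorem~\ref{thm-transequivsym}.
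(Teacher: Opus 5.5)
Your proof is correct and is essentially the paper's argument: the paper presents the corollary as a direct consequence of Theorem~\ref{thm-transequivsym}, and for $d=\pm1$ condition~4 (that $d\mid a(h)$ for all $h\in G$) holds trivially. Your alternative route through Theorem~\ref{thm-dev} is the paper's own $4\Rightarrow3$ step specialized to $d=\pm1$.
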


\section{Constructions of $(\la,\mu)$-fold factorizations}
\label{sec-construction}

{
In this section, we present some constructions of $(\la,\mu)$-fold factorizations with $\la \ne \mu$. We focus on a general construction in cyclic groups and a product construction in certain finite abelian groups.
We are writing group operations additively in this section.
}

\subsection{Constructions in cyclic groups}
\label{sec-cyclic}

We start with the following folklore construction of $(1,2)$-fold factorizations of $\zed_n$.

\begin{lemma}[Folklore construction of $(1,2)$-fold factorizations]
\label{lem-folklore1}
{Suppose $n+1=st$ with $1 < s,t <n+1$.} Let 
$$
S=\{0,1,\ldots,s-1\} \quad\text{and}\quad T=\{0,s,2s,\ldots,(t-1)s\}.
$$
Then $(S,T)$ is an $\F[\zed_n,s,t,1,2]$.
\end{lemma}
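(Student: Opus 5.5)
We will verify directly that every element of $\zed_n$ has the required number of representations $x+y$ with $x\in S$ and $y\in T$. The natural approach is to work first in the integers, where the sumset is transparent, and then reduce modulo $n$.

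\textbf{Step 1 (sums in $\zed$).} Regard $S=\{0,1,\ldots,s-1\}$ and $T=\{0,s,\ldots,(t-1)s\}$ as sets of integers. By the division algorithm (base-$s$ representation), each integer $m$ with $0\le m\le st-1$ is written uniquely as $m=i+js$ with $0\le i\le s-1$ and $0\le j\le t-1$. Hence the multiset $\{x+y : x\in S,\ y\in T\}$ computed in $\zed$ is exactly $\{0,1,\ldots,st-1\}=\{0,1,\ldots,n\}$, each value occurring once. Here we use $st=n+1$.

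\textbf{Step 2 (reduction modulo $n$).} Reducing modulo $n$, the integers $0,1,\ldots,n-1$ map bijectively onto $\zed_n$, and the extra integer $n$ maps to $0$. Consequently every nonzero residue $g\in\zed_n$ arises from exactly one pair $(x,y)$, while $0$ arises from exactly two pairs, namely those giving the integer sums $0$ and $n$. In group ring notation, $ST=(G-0)+2\cdot 0$, so $(S,T)$ is an $\F[\zed_n,s,t,1,2]$.

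\textbf{Step 3 (sets are honest subsets of the right sizes).} It remains to check that $S$ and $T$, viewed in $\zed_n$, have $s$ and $t$ distinct elements respectively.
\begin{itemize}
\item For $S$: since $t\ge 2$ we have $s\le (n+1)/2 \le n$, so the integers $0,\ldots,s-1$ are distinct modulo $n$.
\item For $T$: the largest element is $(t-1)s=n+1-s$, and $n+1-s<n$ because $s\ge 2$. Thus the integers $0,s,\ldots,(t-1)s$ lie in $[0,n-1]$ and are distinct modulo $n$.
\end{itemize}

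Strictly, Step 2 already implies distinctness: if two elements of either set coincided mod $n$, the total count $st=n+1$ of representations could not be distributed as shown. The explicit check is more transparent, however.

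The only delicate point is this size/distinctness check in Step 3, which is exactly where the hypothesis $1<s,t<n+1$ is used. Everything else is the uniqueness of the mixed-radix representation.
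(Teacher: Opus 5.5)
Your proof is correct and is essentially the paper's argument. The lemma itself is stated as folklore without proof, but its generalization, Theorem~\ref{thm-smallerlambda}, is proved in the same way: compute the integer sumset $\{0,1,\ldots,\la n\}$ and reduce it modulo $n$. The one small difference is how you show that $T$ has $t$ distinct residues. The paper uses $\gcd(s,n)=1$, while you note directly that every element of $T$ lies in $[0,n-1]$, which is more direct when $\la=1$.
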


The folklore construction in Lemma \ref{lem-folklore1} can be naturally extended, yielding a $(\la,\la+1)$-fold factorization of $\Z_n$ whenever $\la n+1$ admits an integer factorization $st$ with $1<s,t<n$. 

\begin{theorem}
\label{thm-smallerlambda}
Let $\la \ge 1$. {Suppose $\la n+1=st$ with $1 < s,t <n$}. Let 
$$
S=\{0,1,\ldots,s-1\} \quad\text{and}\quad T=\{0,s,2s,\ldots,(t-1)s\}.
$$
Then $(S,T)$ is an $\F[\zed_{n},s,t,\la,\la+1]$.
\end{theorem}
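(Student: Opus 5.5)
The plan is a direct count in the integers, followed by reduction modulo $n$. Every integer $m$ with $0 \le m \le st-1$ has a unique representation $m = i + js$ with $0 \le i \le s-1$ and $0 \le j \le t-1$, by division with remainder by $s$. Hence, as a multiset of integers, the sums $\{i+js\}$ are exactly $\{0,1,\ldots,st-1\}$, each occurring once. Since $st = \la n + 1$, this integer interval is $\{0,1,\ldots,\la n\}$.

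Before reducing, we check that $S$ and $T$ really are sets of size $s$ and $t$ in $\zed_n$. For $S$ this is clear, since $s<n$. For $T$ we need $js \not\equiv j's \pmod n$ for $0\le j<j'\le t-1$. This is not automatic, because $(t-1)s$ may exceed $n$. However, if $(j'-j)s \equiv 0 \pmod n$ for some $0<j'-j\le t-1$, then $n$ divides the sum $(j'-j)s$. That sum is itself of the form $0 + (j'-j)s$ and lies in the range $[0,\la n]$. So it would be a nonzero multiple of $n$ inside $\{0,\ldots,\la n\}$. This does not immediately contradict anything, so I would instead derive distinctness from the counting in the next paragraph. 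Alternatively, note that $\gcd(s,n)$ divides $st-\la n=1$, so $s$ is a unit in $\zed_n$. Then $js\equiv j's$ forces $j\equiv j' \pmod n$, and since $t<n$ we get $j=j'$. This is the cleaner route, and I would use it. Thus $|S|=s$ and $|T|=t$.

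Now reduce the multiset $\{0,1,\ldots,\la n\}$ modulo $n$. The residue $0$ arises from $0, n, 2n, \ldots, \la n$, giving $\la+1$ times. Each nonzero residue $r\in\{1,\ldots,n-1\}$ arises from $r, r+n, \ldots, r+(\la-1)n$, giving exactly $\la$ times, since $r+\la n > \la n$. Hence $S+T = \la(\zed_n - 0) + (\la+1)\cdot 0$ in $\zed[\zed_n]$, which is exactly equation~\eqref{eqn-def} with $\mu=\la+1$. Therefore $(S,T)$ is an $\F[\zed_n,s,t,\la,\la+1]$.

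The only subtle step is confirming that $T$ has no repeated elements modulo $n$, and the observation $\gcd(s,n)\mid 1$ handles it. Everything else is the standard mixed-radix (division with remainder) bijection.
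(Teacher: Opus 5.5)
Your proposal is correct and follows the paper's proof: you use $\gcd(s,n)\mid st-\la n=1$ to see that $T$ consists of $t$ distinct residues, identify the integer sums $i+js$ with $\{0,1,\ldots,\la n\}$ (each attained once), and reduce modulo $n$ to get $\la+1$ representations of $0$ and $\la$ of every nonzero element. The only difference is that you spell out details the paper leaves implicit, namely uniqueness via division with remainder and the explicit residue count; you can drop the abandoned detour before the $\gcd$ argument.
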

\begin{proof}
Note that $\gcd(s,n)=1$. From this it follows easily that $T$ is indeed a subset of $\Z_n$. Regarding $S$ and $T$ as subsets of the integers, the sum set $\{ x+y : x \in S, y \in T \}=\{ 0,1, \ldots, \la n \}$. Reducing this sum set modulo $n$, it covers the zero element of $\Z_n$ exactly $\la+1$ times and every nonzero element of $\Z_n$ exactly $\la$ times. Thus,  $(S,T)$ is an $\F[\zed_{n},s,t,\la,\la+1]$.
\end{proof}

Moreover, we have the following folklore construction of $(1,0)$-fold factorizations of $\zed_n$.

\begin{lemma}[Folklore construction of $(1,0)$-fold factorizations]
\label{lem-folklore2}
Suppose $n-1=st$ with $1 < s,t <n-1$. Let 
$$
S=\{0,1,2,\ldots,s-1\} \quad\text{and}\quad T=\{1,s+1,2s+1,\ldots,(t-1)s+1\}.
$$
Then $(S,T)$ is an $\F[\zed_{n},s,t,1,0]$.
\end{lemma}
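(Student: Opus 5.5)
We prove Lemma~\ref{lem-folklore2} in the same way as Theorem~\ref{thm-smallerlambda}: regard $S$ and $T$ as sets of integers, compute the integer sumset, and then reduce modulo $n$.

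First we check that $T$ has $t$ distinct elements in $\zed_n$. As integers, its elements are $1, s+1, \ldots, (t-1)s+1$. These all lie in the interval $[1,(t-1)s+1]$. Since $(t-1)s+1 = st-s+1 = n-s \le n-1$, they are distinct residues modulo $n$. The elements of $S$ are clearly distinct, because $s<n$.

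The key step is the sumset computation. Every integer $m$ with $1\le m\le st$ can be written uniquely as $m = is + j + 1$ with $0\le i\le t-1$ and $0\le j\le s-1$; this is division with remainder applied to $m-1$. Hence the integer sums $x+y$, for $x\in S$ and $y\in T$, are exactly
\[
\{1,2,\ldots,st\}=\{1,2,\ldots,n-1\},
\]
and each value occurs exactly once. In particular there are $st$ sums, and no two coincide.

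We now reduce modulo $n$. The integers $1,\ldots,n-1$ are precisely the nonzero residues of $\zed_n$, each appearing once, and $0$ never appears. So every nonzero element of $\zed_n$ is represented exactly once as $x+y$, and $0$ is represented zero times. In group ring terms,
\[
ST=(G-e)+0\cdot e,
\]
so $(S,T)$ is an $\F[\zed_n,s,t,1,0]$.

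The only step needing care is confirming that the integer sums never wrap past $n$. This holds because $st=n-1<n$, so the whole sumset already lies in $[1,n-1]$ before any reduction.
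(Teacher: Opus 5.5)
Your proof is correct and is essentially the paper's argument. The paper states this lemma without proof, but its proof of Theorem~\ref{thm-largerlambda}, the $\lambda$-fold generalization, uses the same integer sumset computation, $S+T=\{1,\ldots,\lambda n-1\}$, followed by reduction modulo $n$; your interval check that the elements of $T$ are distinct mod $n$ is a valid shortcut for $\lambda=1$, where the paper instead relies on $\gcd(s,n)=1$.
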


We can similarly extend the folklore construction in Lemma \ref{lem-folklore2}, yielding a $(\la,\la-1)$-fold factorization of  $\Z_n$ whenever $\la n-1$ admits an integer factorization $st$ with $1<s,t<n$. 

\begin{theorem}
\label{thm-largerlambda}
{Suppose $\la n-1=st$ with $1 < s,t <n$.} Let 
$$
S=\{0,1,2,\ldots,s-1\} \quad\text{and}\quad T=\{1,s+1,2s+1,\ldots,(t-1)s+1\}.
$$
Then $(S,T)$ is an $\F[\zed_{n},s,t,\la,\la-1]$.
\end{theorem}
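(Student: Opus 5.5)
The plan is to mirror the proof of Theorem~\ref{thm-smallerlambda}: view $S$ and $T$ first as sets of integers, compute their sum set as a multiset of integers, and then reduce modulo $n$.

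\textbf{Step 1: the factors are genuine subsets.} Since $st=\la n-1$, we have $st\equiv -1 \pmod n$, so $\gcd(s,n)=1$. The elements of $T$ are $1+js$ for $0\le j\le t-1$. Two of these coincide modulo $n$ only if $n \mid (j-j')s$, which forces $n\mid j-j'$. This is impossible, because $|j-j'|<t<n$. Hence $|T|=t$. Trivially $|S|=s$, because $s<n$.

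\textbf{Step 2: the integer sum set.} Every integer $m$ with $1\le m\le st$ can be written uniquely as $m=1+js+i$ with $0\le j\le t-1$ and $0\le i\le s-1$; this is just division with remainder of $m-1$ by $s$. So, as integers,
\[
\{x+y : x\in S,\ y\in T\}=\{1,2,\ldots,st\}=\{1,2,\ldots,\la n-1\},
\]
with each value attained exactly once.

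\textbf{Step 3: counting residues.} We reduce $\{1,\ldots,\la n-1\}$ modulo $n$.
\begin{itemize}
\item A nonzero residue $r\in\{1,\ldots,n-1\}$ arises from $r, r+n, \ldots, r+(\la-1)n$. All of these lie in $[1,\la n-1]$, so $r$ is hit exactly $\la$ times.
\item The residue $0$ arises from $n,2n,\ldots,(\la-1)n$, so it is hit exactly $\la-1$ times.
\end{itemize}
Therefore $ST=\la(G-e)+(\la-1)e$ in $\Z[\Z_n]$, which shows that $(S,T)$ is an $\F[\zed_n,s,t,\la,\la-1]$.

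The only delicate point is Step 1. There one must use the coprimality of $s$ and $n$, which comes from $st\equiv -1\pmod n$, together with $t<n$ to guarantee that $T$ has $t$ distinct elements. Step 2 is the mixed-radix observation, and it carries the whole argument. The case $\la=1$ recovers Lemma~\ref{lem-folklore2}.
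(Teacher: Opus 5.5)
Your proof is correct and follows the paper's own argument exactly. Both use $\gcd(s,n)=1$ to see that $T$ consists of $t$ distinct residues, both note that the integer sumset is $\{1,\ldots,\lambda n-1\}$ with each value represented once, and both count how often each residue modulo $n$ is hit. Your version spells out two details the paper leaves implicit: uniqueness of representations via division with remainder, and the explicit residue counts $\lambda$ and $\lambda-1$.
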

\begin{proof}
Because $\gcd(s,n)=1$,  $T$ is indeed a subset of $\Z_n$. Regarding $S$ and $T$ as subsets of the integers, the sum set $\{ x+y : x \in S, y \in T \}=\{ 1, 2, \ldots, \la n-1 \}$. Reducing this sum set modulo $n$, it covers the zero element of $\Z_n$ exactly $\la-1$ times and every nonzero element of $\Z_n$ exactly $\la$ times. Thus,  $(S,T)$ is an $\F[\zed_{n},s,t,\la,\la-1]$.
\end{proof}

Very intriguingly, all {known} basic $(\la,\mu)$-fold factorizations of cyclic groups have $\la-\mu=\pm 1$. In \cite{DB56}, De Bruijn proposed an elegant mixed-radix construction of $(1,1)$-fold factorizations of $\Z_n$. This construction has been further extended by Sakuma and Shinohara to construct $(1,0)$- and $(1,2)$-fold factorizations of $\Z_n$ \cite[p. 392]{SS13}. Following this spirit, {we have the following intuitive description of a construction of factorizations based on mixed-radix representations of integers. We note that De Bruijn's construction \cite{DB56} can be viewed as a special case of the following mixed-radix construction.}

Let $r \ge 2$. Set $m_i \geq 2$ for $1 \leq i \leq r$. Suppose that $\prod m_i = \la n + 1$. View an $r$-tuple $(x_1, \dots , x_r)$ as the mixed-radix representation of an integer with radices $m_1, m_2, \dots, m_r$. Thus $(x_1, x_2, \ldots , x_r)$ is the representation of the integer 
$$ 
x_1 + x_2m_1 + x_3m_1m_2 + \: \cdots \: + x_rm_1m_2 \cdots m_{r-1}.
$$
Let 
$$
\mathcal{T} = \{ (x_1, \dots , x_r) : 0 \leq x_i \leq m_i -1\text{ for all } i, 1\leq i \leq r\}.
$$
Clearly, $\mathcal{T} = \{ 0, \dots , \la n \}$. Therefore $\mathcal{T} \bmod n$ consists of $\la$ copies of $\Z_n \sm \{0\}$ and $\la + 1$ copies of $0$.

Let $\es \ne A \subseteq \{1, 2, \dots , r\}$ and let $\es \ne B =  \{1, \dots , r\} \setminus A$. Construct two sets of integers: 
$$
S^* = \{  (x_1, \dots , x_r) : x_i = 0 \text{ for each } i \in A\}
$$
and
$$
T^* = \{  (x_1, \dots , x_r) : x_i = 0 \text{ for each } i \in B\}.
$$
Then the sumset $S^*+T^*$ is equal to $\mathcal{T}$. Define $S = S^* \bmod n$ and $T = T^* \bmod n$. If the modulo $n$ reduction is injective on both $S^*$ and $T^*$, then $(S,T)$ is a $(\la,\la+1)$-fold factorization of $\Z_n$. {With a slight modification, the above construction also applies to the case where $\prod m_i = \la n-1$}.

If either $A$ or $B$ contains two consecutive integers, say $a$ and $a+1$, then the corresponding radices, $m_a$ and $m_{a+1}$, {can be replaced by their product $m_am_{a+1}$ without altering the construction.} Hence, we may assume that both $A$ and $B$ are alternating subsets of $\{1,2,\ldots,r\}$. This observation yields the following mixed-radix construction, {where we handle both the cases $\mu = \la+1$ and $\mu = \la-1$.}

\begin{theorem}[Mixed-radix construction for $(\la,\la\pm1)$-fold factorizations]
\label{thm-mixedradix}
Let $\la$ and $n$ be positive integers, let $\eps\in\{1,-1\}$, and let $r \ge 2$. Suppose that, for $1 \le i \le r$, we have $m_i \ge 2$ and
$$
\prod_{i=1}^{r}m_i=\la n+\eps.
$$
Set $c_{\eps}=\frac{1-\eps}{2}$, $m_0=1$, and for $0 \le j \le r$, let $M_j=\prod_{i=0}^{j}m_i$. Construct two sets of integers: 
\begin{align*}
  S^*&=\Bigg\{ \sum_{\substack{0\le i\le r-1\\ i\ \mathrm{even}}} x_iM_i : 0 \le x_i \le m_{i+1}-1, 0 \le i \le r-1 \Bigg\} \quad \text{and}\\
  T^*&=\Bigg\{ c_{\eps}+\sum_{\substack{0\le i\le r-1\\ i\ \mathrm{odd}}} x_iM_i : 0\le x_i\le m_{i+1}-1, 0 \le i \le r-1 \Bigg\}.
\end{align*}
Let $S=S^* \bmod n$ and $T=T^* \bmod n$. If the modulo $n$ reduction is injective on both $S^*$ and $T^*$, then
$$
ST=\la(\Z_n-e_{\Z_n})+(\la+\eps)e_{\Z_n}.
$$
Hence $(S,T)$ is an $\F[\zed_n,s,t,\la,\la+\eps]$,
where
$$
s=\prod_{\substack{0\le i\le r-1\\ i\ \mathrm{even}}}m_{i+1}
\qquad\text{and}\qquad
t=\prod_{\substack{0\le i\le r-1\\ i\ \mathrm{odd}}}m_{i+1}.
$$
\end{theorem}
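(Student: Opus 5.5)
The plan is to prove that, as multisets of integers, the sumset $S^*+T^*$ equals $\{c_\eps, c_\eps+1,\ldots,c_\eps+\la n+\eps-1\}$, each integer occurring exactly once. Then we reduce modulo $n$, exactly as in the proofs of Theorems~\ref{thm-smallerlambda} and \ref{thm-largerlambda}.

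The first step is the mixed-radix fact. Write $N=\prod_{i=1}^r m_i=M_r$. Every integer $z$ with $0\le z\le N-1$ has a unique representation $z=\sum_{i=0}^{r-1}x_iM_i$ with $0\le x_i\le m_{i+1}-1$. Existence comes from repeated division with remainder: $x_0=z \bmod m_1$, then $x_1$ is $\lfloor z/M_1\rfloor \bmod m_2$, and so on, using $M_{i+1}=M_im_{i+1}$. Uniqueness follows because the map from digit tuples to integers is surjective onto $\{0,\ldots,N-1\}$ and both sets have $N$ elements.

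Next, split each tuple $(x_0,\ldots,x_{r-1})$ into its even-indexed part and its odd-indexed part. This gives a bijection from $\{\text{even tuples}\}\times\{\text{odd tuples}\}$ to all tuples. Under it, the pair $(u,v)$ with $u\in S^*$ and $v-c_\eps\in T^*-c_\eps$ maps to the integer $u+(v-c_\eps)$. By the uniqueness of the representation, the elements of $S^*$ are pairwise distinct integers, and so are the elements of $T^*$. We get $|S^*|=s$ and $|T^*|=t$ with $s,t$ as stated, $st=N$, and every integer in $\{c_\eps,\ldots,c_\eps+N-1\}$ is written as $u+v$ with $u\in S^*$, $v\in T^*$ in exactly one way. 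In group ring language over $\Z$, this says $S^*T^*=\sum_{k=c_\eps}^{c_\eps+N-1}k$.

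The final step is the reduction modulo $n$. Since reduction is injective on $S^*$ and on $T^*$, the sets $S$ and $T$ have sizes $s$ and $t$. The reduction map $\Z\to\Z_n$ is a homomorphism, so it extends to a ring homomorphism $\Z[\Z]\to\Z[\Z_n]$, and the image of $S^*T^*$ is $ST$. So $ST$ counts how many of the $N$ consecutive integers in the window fall into each residue class. We handle the two signs separately.

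If $\eps=1$, then $c_\eps=0$ and the window is $\{0,\ldots,\la n\}$. This is $\la$ complete residue systems together with one extra $0$.

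If $\eps=-1$, then $c_\eps=1$ and the window is $\{1,\ldots,\la n-1\}$. This is $\la$ complete residue systems with one $0$ removed, namely the missing integer $\la n$.

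In both cases $ST=\la\Z_n+\eps e$, which equals $\la(\Z_n-e)+(\la+\eps)e$. This is the statement.

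I expect the only delicate point to be the bookkeeping in the uniqueness of the mixed-radix decomposition and in the even/odd splitting. In particular, the digit $x_i$ ranges up to $m_{i+1}-1$, with indices shifted because $m_0=1$, so $M_i=m_1\cdots m_i$. The rest is routine.
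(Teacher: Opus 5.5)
Your proposal is correct and follows essentially the same route as the paper. Both arguments use the unique mixed-radix representation of $\{0,\dots,M_r-1\}$ and split the digits into even- and odd-indexed parts, so that each integer in the window $\{c_\eps,\dots,c_\eps+M_r-1\}$ is written uniquely as $u+v$ with $u\in S^*$, $v\in T^*$; residues modulo $n$ are then counted separately for $\eps=\pm1$. You are slightly more explicit than the paper in two places: that distinct digit tuples give distinct integers (so $|S^*|=s$ and $|T^*|=t$), and that reduction modulo $n$ carries $S^*T^*$ to $ST$.
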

\begin{proof}
Note that $M_0=1$ and
$$
M_r-1=\la n+\eps-1.
$$
Every integer $a \in \{0,1,\dots,M_r-1\}$ has a unique mixed-radix representation
$$
a=x_0M_0+x_1M_1+\cdots+x_{r-1}M_{r-1}, \qquad 0 \le x_i \le m_{i+1}-1 \text{ for all }i.
$$
By the definition of $S^*$ and $T^*$, every integer in the set 
$$
\{c_{\eps},c_{\eps}+1,\dots,c_{\eps}+M_r-1\}
$$
has a unique representation as $u+v$ with $u\in S^*$ and $v\in T^*$. Since
$$
\{c_{\eps},c_{\eps}+1,\dots,c_{\eps}+M_r-1\}
=
\begin{cases}
\{0,1,\dots,\la n\},&\text{if } \eps=1,\\
\{1,2,\dots,\la n-1\},&\text{if } \eps=-1,
\end{cases}
$$
every nonzero element of $\Z_n$ occurs exactly $\la$ times after the modulo $n$ reduction, while $e_{\Z_n}$ occurs exactly $\la+\eps$ times. If the modulo $n$ reduction is injective on both $S^*$ and $T^*$, then $S$ and $T$ are two subsets of $\Z_n$ satisfying
$$
ST=\la(\Z_n-e_{\Z_n})+(\la+\eps)e_{\Z_n},
$$
with
$$
|S|=\prod_{\substack{0\le i\le r-1\\ i\ \mathrm{even}}}m_{i+1}
\qquad\text{and}\qquad
|T|=\prod_{\substack{0\le i\le r-1\\ i\ \mathrm{odd}}}m_{i+1}.
$$
\end{proof}

\begin{remark}
\label{rmk-mixedradix}
\mbox{\quad}
\begin{enumerate}
\item {Applying $r=2$ and $\eps \in \{1,-1\}$ in Theorem \ref{thm-mixedradix}, we obtain Theorems \ref{thm-smallerlambda} and \ref{thm-largerlambda}.}
\item The mixed-radix construction encompasses a variety of known constructions for binary sequence pairs with two-level correlation (see \cite[Section 4]{LX}).
\item A slightly more restrictive variant of the mixed-radix construction appears in \cite[Section 1]{SS13} under the name \emph{DBNS factorization}, where it is attributed to earlier work of De~Bruijn \cite{DB56}.
\end{enumerate}
\end{remark}

As we shall see, the mixed-radix construction offers a flexible framework to generate $(\la,\la \pm 1)$-fold factorizations of cyclic groups. As a preparation, we introduce the following invariant to test equivalence of $(\la,\mu)$-fold factorizations of cyclic groups.

\begin{definition}
For a subset $A \subseteq \Z_n$, a divisor $d \mid n$ and $r \in \Z_d$, we define
$$
C_{A,r}(d)=\bigl|\{ a\in A : a\equiv r \pmod d \} \bigr|
$$
to be the number of elements of $A$ in the residue class $r$ modulo $d$. Define the multiset
$$
\mathcal C_A(d)=\{ C_{A,r}(d) :  r\in \Z_d \}.
$$
\end{definition}

{
The following proposition shows that $\mathcal{C}_A(d)$ serves as an invariant for equivalence of $(\lambda,\mu)$-fold factorizations of cyclic groups.
}

\begin{proposition}
\label{prop-C}
Let $(S,T)$ and $(S',T')$ be two $(\la,\mu)$-fold factorizations of $\Z_n$ having size $(s,t)$ and let $d \mid n$.
\begin{enumerate}
\item If $s \ne t$, then $(S,T)$ and $(S',T')$ are inequivalent if $\mathcal C_S(d)\neq\mathcal C_{S'}(d)$ or $\mathcal C_T(d)\neq\mathcal C_{T'}(d)$, for some divisor $d$ of $n$. 
\item If $s=t$, then $(S,T)$ and $(S',T')$ are inequivalent if $\{\mathcal C_S(d),\mathcal C_T(d) \} \ne \{ \mathcal C_{S'}(d),\mathcal C_{T'}(d) \}$, for some divisor $d$ of $n$. 
\end{enumerate}
\end{proposition}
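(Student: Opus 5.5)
We argue by contraposition: we show that if $(S,T)$ and $(S',T')$ are equivalent, then the stated multisets agree for every divisor $d$ of $n$. The whole proof rests on one invariance property of $\mathcal C_A(d)$, established first.

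\textbf{Key claim.} For every subset $A\subseteq \Z_n$, every $f\in\Aut(\Z_n)$, every $g\in\Z_n$ and every $d\mid n$, we have
$$
\mathcal C_{f(A)+g}(d)=\mathcal C_A(d).
$$
We treat automorphisms and translations separately.

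\emph{Automorphisms.} Every $f\in\Aut(\Z_n)$ has the form $x\mapsto ux$ with $\gcd(u,n)=1$. Since $d\mid n$, reduction modulo $d$ is well defined on $\Z_n$. Because $\gcd(u,d)=1$, the map $r\mapsto ur$ is a permutation of $\Z_d$, and $a\equiv r \pmod d$ if and only if $ua\equiv ur \pmod d$. Hence $C_{f(A),ur}(d)=C_{A,r}(d)$ for every $r\in\Z_d$, so the multiset is unchanged.

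\emph{Translations.} Translating by $g$ sends the residue class $r$ to $r+g \bmod d$, which is also a permutation of $\Z_d$. Thus $C_{A+g,r+g}(d)=C_{A,r}(d)$, and again the multiset is unchanged. The only point needing care is that reduction modulo $d$ is well defined on $\Z_n$, which is exactly why we require $d\mid n$.

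\textbf{Part 1 ($s\ne t$).} Suppose the two factorizations are equivalent. By Definition~\ref{def-equivalence}, written additively, there are $f\in\Aut(\Z_n)$ and $g\in\Z_n$ with $S'=f(S)+g$ and $T'=f(T)-g$. The key claim then gives $\mathcal C_{S'}(d)=\mathcal C_S(d)$ and $\mathcal C_{T'}(d)=\mathcal C_T(d)$ for every $d\mid n$. Contrapositively, if either equality fails for some $d\mid n$, the factorizations are inequivalent.

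\textbf{Part 2 ($s=t$).} Here equivalence means $(S,T)$ is equivalent, in the sense of Part 1, to either $(S',T')$ or $(T',S')$.

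In the first case, the key claim gives $\mathcal C_{S'}(d)=\mathcal C_S(d)$ and $\mathcal C_{T'}(d)=\mathcal C_T(d)$.

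In the second case, write the relations as $T'=f(S)+g$ and $S'=f(T)-g$. The key claim gives $\mathcal C_{T'}(d)=\mathcal C_S(d)$ and $\mathcal C_{S'}(d)=\mathcal C_T(d)$.

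In both cases the unordered pairs agree:
$$
\{\mathcal C_S(d),\mathcal C_T(d)\}=\{\mathcal C_{S'}(d),\mathcal C_{T'}(d)\}.
$$
Contrapositively, if the unordered pairs differ for some $d\mid n$, the factorizations are inequivalent.

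There is no real obstacle in this argument. The only nontrivial point is the observation in the key claim that a unit modulo $n$ is also a unit modulo $d$, so that multiplication by it permutes the residue classes modulo $d$.
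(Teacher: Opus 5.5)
Your proposal is correct and follows essentially the same route as the paper. Both arguments rest on the invariance of $\mathcal C_A(d)$ under $A\mapsto uA+g$ with $u\in\Z_n^*$, and both apply contraposition to each factor (and to the swapped pair when $s=t$). The differences are cosmetic: you prove the invariance that the paper calls clear from the definition, and you argue from Definition~\ref{def-equivalence} rather than Lemma~\ref{lem-equivalence}(1), which makes it more transparent that both $\mathcal C_S(d)=\mathcal C_{S'}(d)$ and $\mathcal C_T(d)=\mathcal C_{T'}(d)$ hold.
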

\begin{proof}
{
For a subset $A \subseteq \Z_n$ and $A'=uA+g$ with $u \in\Z_n^*$ and $g \in \Z_n$, it is clear from the definition that $\mathcal C_{A'}(d)=\mathcal C_A(d)$ for every divisor $d$ of $n$. 
}

For part 1, in view of Lemma \ref{lem-equivalence}(1), $(S,T)$ and $(S',T')$ are equivalent if and only if there exist $u \in\Z_n^*$ and $g \in \Z_n$ such that $S' = uS+g$ or $T' = uT-g$. Therefore, $(S,T)$ and $(S',T')$ are equivalent only if $\mathcal C_S(d)=\mathcal C_{S'}(d)$ and $\mathcal C_T(d)=\mathcal C_{T'}(d)$.

Part 2 can be proved similarly.
\end{proof}

Combining Theorem \ref{thm-mixedradix} and Proposition \ref{prop-C}, we have the following example.

\begin{example}\label{ex-Z27}
We apply Theorem \ref{thm-mixedradix} with $n=27$, $\la=3$, $\epsilon=-1$ and the  three sequences  $(m_i)_{i=1}^r$ listed below. We obtain three $\F[\zed_{27},8,10,3,2]$, denoted $(S_i,T_i)$, for $1 \le i \le 3$:
\begin{enumerate}
\item $r=5$, $(m_i)_{i=1}^5=(2,2,2,5,2)$, $S_1=\{0,1,4,5,13,14,17,18\}$,  $T_1=\{0,1,3,6,8,9,11,17,19,25\}$
\item $r=4$, $(m_i)_{i=1}^4=(2,2,4,5)$, $S_2=\{0,1,4,5,8,9,12,13\}$,  $T_2=\{1,3,6,8,11,13,17,19,22,24\}$
\item $r=3$, $(m_i)_{i=1}^3=(2,10,4)$, $S_3=\{0,1,6,7,13,14,20,21\}$, $T_3=\{1,3,5,7,9,11,13,15,17,19\}$.
\end{enumerate}
The following multisets are pairwise distinct:
$$
\mathcal C_{S_1}(9)=\{ 0,0,0,0,1,1,2,2,2 \}, \; \mathcal C_{S_2}(9)=\{ 0,0,0,1,1,1,1,2,2 \}, \; \mathcal C_{S_3}(9)=\{ 0,1,1,1,1,1,1,1,1 \}.
$$
Hence, the factorizations $(S_i,T_i)$, $1 \le i \le 3$, are pairwise inequivalent. 
\end{example}

\subsection{$(\la,\mu)$-fold zero-offset factorizations}
\label{sec-zerooffset}

Inspired by previous research on $(\la,\mu)$-fold factorizations of $\zed_n$ conducted in the context of binary sequence pairs with two-level correlation, we propose the concept of $(\la,\mu)$-fold zero-offset factorizations and present a product construction of such factorizations.

\begin{definition}
Let $A \subseteq \Z_n$. The \emph{binary characteristic vector} of $A$ is $\bx^{A}=(x_i^{A})_{0 \le i \le n-1} \in \{0,1\}^{n}$, where
$$
x_i^{A}=
\begin{cases}
1,& i\in A,\\
0,& i\notin A.
\end{cases}
$$
\end{definition}

Thus every subset of $\Z_n$ determines a unique binary vector of length $n$. Conversely, every binary vector of length $n$ is the characteristic vector of a unique subset of $\Z_n$.

\begin{definition}
Let $\bx=(x_i)_{0 \le i \le n-1}$ and $\by=(y_i)_{0 \le i \le n-1}$ be binary sequences with length $n$. The \emph{correlation} of $\bx$ and $\by$ at shift $0 \le j \le n-1$ is
$$
\theta_{\bx,\by}(j)=\sum_{i=0}^{n-1}(-1)^{x_i+y_{i+j}},
$$
where $i+j$ is regarded as modulo $n$. A binary sequence pair $(\bx,\by)$ has \emph{two-level correlation} if there exist distinct constants $a$ and $b$ such that
$$
\theta_{\bx,\by}(j)=
\begin{cases}
a & \text{if $j=0$,}\\
b & \text{if $1 \le j \le n-1$.}
\end{cases}
$$
\end{definition}

Binary sequence pairs with two-level correlation have been studied in several research papers, including \cite{JS08,JS10,LX,PXA}. They are equivalent to $(\la,\mu)$-fold factorizations of $\zed_n$.

\begin{lemma}
The pair $(S,T)$ is an $\F[\zed_{n},s,t,\lambda,\mu]$ if and only if
\begin{equation}
\label{eqn-translation}
\theta_{{\bx}^{S},\bx^{T^{(-1)}}}(j)=
\begin{cases}
 n-2s-2t+4\mu & \text{if $j=0$,} \\
 n-2s-2t+4\la & \text{if $1 \le j \le n-1$},
\end{cases}
\end{equation}
where $j$ is regarded modulo $n$. Consequently, $(S,T)$ is an 
$\F[\zed_{n},s,t,\lambda,\mu]$
with $\la \ne \mu$ 
if and only if the binary sequence pair $(\bx^{S},\bx^{T^{(-1)}})$ has two-level correlation $n-2s-2t+4\mu$ at shift $0$ and $n-2s-2t+4\la$ elsewhere.
\end{lemma}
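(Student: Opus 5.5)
We will compute the correlation $\theta_{\bx^{S},\bx^{U}}(j)$ directly, where $U=T^{(-1)}$, by counting agreements and disagreements between the two characteristic vectors at each shift. Since each term $(-1)^{x_i+y_{i+j}}$ equals $1$ when $x_i=y_{i+j}$ and $-1$ otherwise, we have
\[
\theta_{\bx^S,\bx^U}(j)=n-2\,\bigl|\{i : x^S_i\ne x^U_{i+j}\}\bigr| .
\]
The number of disagreeing positions is the size of the symmetric difference of $S$ and $U-j$, namely $|S|+|U-j|-2|S\cap(U-j)| = s+t-2|S\cap(U-j)|$. Hence
\[
\theta_{\bx^S,\bx^U}(j)=n-2s-2t+4\,|S\cap (U-j)|.
\]

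Next we identify $|S\cap(U-j)|$ with a coefficient of $ST$. An element $i\in S\cap(U-j)$ means $i\in S$ and $i+j\in U=-T$, i.e.\ $-(i+j)\in T$. So the pairs $(x,y)=(i,-i-j)\in S\times T$ with $x+y=-j$ are in bijection with $S\cap(U-j)$. Therefore
\[
|S\cap(U-j)|=[ST]_{-j}.
\]
This is also the additive version of the identity $[AB]_h=|A^{(-1)}h\cap B|$ recorded before Example~\ref{ex-dev}, and it is the one step where the sign conventions need care.

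Combining the two displays, $\theta(j)=n-2s-2t+4[ST]_{-j}$. Since $j\mapsto -j$ fixes $0$ and permutes the nonzero residues, $(S,T)$ satisfies $ST=\la(\Z_n-0)+\mu\cdot 0$ exactly when $[ST]_{-j}$ equals $\mu$ at $j=0$ and $\la$ for $j\neq 0$. By the formula above, this holds exactly when \eqref{eqn-translation} holds, which gives both directions of the first claim.

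For the final statement, note that the two values $n-2s-2t+4\mu$ and $n-2s-2t+4\la$ are distinct precisely when $\la\ne\mu$. So the equivalence specializes directly to two-level correlation with the stated levels.
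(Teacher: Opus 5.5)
Your proposal is correct and takes essentially the same route as the paper. Both arguments derive $\theta(j)=n-2s-2t+4[ST]_{-j}$ by counting and identify the overlap count with $[ST]_{-j}$ through the pairs $(i,-i-j)\in S\times T$. The only cosmetic difference is that you count disagreements as a symmetric difference, while the paper tallies the four patterns $(1,1)$, $(1,0)$, $(0,1)$, $(0,0)$ separately.
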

\begin{proof}
The entries $x_i^S$ and $x_{i+j}^{T^{(-1)}}$ are both equal to $1$ precisely when $i\in S$ and $i+j\in T^{(-1)}$. Let $c_j$ denote the number of such indices. Hence, 
$$
c_j
=\bigl|\{i\in S:i+j\in T^{(-1)}\}\bigr|
=\bigl|\{i\in S:-i-j\in T\}\bigr|
=\bigl|\{(a,b)\in S\times T:a+b=-j\}\bigr|
=[ST]_{-j}.
$$
To compute the correlation, observe that there are $s$ indices with $x_i^S=1$ and $t$ indices with $x_{i+j}^{T^{(-1)}}=1$,
because translation by $j$ permutes $\mathbb Z_n$. Thus the numbers of indices at which the two entries are $(1,1)$,
$(1,0)$, $(0,1)$, and $(0,0)$ are, respectively, $c_j$, $s-c_j$, $t-c_j$, and $n-s-t+c_j$. The summand $(-1)^{x_i^S+x_{i+j}^{T^{(-1)}}}$ equals $+1$ when the entries agree and $-1$ when they differ. Therefore,
$$
\begin{aligned}
\theta_{\mathbf{x}^S,\mathbf{x}^{T^{(-1)}}}(j)
&=c_j+(n-s-t+c_j)-(s-c_j)-(t-c_j)\\
&=n-2s-2t+4c_j\\
&=n-2s-2t+4[ST]_{-j}.
\end{aligned}
$$
Hence, the equivalence follows from the definitions.
\end{proof}

From the sequence design perspective, binary sequence pairs with two-level correlation and, in particular, sequences having zero correlation at all nonzero shifts, are desirable. In view of equation \eqref{eqn-translation}, such binary sequence pairs satisfy $n-2s-2t+4\la=0$. Below, we define the counterpart of such binary sequence pairs from the viewpoint of $(\la,\mu)$-fold factorizations of finite groups, where the ambient group is not necessarily cyclic.

\begin{definition}
An $\F[G,s,t,\lambda,\mu]$ is called \emph{zero-offset} if
$$
n-2s-2t+4\la=0.
$$
\end{definition}

By definition, only groups of even order can admit zero-offset factorizations. Regarding basic zero-offset factorizations, we have the following tightened restrictions on their parameters. 

\begin{lemma}
\label{lem-zerooffset}
Let $(S,T)$ be a basic zero-offset $\F[G,s,t,\lambda,\mu]$ with $|G| = n$.
Then the following hold.
\begin{enumerate}
\item $(n-2s)(n-2t)=4(\mu-\la)$.
\item $2\la+1 \le s \le t  \le \frac{n}{2}-1$.
\item $\la < \mu$.
\item If $\mu-\la=1$, then $(S,T)$ is necessarily an 
$\F[G,2m-1,2m-1,m-1,m]$, where 
 $|G|=4m$ and $m \ge 2$.
\end{enumerate} 
\end{lemma}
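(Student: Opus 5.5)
Part 1 is pure algebra. The zero-offset condition gives $4\la = 2s+2t-n$. Substituting into the counting relation \eqref{eqn-counting}, which can be written as $st=\la n+(\mu-\la)$, we get
\[
4st = (2s+2t-n)n + 4(\mu-\la).
\]
Rearranging gives $4st-2sn-2tn+n^2 = 4(\mu-\la)$, and the left side factors as $(n-2s)(n-2t)$.

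Part 3 follows from part 1 and the basic conditions \eqref{eqn-basic}. Since $s\le t\le n/2$, both factors $n-2s$ and $n-2t$ are nonnegative, so $\mu-\la\ge 0$. Because $\la\ne\mu$, the product is nonzero, hence $\mu>\la$. In particular $n-2t\ge 1$, and since $n$ is even (forced by the zero-offset condition $n=2s+2t-4\la$), in fact $n-2t\ge 2$, giving $t\le n/2-1$.

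For the lower bound in part 2, I plan to use $\mu\le s-1$ from \eqref{eqn-basic} together with part 1. Put $a=(n-2s)/2$ and $b=(n-2t)/2$; these are positive integers with $a\ge b\ge 1$ and $ab=\mu-\la$. Then
\[
s = \frac{n}{2}-a \quad\text{and}\quad 4\la = 2s+2t-n = n-2a-2b,
\]
so $\la = n/4-(a+b)/2$, which gives $s = 2\la + b$. Hence $s\ge 2\la+1$ because $b\ge 1$. Combining this with $s\le t\le n/2-1$ settles part 2. (I do not actually need $\mu\le s-1$ here; the identity $s=2\la+b$ is the key step and it does all the work.)

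For part 4, take $\mu-\la=1$. Then $ab=1$, so $a=b=1$, and therefore $s=t=n/2-1$ and $s=2\la+1$. Writing $\la=m-1$ gives $s=t=2m-1$, $n=2s+2=4m$, and $\mu=m$. For the condition $m\ge2$: if $m=1$ then $\la=0$ and $s=1$, but \eqref{eqn-basic} requires $1\le\mu\le s-1=0$, which is impossible. So $m\ge 2$.

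I expect the only delicate step to be spotting the substitution $s=2\la+b$; everything else is direct computation from part 1 and \eqref{eqn-basic}.
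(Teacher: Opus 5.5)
Your proof is correct and follows essentially the same argument as the paper. The paper gets part 1 by applying the principal character to the group ring identity $(G-2S)(G-2T)=4(\mu-\la)e$, which is the same algebra as your manipulation of the counting relation. Your identity $s=2\la+b$ is the paper's $2s-4\la=n-2t\ge 2$. Your exclusion of $m=1$ via $1\le\mu\le s-1=0$ is a slightly more explicit justification than the paper's appeal to $\la=0$.
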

\begin{proof}
For part 1, since $(S,T)$ is zero-offset, in the group ring $\Z[G]$, we have
\begin{align*}
(G-2S)(G-2T)&=G^2-2SG-2GT+4ST\\
                     &=nG-2sG-2tG+4(\la(G-e)+\mu e)\\
                     &=(n-2s-2t+4\la)G+4(\mu-\la)e \\
                     &=4(\mu-\la)e.
\end{align*}
{Applying the principal character on both sides of the above equation} yields $(n-2s)(n-2t)=4(\mu-\la)$. 

For part 2, since $(S,T)$ is basic, we have $1 \le s \le t \le \frac{n}{2}$ and $\la \ne \mu$. If $t=n/2$, then part 1 would force $\la=\mu$, which is impossible. Hence, $t<\frac{n}{2}$. Furthermore, $2s-4\la=n-2t \ge 2$ leads to $s \ge 2\la+1$. Consequently, we have $2\la+1 \le s \le t \le \frac{n}{2}-1$. 

Part 3 follows from parts 1 and 2. 

For part 4, if $\mu-\la=1$, then by part 1, we have $(n-2s)(n-2t)=4$. Since $n-2s$ and $n-2t$ are both even, we have $s=t=\frac{n}{2}-1$. Therefore, $\la=\frac{2s+2t-n}{4}=\frac{n}{4}-1$ and $\mu=\frac{n}{4}$. Since $4 \mid n$, we can write $n=4m$ for $m \ge 1$, which leads to $(s,t,\la,\mu)=(2m-1,2m-1,m-1,m)$. When $m \ge 2$, it is straightforward to verify that the corresponding factorization is basic. When $m=1$, we have $\la=0$, which implies the corresponding factorization is not basic.  
\end{proof}

Below, we describe some examples of zero-offset factorizations.

\begin{example}
\label{ex-seed}
Applying the mixed-radix construction with $r=2$ and $m_1=m_2=2m-1$ generates a basic zero-offset 
$\F[\Z_{4m},2m-1,2m-1,m-1,m]$ for every $m \ge 2$. Note that when $m=1$, the pair $(S,T)$ with $S=T=\{0\} \subset \Z_4$ lead to an $\F[\Z_{4},1,1,0,1]$, which is zero-offset but not basic. Constructions of zero-offset $\F[\Z_{4m},2m-1,2m-1,m-1,m]$ have been presented in \cite[Theorem 6]{JS08},  \cite[Construction A]{JS10}, \cite[Theorem 4]{LX} in the setting of binary sequence pairs with two-level correlation. 
\end{example}

\begin{example}
\label{ex-DS}
Let $D$ be a $k$-element subset of a multiplicative group $G$ of order $n$ with identity $e$. The set $D$ is called a \emph{$(n,k,\la)$-difference set} in $G$ if every nonidentity element $g \in G$ has exactly $\la$ representations of the form
$$
g=d_1d_2^{-1}, \qquad d_1,d_2\in D.
$$
Equivalently,
$$
DD^{(-1)}=\la(G-e)+ke \quad \text{in $\Z[G]$}.
$$
By definition, $(D,D^{(-1)})$ is a 
$\F[G,k,k,\lambda,k]$, which is not basic in view of \eqref{eqn-basic}, because $\mu = s$. Given an $(n,k,\la)$-difference set $D$ in a group $G$, $(D,D^{(-1)})$ is a zero-offset factorization of $G$ if and only if $n=4(k-\la)$. Such difference sets are called \emph{Hadamard difference sets}; they must have parameters $(n,k,\la)=(4N^2,2N^2 \pm N,N^2 \pm N)$ with $N \ge 1$. See \cite[Chapter VI, \S 12]{BJL} for more information on Hadamard difference sets. 

If $D$ is a Hadamard difference set in $G$, then so is the complement $\ol{D}=G \sm D$. Note that the two associated  factorizations $(D,D^{(-1)})$ and $(\ol{D},\ol{D}^{(-1)})$ of $G$ are identical up to complementation. For the purpose of constructing zero-offset factorizations, it suffices to consider Hadamard difference sets with parameters $(4N^2,2N^2-N,N^2-N)$ for some $N \ge 1$. 

A group admitting a Hadamard difference set is called \emph{Hadamard group}. All \emph{known} abelian Hadamard groups are of the following form \cite[Chapter VI, Theorem 12.15]{BJL}:
$$
G=G_2 \times (\Z_{3^{a_1}})^2 \times (\Z_{3^{a_2}})^2 \times \cdots \times (\Z_{3^{a_u}})^2 \times (\Z_{p_1})^{4b_1} \times (\Z_{p_2})^{4b_2} \times \cdots \times (\Z_{p_v})^{4b_v},
$$
where 
\begin{enumerate}
\item $G_2$ is the Sylow $2$-subgroup of nonzero square order satisfying $\exp(G_2) \le 2 \sqrt{|G_2|}$ and $|G_2| \ge 4$
\item $a_i  \ge 0$  for every $1 \le i \le u$, and
\item for each $1 \le i \le v$, $p_i$ is an arbitrary odd prime greater than $3$ and $b_i  \ge 0$.
\end{enumerate}
To sum up, for every  Hadamard group $G$ of order $4N^2$, there exists a zero-offset 
$\F[G,2N^2-N,2N^2-N,N^2-N,2N^2-N]$. 
\end{example}

\subsection{A product construction of $(\la,\mu)$-fold zero-offset factorizations}
\label{sec-prodcon}

In this subsection, we describe a product construction of zero-offset factorizations, which has some similarities to the product construction presented in \cite[Theorem 2.13]{KLS}. 

\medskip

\begin{theorem}[Product construction of zero-offset factorizations]
\label{thm-prod}
For $1 \le i \le 2$, let $G_i$ be a finite group of order $n_i$ with identity $e_i$, and suppose $(S_i,T_i)$ is a zero-offset $\F[G_i,s_i,t_i,\la_i,\mu_i]$.
 Set $\ol{S_i}=G_i\sm S_i$ and $\ol{T_i}=G_i\sm T_i$. In the direct product group $G_1\times G_2$, define
\begin{align*}
 S_3&=(S_1\times\ol{S_2}) \mathbin{\dot\cup} (\ol{S_1}\times S_2),\\
 T_3&=(T_1\times\ol{T_2})  \mathbin{\dot\cup} (\ol{T_1}\times T_2).
\end{align*}
Then $(S_3,T_3)$ is a zero-offset 
$\F[G_1\times G_2,s_3,t_3,\la_3,\mu_3]$, 
 where
\begin{align*}
 s_3&=s_1(n_2-s_2)+s_2(n_1-s_1)\\
 t_3&=t_1(n_2-t_2)+t_2(n_1-t_1)\\
 \la_3&=s_1t_2+s_2t_1-4\la_1\la_2,\\
 \mu_3&=\la_3
 +4(\mu_1-\la_1)(\mu_2-\la_2).
\end{align*}
If both $(S_1,T_1)$ and $(S_2,T_2)$ are basic, then so is $(S_3,T_3)$.
\end{theorem}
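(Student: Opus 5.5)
The plan is to pass to $\pm1$-valued group ring elements, where the zero-offset condition becomes multiplicative. For $i=1,2$ put $X_i=G_i-2S_i$ and $Y_i=G_i-2T_i$, and write $d_i=\mu_i-\la_i$. The computation in the proof of Lemma~\ref{lem-zerooffset}(1) uses only \eqref{eqn-def} and the zero-offset condition, so it holds for possibly nonabelian $G_i$ and gives $X_iY_i=4d_i e_i$ in $\Z[G_i]$.

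\textbf{Step 1: the product structure.} The key observation is that $(g_1,g_2)\in S_3$ if and only if exactly one of $g_1\in S_1$, $g_2\in S_2$ holds. Hence the $\pm1$ coefficient of $(g_1,g_2)$ in $G-2S_3$ (with $G=G_1\times G_2$) is the product of the coefficients of $g_1$ in $X_1$ and of $g_2$ in $X_2$. Viewing $\Z[G_1]$ and $\Z[G_2]$ as commuting subrings of $\Z[G]$, this says $G-2S_3=X_1X_2$, and likewise $G-2T_3=Y_1Y_2$. Since elements supported on different direct factors commute, we get
$$(G-2S_3)(G-2T_3)=X_1Y_1\,X_2Y_2=16d_1d_2\,e.$$
The sizes $s_3,t_3$ follow by direct counting, since the union defining each of $S_3,T_3$ is disjoint.

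\textbf{Step 2: reading off the factorization.} Expanding the left side gives $(n-2s_3-2t_3)G+4S_3T_3$, where $n=n_1n_2$. Therefore
$$S_3T_3=\tfrac{2s_3+2t_3-n}{4}\,G+4d_1d_2\,e.$$
Because $S_3T_3$ has integer coefficients, $\la_3:=\tfrac{2s_3+2t_3-n}{4}$ is an integer. This exhibits an $\F[G,s_3,t_3,\la_3,\la_3+4d_1d_2]$, and it is zero-offset by the very definition of $\la_3$. To match the stated formula for $\la_3$, substitute $n_i=2s_i+2t_i-4\la_i$ (the zero-offset condition for each $i$) into $2s_3+2t_3-n_1n_2$ and simplify; this is routine algebra.

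\textbf{Step 3: basicness.} Set $a_i=n_i-2s_i$ and $b_i=n_i-2t_i$. By Lemma~\ref{lem-zerooffset}, $a_i\ge b_i\ge 2$, and $a_ib_i=4d_i>0$. Applying the principal character to $G-2S_3=X_1X_2$ gives $n-2s_3=a_1a_2$, and similarly $n-2t_3=b_1b_2$. Hence $n-2s_3\ge n-2t_3\ge 4$, which yields $s_3\le t_3<n/2$. Also $\mu_3-\la_3=4d_1d_2>0$, so $\la_3\ne\mu_3$. For $\mu_3\ge1$, note that $s_3\ge 1$ and $\la_3\ge0$ since $S_3T_3$ has nonnegative coefficients, so $\mu_3\ge 4d_1d_2\ge 1$.

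\textbf{The main obstacle: $\mu_3\le s_3-1$.} In terms of $a_i,b_i$ this reads
$$s_3-\mu_3=\tfrac14\bigl(n-a_1a_2+b_1b_2-a_1a_2b_1b_2\bigr)\ge1.$$
The basic hypothesis for each factor gives the analogous inequality $n_i\ge a_i(1+b_i)-b_i+4$. I would multiply these two lower bounds for $n_1$ and $n_2$ and expand. The leading part is $a_1a_2(1+b_1)(1+b_2)$, which already contains $a_1a_2b_1b_2+a_1a_2$. The remaining cross terms must then be shown to dominate $4-b_1b_2$; here one uses $a_i\ge b_i\ge2$ and absorbs the $-b_i$ terms into the $+4$'s and the $a_ib_j$ terms. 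If this direct expansion gets messy, the fallback is a short case check on the smallest values $b_i=2$, where the inequality is tightest.
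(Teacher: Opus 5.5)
Your proof of the factorization identity takes a different and shorter route than the paper. The paper computes $S_iT_i$, $S_i\ol{T_i}$, $\ol{S_i}T_i$ and $\ol{S_i}\,\ol{T_i}$ via Lemma~\ref{lem-complement} and the zero-offset condition. It then expands $S_3T_3$ as four products in $\Z[G_1\times G_2]$, and checks $n_1n_2-2s_3-2t_3+4\la_3=0$ by a separate, lengthy computation. Your $\pm1$ substitution makes the construction multiplicative: $G-2S_3=X_1X_2$ and $G-2T_3=Y_1Y_2$. As a result, $S_3T_3$ falls out at once, integrality of $\la_3$ and the zero-offset property are automatic, and $n-2s_3=a_1a_2$, $n-2t_3=b_1b_2$ come for free. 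The only residual algebra is matching $\la_3$, and it is indeed short. With $s_i=(n_i-a_i)/2$, $t_i=(n_i-b_i)/2$ and $4\la_i=n_i-a_i-b_i$, one gets $4(s_1t_2+s_2t_1-4\la_1\la_2)=n_1n_2-a_1a_2-b_1b_2$. Your checks of $s_3\le t_3<n/2$, $\la_3\ne\mu_3$ and $\mu_3\ge 1$ are correct.

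What is missing is the step you flag yourself, $\mu_3\le s_3-1$. You correctly reduce it to $n_1n_2\ge a_1a_2-b_1b_2+a_1a_2b_1b_2+4$, given $n_i\ge L_i:=a_i(1+b_i)-b_i+4$. But you never show that the cross terms dominate, and checking $b_i=2$ proves nothing unless you know the minimum occurs there. The inequality is true, and your plan does close. Since $L_i\ge b_i^2+4>0$, we have $n_1n_2\ge L_1L_2$, and
\[
L_1L_2-(a_1a_2-b_1b_2+a_1a_2b_1b_2+4)=a_1a_2(b_1+b_2)+a_1(1+b_1)(4-b_2)+a_2(1+b_2)(4-b_1)+2b_1b_2-4b_1-4b_2+12 .
\]
This expression is affine in each $a_i$, with positive slope on the region $a_1\ge b_1$, $a_2\ge b_2$. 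The $a_1$-slope is at least $b_2^2-b_2+4b_1+4$, and symmetrically for $a_2$. Hence the expression is at least its value at $a_i=b_i$, namely $4b_1^2+4b_2^2+12$.

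The paper avoids this algebra with a counting argument you could adopt directly. Note that $s_3-\mu_3=[S_3\ol{T_3}]_{(e_1,e_2)}=|S_3\cap\ol{T_3}^{(-1)}|$. This set contains $(S_1\cap\ol{T_1}^{(-1)})\times(\ol{S_2}\cap\ol{T_2}^{(-1)})$, which has size $(s_1-\mu_1)(n_2-s_2-t_2+\mu_2)$. That size is at least $1$ because both factors are basic and $s_2+t_2\le n_2-2$. Either way, this step must be supplied for the basicness claim to be proved.
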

\begin{proof}
The fact that $S_3$ and $T_3$ are both subsets of $G_1 \times G_2$ and the expressions of $s_3$, $t_3$ follow easily. For $1 \le i \le 2$, set
$\de_i=\mu_i-\la_i$. Given that
$$
 S_iT_i=\la_iG_i+\de_i e_i.
$$
In view of Lemma \ref{lem-complement} and $(S_i,T_i)$ being zero-offset, we obtain
\begin{align*}
 S_i\ol{T_i}&=(s_i-\la_i)G_i-\de_i e_i,\\
 \ol{S_i}T_i&=(t_i-\la_i)G_i-\de_i e_i,\\
 \ol{S_i}\,\ol{T_i}&=(n_i-s_i-t_i+\la_i)G_i+\de_i e_i=(s_i+t_i-3\la_i)G_i+\de_i e_i.
\end{align*}
Therefore
\begin{align*}
  &S_1T_1\times \ol{S_2}\,\ol{T_2}\\
&=(\la_1G_1+\de_1 e_1) \times ((s_2+t_2-3\la_2)G_2+\de_2 e_2) \\
&=\la_1(s_2+t_2-3\la_2) (G_1 \times G_2)+\de_2\la_1 (G_1 \times \{e_2\})+\de_1(s_2+t_2-3\la_2)(\{e_1\} \times G_2)+\de_1\de_2(e_1,e_2)
\end{align*}
\begin{align*}
  &S_1\ol{T_1}\times \ol{S_2}T_2\\
&=((s_1-\la_1)G_1-\de_1e_1) \times ((t_2-\la_2)G_2-\de_2e_2)\\
&=(s_1-\la_1)(t_2-\la_2)(G_1\times G_2)-\de_2(s_1-\la_1)(G_1\times\{e_2\})-\de_1(t_2-\la_2)(\{e_1\}\times G_2)+\de_1\de_2(e_1,e_2)
\end{align*}
\begin{align*}
  &\ol{S_1}T_1\times S_2\ol{T_2} \\
&=((t_1-\la_1)G_1-\de_1e_1)\times((s_2-\la_2)G_2-\de_2e_2)\\
&=(t_1-\la_1)(s_2-\la_2) (G_1\times G_2)-\de_2(t_1-\la_1)(G_1\times\{e_2\})-\de_1(s_2-\la_2)(\{e_1\}\times G_2)+\de_1\de_2(e_1,e_2)
\end{align*}
\begin{align*}
  &\ol{S_1}\,\ol{T_1}\times S_2T_2 \\
&=((s_1+t_1-3\la_1)G_1+\de_1e_1)\times(\la_2G_2+\de_2e_2)\\
&=\la_2(s_1+t_1-3\la_1) (G_1\times G_2)+\de_2(s_1+t_1-3\la_1)(G_1\times\{e_2\})+\de_1\la_2(\{e_1\}\times G_2)+\de_1\de_2(e_1,e_2).
\end{align*}
Combining the above four terms and simplifying, we have
\begin{align*}
S_3T_3&=S_1T_1\times \ol{S_2}\,\ol{T_2}+S_1\ol{T_1}\times \ol{S_2}T_2+\ol{S_1}T_1\times S_2\ol{T_2}+\ol{S_1}\,\ol{T_1}\times S_2T_2 \\
            &=(s_1t_2+s_2t_1-4\la_1\la_2)(G_1\times G_2)+4\de_1\de_2 (e_1,e_2) \\
            &= \la_3(G_1\times G_2-(e_1,e_2))+\mu_3(e_1,e_2).
\end{align*}
Together with 
\begin{align*}
   &n_1n_2-2s_3-2t_3+4\la_3 \\
 &=n_1n_2-2(s_1(n_2-s_2)+(n_1-s_1)s_2)-2(t_1(n_2-t_2)+(n_1-t_1)t_2)+4(s_1t_2+s_2t_1-4\la_1\la_2) \\
 &=(2s_1+2t_1-4\la_1)(2s_2+2t_2-4\la_2)-2(s_1(s_2+2t_2-4\la_2)+s_2(s_1+2t_1-4\la_1)+t_1(2s_2+t_2-4\la_2)\\
   &+t_2(2s_1+t_1-4\la_1))+4(s_1t_2+s_2t_1-4\la_1\la_2) \\
 &=4(s_1+t_1)(s_2+t_2)-8\la_2(s_1+t_1)-8\la_1(s_2+t_2)-2(2s_1s_2+4s_1t_2+4s_2t_1+2t_1t_2)\\
   &+8\la_2(s_1+t_1)+8\la_1(s_2+t_2)+4s_1t_2+4s_2t_1,  \\
 &=0
\end{align*}
we see that $(S_3,T_3)$ is a zero-offset $\F[G_1\times G_2,s_3,t_3,\la_3,\mu_3]$. It remains to show that if both $(S_1,T_1)$ and $(S_2,T_2)$ are basic, then so is $(S_3,T_3)$. Note that
\begin{align*}
 n_1n_2-2t_3&=(n_1-2t_1)(n_2-2t_2) \ge0 \\
 t_3-s_3&=(t_1-s_1)(n_2-2t_2)+(t_2-s_2)(n_1-2s_1)\ge0
\end{align*}
Therefore, $1 \le s_3 \le t_3 \le \frac{n_1n_2}{2}$. In order to show that $1 \le \mu_3 \le s_3-1$, it suffices to show that $0< \mu_3<s_3$. As $(S_1,T_1)$ and $(S_2,T_2)$ are basic zero-offset factorizations, by Lemma \ref{lem-zerooffset}(3), we have $\la_1 < \mu_1$ and $\la_2 < \mu_2$. Together with $\mu_3-\la_3=4(\mu_1-\la_1)(\mu_2-\la_2)$, we have $\mu_3>\la_3 \ge 0$. Recall that $T_3=(T_1\times\ol{T_2})\mathbin{\dot\cup}(\ol{T_1}\times T_2)$ and $\ol{T_3}=(T_1\times T_2)\mathbin{\dot\cup}(\ol{T_1}\times\ol{T_2})$. Since $\mu_3=[S_3T_3]_{(e_1,e_2)}$, we have $s_3-\mu_3=[S_3\ol{T_3}]_{(e_1,e_2)}$. On the other hand,
\begin{align*}
[S_3\ol{T_3}]_{(e_1,e_2)}&= |(S_1 \times \ol{S_2}) \cap (T_1^{(-1)} \times T_2^{(-1)})|+|(S_1 \times \ol{S_2}) \cap (\ol{T_1^{(-1)}} \times \ol{T_2^{(-1)}})|\\
                                           &+|(\ol{S_1} \times S_2) \cap (T_1^{(-1)} \times T_2^{(-1)})|+|(\ol{S_1} \times S_2) \cap (\ol{T_1^{(-1)}} \times \ol{T_2^{(-1)}})| \\
&=\mu_1(t_2-\mu_2)+(s_1-\mu_1)(n_2-s_2-t_2+\mu_2)+(t_1-\mu_1)\mu_2+(n_1-s_1-t_1+\mu_1)(s_2-\mu_2) \\
&>0.
\end{align*}
Thus, $\mu_3<s_3$. Consequently, $(S_3,T_3)$ is a 
basic factorization. 
\end{proof}

\begin{remark}
A product construction for certain $\la$-fold near-factorizations 
was given in \cite[Section 2.2]{KLS}. Specifically, an 
$\F[G,s,t,\lambda,0]$ where $|G| = n$ and $t=\frac{n-1}{2}$ is called a \emph{half-set near-factorization} of $G$. A half-set near-factorization exists only if $n$ is odd, in which case it must be an
$\F[G,s,\frac{n-1}{2},\lambda,0]$, where $|G| = n$.
 Moreover, an $\F[G,s,t,\lambda,0]$ with $s \le t \le \frac{n-1}{2}$ is a half-set near-factorization if and only if $n-2s-2t+4\la=1$, where $n = |G|$. From this perspective, zero-offset factorizations can be regarded as the analogue of half-set near-factorizations in groups of even order. In \cite[Theorem 2.13]{KLS},  a product construction is presented that combines half-set near-factorizations of two finite groups $G_1$ and $G_2$ into a half-set near-factorization of the direct product group $G_1 \times G_2$. Theorem~\ref{thm-prod} provides a product construction of  zero-offset factorizations following the same spirit of \cite[Theorem 2.13]{KLS}.
\end{remark}

Combining the zero-offset factorizations in Example \ref{ex-seed}, Example \ref{ex-DS}, and the product construction in Theorem \ref{thm-prod}, we have the following construction of zero-offset basic factorizations.

\begin{theorem}
Let $m_i$, $1 \le i \le r$, be positive integers. Let $M_r=\prod_{i=1}^r m_i$ and 
$$
G_r=\Z_{4m_1}\times\Z_{4m_2}\times\cdots\times\Z_{4m_r}.
$$
Let $G$ be a Hadamard group of order $4N^2$ with $N \ge 1$. 
\begin{enumerate}
\item There exists a zero-offset $\F[G_r,s_r,t_r,\lambda_r,\mu_r]$, say $(S_r,T_r)$, where 
\begin{align*}
|G_r| &= n_r= 4^rM_r \\
s_r&= t_r=2^{2r-1}M_r-2^{r-1}  \\
\la_r&= 4^{r-1}M_r-2^{r-1} \\
\mu_r&= 4^{r-1}M_r+4^{r-1}-2^{r-1}.
\end{align*}
Moreover, $(S_r,T_r)$ is basic if and only if $M_r>1$.
\item 
There exists a zero-offset $\F[G_r \times G,s_r',t_r',\lambda_r',\mu_r']$, say $(S_r',T_r')$, where 
\begin{align*}
|G_r \times G| &= n_r'= 4^{r+1}N^2M_r \\
s_r'&= t_r'=2^{2r+1}N^2M_r-2^{r}N  \\
\la_r'&= 4^{r}N^2M_r-2^{r}N \\
\mu_r'&= 4^{r}N^2M_r+4^{r}N^2-2^{r}N.
\end{align*}
Moreover, $(S_r',T_r')$ is basic if and only if $M_r>1$.
\end{enumerate}
\end{theorem}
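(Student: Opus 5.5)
We argue by induction on $r$, using Theorem~\ref{thm-prod} repeatedly with the seeds from Example~\ref{ex-seed} and Example~\ref{ex-DS}. For a single factor $\Z_{4m}$, Example~\ref{ex-seed} supplies a zero-offset $\F[\Z_{4m},2m-1,2m-1,m-1,m]$ for every $m\ge 1$ (for $m=1$ this is the non-basic $\F[\Z_4,1,1,0,1]$ with $S=T=\{0\}$). This gives the case $r=1$ of part 1, since $s_1=2M_1-1$, $\la_1=M_1-1$ and $\mu_1=M_1$ match the claimed formulas. For the inductive step we apply Theorem~\ref{thm-prod} with $G_1=G_{r-1}$ carrying $(S_{r-1},T_{r-1})$ and $G_2=\Z_{4m_r}$ carrying the seed. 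This yields a zero-offset factorization of $G_r$.

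The parameters come from the formulas in Theorem~\ref{thm-prod}. Write $n_1=4^{r-1}M_{r-1}$, $s_1=t_1=2^{2r-3}M_{r-1}-2^{r-2}$, $\la_1=4^{r-2}M_{r-1}-2^{r-2}$, $\de_1=\mu_1-\la_1=4^{r-2}$, and $n_2=4m_r$, $s_2=t_2=2m_r-1$, $\la_2=m_r-1$, $\de_2=1$. Then
\[
s_3=s_1(n_2-s_2)+s_2(n_1-s_1)=s_1(2m_r+1)+(2m_r-1)(n_1-s_1),
\]
and $\la_3=2s_1s_2-4\la_1\la_2$, while $\mu_3-\la_3=4\de_1\de_2=4^{r-1}$. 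Substituting and simplifying gives $2^{2r-1}M_r-2^{r-1}$ and $4^{r-1}M_r-2^{r-1}$; this is routine algebra, and the simplest check is via $n-2s=2^{r}$ (see below). Part 2 follows the same way, with one further application of Theorem~\ref{thm-prod} using $G_2=G$ carrying the Hadamard-difference-set factorization $\F[G,2N^2-N,2N^2-N,N^2-N,2N^2-N]$ of Example~\ref{ex-DS}. Here $\de=N^2$.

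A cleaner way to organize all of the parameter checks is to track $n-2s$ and $\de=\mu-\la$. In the product, $n_3-2s_3=(n_1-2s_1)(n_2-2s_2)$ (this identity appears in the proof of Theorem~\ref{thm-prod}) and $\de_3=4\de_1\de_2$. The seeds have $n-2s=2$, $\de=1$ for $\Z_{4m}$ and $n-2s=2N$, $\de=N^2$ for $G$. So after $r$ cyclic factors $n-2s=2^r$ and $\de=4^{r-1}$, and after also including $G$ we get $n-2s=2^{r+1}N$ and $\de=4^rN^2$. The zero-offset relation $\la=(4s-n)/4$ then recovers $\la$, and $\mu=\la+\de$ recovers $\mu$. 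This is how I would actually present the computation.

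The main obstacle is the ``basic iff $M_r>1$'' claim, because Theorem~\ref{thm-prod} only propagates basicness when \emph{both} inputs are basic. The $\Z_4$ seeds and the Hadamard seed $(\mu=s)$ are not basic. I would therefore verify \eqref{eqn-basic} directly from the closed forms rather than by induction.
\begin{itemize}
\item $s=t\le n/2$ holds trivially.
\item $\la\ne\mu$ holds because $\de>0$.
\item $\mu\ge1$ holds because $\mu\ge\de\ge1$.
\item For $\mu\le s-1$ in part 1, we need $4^{r-1}M_r+4^{r-1}-2^{r-1}\le 2^{2r-1}M_r-2^{r-1}-1$. This is equivalent to $4^{r-1}(M_r-1)\ge1$, i.e.\ $M_r>1$. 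When $M_r=1$ it gives $\mu=s$, which is not basic.
\item For $\mu\le s-1$ in part 2, the same computation gives $4^rN^2(M_r-1)\ge 1$, again equivalent to $M_r>1$.
\end{itemize}
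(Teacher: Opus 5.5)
Your proposal is correct and follows essentially the same route as the paper: induction on $r$ via Theorem~\ref{thm-prod} with the cyclic seeds of Example~\ref{ex-seed}, one further product with the Hadamard factorization of Example~\ref{ex-DS} for part 2, and basicness checked directly from the closed forms, where $\mu<s$ turns out to be equivalent to $M_r>1$. Your bookkeeping via the multiplicative invariants $n-2s$ and $\delta=\mu-\lambda$ is just a tidier organization of the parameter computation, which the paper carries out by direct substitution into the formulas of Theorem~\ref{thm-prod}.
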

\begin{proof}
For part 1, we prove the result by induction. The base case of $r=1$ follows from Example \ref{ex-seed}. 
Assume $(S_{r-1},T_{r-1})$  is a zero-offset 
$\F[G_{r-1},s_{r-1},t_{r-1},\lambda_{r-1},\mu_{r-1}]$ where $|G_{r-1}| = n_{r-1}$.
By Example \ref{ex-seed}, there exists a zero-offset 
$\F[\Z_{4m_r},2m_r-1,2m_r-1,m_r-1,m_r]$, say $(S,T)$.
Set 
\begin{align*}
 S_r&=(S_{r-1}\times\ol{S}) \mathbin{\dot\cup} (\ol{S_{r-1}}\times S),\\
 T_r&=(T_{r-1}\times\ol{T})  \mathbin{\dot\cup} (\ol{T_{r-1}}\times T).
\end{align*}
By Theorem \ref{thm-prod}, $(S_r,T_r)$ is a zero-offset 
$\F[G_r,s,t,\la,\mu]$
where $G_r=G_{r-1} \times Z_{4m_r}$, with
\begin{align*}
|G_r|&=4^{r-1}M_{r-1} \cdot 4m_r=4^rM_r=n_r \\
s&=s_{r-1}(4m_r-(2m_r-1))+(2m_r-1)(4^{r-1}M_{r-1}-s_{r-1})=2^{2r-1}M_r-2^{r-1}=s_r \\
t&=t_{r-1}(4m_r-(2m_r-1))+(2m_r-1)(4^{r-1}M_{r-1}-t_{r-1})=2^{2r-1}M_r-2^{r-1}=t_r \\
\la&=s_{r-1}(2m_r-1)+(2m_r-1)t_{r-1}-4 \la_{r-1}(m_r-1)=4^{r-1}M_r-2^{r-1}=\la_r \\
\mu&=\la+4(\mu_{r-1}-\la_{r-1})=4^{r-1}M_r+4^{r-1}-2^{r-1}=\mu_r .
\end{align*}
Finally, note that $1 \le s_r=t_r < \frac{n_r}{2}$ and $\la_r \ne \mu_r$. Then $(S_r,T_r)$ is basic if and only if $\mu_r<s_r$, which is equivalent to $M_r>1$. 

By applying the product construction to the zero-offset factorizations from part 1 and Example \ref{ex-DS}, we obtain part 2.  
\end{proof}


Finally, we describe an example of $(\la,\mu)$-factorization in $\Z_5^2$. This example is noteworthy as it is so far the only {known} basic $(\la,\mu)$-factorization of a noncyclic abelian group of odd order. 

\begin{example}
\label{ex-Z_5^2}
Let $\al$ be a primitive element of $\FF_{25}$ satisfying $\al^2+4\al+2=0$. Let $C_0=\langle \al^4 \rangle$ and for $0 \le i \le 3$, $C_i=\al^iC_0$. Set 
\begin{align*}
S_1=\{0\} \cup C_0, &\quad  T_1=\{0\} \cup C_2  \\
S_2=\{0\} \cup C_1, &\quad  T_2=\{0\} \cup C_3 
\end{align*}
Then $(S_1,T_1)$ and $(S_2,T_2)$ are both $\F[\Z_5^2,7,7,2,1]$. Moreover, a computation indicates that, up to equivalence, there is a unique $\F[\Z_5^2,7,7,2,1]$. 
\end{example}

\section{Some nonexistence results for  $(\la,\mu)$-fold factorizations} 
\label{sec-nonexistence}

In this section, $G$ is a finite abelian group of order $n$ and $(S,T)$ is an $\F[G,s,t,\la,\mu]$ with $\la \ne \mu$. Hence, $ST=\la G+de_G$ with $d=\mu-\la \ne 0$. Here, we use multiplicative notation for the group operation.

Recall that $G^2=\{y^2 : y \in G\}$ is a subgroup of $G$ and $G/G^2$ is isomorphic to an elementary abelian $2$-group. Let $G_2$ be the Sylow $2$-subgroup of $G$ with rank $r$. Then, $|G/G^2|=2^{r}$. Specifically, 
$$
\begin{cases}
 \text{$G^2 \lneq G$, $G/G^2$ is isomorphic to the elementary abelian $2$-group of order $2^r$} & \text{if $|G|$ is even}    \\
 \text{$G^2=G$, $G/G^2$ is a trivial group} & \text{if $|G|$ is odd.}              
\end{cases}
$$
Note that the subgroup $(G^2)^\perp=\{\chi \in \wh{G} : \text{$\chi$ is principal on $G^2$} \}$ consists precisely of characters of $G$ of order at most $2$. Therefore, for each $\chi \in (G^2)^\perp$, $\chi(g) \in \{ \pm 1\}$ for each $g \in G$. Set
$$
|(G^2)^\perp|=|G/G^2|=2^r \qquad \text{and} \qquad u=|G^2|=\frac{n}{2^r}.
$$
Let $\{ C_0=G^2, C_1, \dots, C_{2^r-1} \}$ denote the complete set of cosets of $G^2$ in $G$. For each $0 \le j \le 2^r-1$, define
$$
s_j=|S \cap C_j| \qquad \text{and} \qquad t_j=|T \cap C_j|,
$$
where $0 \le s_j,t_j \le u$, $\sum_{j=0}^{2^r-1} s_j=s$ and $\sum_{j=0}^{2^r-1} t_j=t$. Note that each character $\chi \in (G^2)^\perp \sm \{\chi_0\}$ has kernel $H=\ker \chi$ and
\begin{equation}
\label{eqn-realchival}
\chi(S)=|S \cap H|-|S \sm H|.
\end{equation}

Below, we derive some restrictions on the $(\la,\mu)$-fold factorization of finite abelian groups of even order. 

\begin{theorem}
\label{thm-divisibility}
Let $G$ be a finite abelian group of even order. Let $(S,T)$ be an $\F[G,s,t,\la,\mu]$ with $\la \ne \mu$. Then, for each character $\chi \in (G^2)^\perp \sm \{\chi_0\}$, the following hold.
\begin{enumerate}
\item $\chi(S)$ and $\chi(T)$ are nonzero integers satisfying $\chi(S)\chi(T)=d$. In particular, $\chi(S)$ and $\chi(T)$ are divisors of $d$.
\item $|\chi(S)| \le s$ and $\chi(S) \equiv s \pmod 2$, as well as $|\chi(T)| \le t$ and $\chi(T) \equiv t \pmod 2$.
\item If $s$ and $t$ are both even, then $4 \mid d$.
\end{enumerate}
\end{theorem}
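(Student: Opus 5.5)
The plan is to apply each real-valued character in $(G^2)^\perp\sm\{\chi_0\}$ to the defining group ring equation $ST=\la G+de_G$ and read off integrality, the product value, and parity. Fix $\chi\in(G^2)^\perp\sm\{\chi_0\}$. Since $\chi$ is nonprincipal, $\chi(G)=0$, so $\chi(S)\chi(T)=\chi(ST)=\la\chi(G)+d\chi(e_G)=d$. Because $\chi$ takes values in $\{\pm1\}$, equation \eqref{eqn-realchival} shows $\chi(S)=|S\cap H|-|S\sm H|$ is an integer, where $H=\ker\chi$, and likewise $\chi(T)\in\Z$. As $d\ne 0$, both values are nonzero, so each divides $d$. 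This gives part 1.

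For part 2, write $\chi(S)=|S\cap H|-|S\sm H|$ together with $s=|S\cap H|+|S\sm H|$. Then $|\chi(S)|\le s$ is immediate, and $s-\chi(S)=2|S\sm H|$ is even, so $\chi(S)\equiv s\pmod 2$. The same argument applies verbatim to $T$.

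For part 3, suppose $s$ and $t$ are even. By part 2, $\chi(S)$ and $\chi(T)$ are both even, so $4\mid\chi(S)\chi(T)=d$. The only point needing care is that some $\chi\in(G^2)^\perp\sm\{\chi_0\}$ actually exists. This holds because $|G|$ is even, so $G^2\lneq G$ and $|(G^2)^\perp|=|G/G^2|=2^r\ge 2$. I expect no real obstacle here; that existence check is the one step I would state explicitly.
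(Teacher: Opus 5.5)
Your proposal is correct and follows the paper's proof essentially step for step. You evaluate $ST=\la G+de_G$ at a $\{\pm1\}$-valued nonprincipal character, use $\chi(S)=s-2|S\sm H|$ for the bound and the parity, and combine parts 1 and 2 for part 3. Your explicit check that $(G^2)^\perp\sm\{\chi_0\}$ is nonempty (because $|G|$ even gives $G^2\lneq G$) is a point the paper leaves implicit, and part 3 genuinely relies on it.
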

\begin{proof}
Let $\chi \in (G^2)^\perp \sm \{\chi_0\}$. As $\chi$ is of order at most $2$, it takes values in $\{\pm 1\}$. Therefore, both $\chi(S)$ and $\chi(T)$ are integers. 

For part 1, we have $\chi(S)\chi(T)=\chi(ST)=\chi(\la G+de_G)=d \ne 0$, and the rest follows.

For part 2, let $H=\ker \chi$. By equation \eqref{eqn-realchival} together with $|S \cap H|+|S \sm H|=s$, we have
$$
\chi(S)=s-2|S \sm H|,
$$
whence $|\chi(S)| \le s$ and $\chi(S) \equiv s \pmod 2$. Similarly, $|\chi(T)| \le t$ and $\chi(T) \equiv t \pmod 2$.

Part 3 follows from parts 1 and 2. If $s$ and $t$ are both even, then $\chi(S)$ and $\chi(T)$ are both even, and therefore $4 \mid \chi(S)\chi(T)=d$.
\end{proof}

In part 3 of Theorem \ref{thm-divisibility}, if $4 \mid n$ and $s,t$ are both even, then $4 \mid st=\la n+d$ already implies $4 \mid d$. Hence, part 3 of Theorem \ref{thm-divisibility} provides new information exactly when $n \equiv 2 \pmod 4$.

By applying the Fourier inversion formula to the group of characters $(G^2)^\perp$, we have the following theorem.

\begin{theorem}
\label{thm-moment}
Let $G$ be a finite abelian group of even order. Let $|G/G^2|=2^{r}$. Let $(S,T)$ be an $\F[G,s,t,\la,\mu]$ with $\la \ne \mu$. Then the following hold.
\begin{enumerate}
\item
$$
\sum_{\chi \in (G^2)^\perp \sm \{\chi_0\}} \chi(S)^2=2^r\sum_{j=0}^{2^r-1}s_j^2-s^2 \qquad \text{and} \qquad \sum_{\chi \in (G^2)^\perp \sm \{\chi_0\}} \chi(T)^2=2^r\sum_{j=0}^{2^r-1}t_j^2-t^2.
$$
\item
$$
2^r\sum_{j=0}^{2^r-1}s_j^2-s^2 \le (2^r-1)d^2 \qquad \text{and} \qquad 2^r\sum_{j=0}^{2^r-1}t_j^2-t^2 \le (2^r-1)d^2.
$$
\item Let $\ol{s}$ and $\ol{t}$ denote the least nonnegative residues of $s$ and $t$ modulo $2^r$. Then
$$
\ol{s}\,(2^r-\ol{s}) \le (2^r-1)d^2 \qquad \text{and} \qquad \ol{t}\,(2^r-\ol{t}) \le (2^r-1)d^2.
$$
\end{enumerate}
\end{theorem}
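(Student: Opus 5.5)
The plan is to apply Parseval/Fourier inversion on the quotient group $G/G^2$, which is elementary abelian of order $2^r$. By \eqref{eqn-indchar}, the characters in $(G^2)^\perp$ correspond bijectively to the characters of $G/G^2$. Moreover, for $\chi\in(G^2)^\perp$ we have $\chi(S)=\sum_{j}s_j\,\wti{\chi}(C_j)$, where we regard $C_j$ as an element of $G/G^2$. Thus $\chi(S)=\wti{\chi}(\pi(S))$, where $\pi(S)=\sum_j s_jC_j\in\Z[G/G^2]$ is the image of $S$ under the canonical projection.

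\textbf{Part 1.} We compute $\sum_{\chi\in(G^2)^\perp}\chi(S)^2$. Since $\chi$ is real-valued, $\chi(S)^2=\chi(S)\overline{\chi(S)}$. Expanding gives
$$\sum_{\wti\chi}\sum_{j,k}s_js_k\wti\chi(C_jC_k^{-1}).$$
By orthogonality of characters of $G/G^2$, the inner sum over $\wti\chi$ equals $2^r$ when $j=k$ and $0$ otherwise. The total is therefore $2^r\sum_j s_j^2$. This is the Parseval identity, equivalently the coefficient at the identity in Proposition~\ref{prop-fourier} applied to $\pi(S)\pi(S)^{(-1)}$. Subtracting the principal term $\chi_0(S)^2=s^2$ gives part 1. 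The same argument applies to $T$.

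\textbf{Part 2.} By Theorem~\ref{thm-divisibility}(1), for each nonprincipal $\chi\in(G^2)^\perp$ the values $\chi(S)$ and $\chi(T)$ are nonzero integers with $\chi(S)\chi(T)=d$. Hence $|\chi(S)|\le|d|$ and $|\chi(T)|\le|d|$, since the other factor has absolute value at least $1$. It follows that $\chi(S)^2\le d^2$, and summing over the $2^r-1$ nonprincipal characters, together with part 1, yields part 2.

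\textbf{Part 3.} This is the only step needing a small extremal argument. We need a lower bound for $2^r\sum_j s_j^2-s^2$ over nonnegative integers $s_j$ with $\sum_j s_j=s$, where there are $2^r$ indices. Write $s=q2^r+\ol{s}$. The sum of squares is minimized when the $s_j$ are as equal as possible: $\ol{s}$ of them equal to $q+1$ and the rest equal to $q$. This follows by the standard balancing argument, since replacing $s_j\ge s_k+2$ by $s_j-1,\,s_k+1$ strictly decreases the sum of squares. For the balanced configuration,
$$2^r\bigl(2^rq^2+2q\ol{s}+\ol{s}\bigr)-(q2^r+\ol{s})^2=\ol{s}(2^r-\ol{s}).$$
Combining with part 2 gives $\ol{s}(2^r-\ol{s})\le(2^r-1)d^2$, and likewise for $t$. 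The only care needed is verifying this minimization identity; everything else follows directly from earlier results.
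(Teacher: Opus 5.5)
Your proposal is correct and follows essentially the same route as the paper. Part 1 projects $S$ to $\Z[G/G^2]$ and applies Fourier inversion/orthogonality, part 2 uses the bound $|\chi(S)|\le|d|$ from Theorem~\ref{thm-divisibility}(1), and part 3 minimizes over the balanced configuration; your explicit exchange argument for that minimization usefully fills in a step the paper only asserts.
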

\begin{proof}
It suffices to prove the corresponding results for $S$, as the analogous results for $T$ follow similarly. Set $K=G/G^2$. Let $\pi \colon G \to K$ be the canonical projection. Each $\chi \in (G^2)^\perp$ induces a unique $\wti{\chi} \in \wh{K}$ via $\chi=\wti{\chi} \circ \pi$, and this correspondence is a bijection between $(G^2)^\perp$ and $\wh{K}$. Recall that $\{ C_0=G^2, C_1, \dots, C_{2^r-1} \}$ denotes the complete set of cosets of $G^2$ in $G$. For each $0 \le j \le 2^r-1$, let
$$
s_j=|S \cap C_j|.
$$
Setting $\ol{S}=\pi(S)=\sum_{j=0}^{2^r-1}s_j \pi(C_j) \in \Z[K]$, we have $\chi(S)=\wti{\chi}(\ol{S})$ for each $\chi \in (G^2)^\perp$. 
Noting that $K$ is an elementary abelian $2$-group and applying the Fourier inversion formula, we have
$$
\sum_{\chi \in (G^2)^\perp} \chi(S)^2=\sum_{\wti{\chi} \in \wh{K}} \wti{\chi}\bigl(\ol{S}\,\ol{S}\bigr)=|K| \cdot \bigl[\ol{S}\,\ol{S}\bigr]_{C_0}=2^r\sum_{j=0}^{2^r-1}s_j^2.
$$
Together with $\chi_0(S)^2=s^2$, where $\chi_0$ is the principal character of $G$, we have 

\begin{equation}
\label{eqn-Smoment}
\sum_{\chi \in (G^2)^\perp \sm \{\chi_0\}} \chi(S)^2=2^r\sum_{j=0}^{2^r-1}s_j^2-s^2,
\end{equation}
establishing part 1.

{By Theorem \ref{thm-divisibility}(1), for each $\chi \in (G^2)^\perp \sm \{\chi_0\}$, $\chi(S)$ is an integer dividing $d$. Therefore, $|\chi(S)| \le |d|$ for each $\chi \in (G^2)^\perp \sm \{\chi_0\}$.} Together with equation \eqref{eqn-Smoment}, we derive part 2. 

Write $s=q2^r+\ol{s}$ with $q \ge 0$ and $0 \le \ol{s} \le 2^r-1$. Given that $\sum_{j=0}^{2^r-1} s_j=s$, the sum $\sum_{j=0}^{2^r-1} s_j^2$ is minimized when the $s_j$ are as equal as possible. Therefore,
$$
2^r\sum_{j=0}^{2^r-1} s_j^2-s^2 \ge 2^r\bigl[\ol{s}\,(q+1)^2+(2^r-\ol{s})q^2\bigr]-(2^rq+\ol{s})^2=\ol{s}\,(2^r-\ol{s}).
$$
Together with part 2, we derive part 3.
\end{proof}

As a corollary, we have the following result concerning elementary abelian $2$-groups. 

\begin{corollary}
\label{cor-elemabnonexist}
Let $m \ge 1$ and let $G=\Z_2^m$. If $d=\mu-\la \in \{\pm 1,\pm 2\}$, then there does not exist a basic $(\la,\mu)$-fold factorization of $G$.
\end{corollary}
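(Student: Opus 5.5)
The plan is to specialize Theorem~\ref{thm-divisibility} and part~1 of Theorem~\ref{thm-moment} to $G=\Z_2^m$. The key point is that $G$ has exponent $2$, so $G^2=\{e_G\}$. Hence $2^r=n=2^m$, every coset $C_j$ is a singleton, and $(G^2)^\perp=\wh{G}$. Consequently $s_j\in\{0,1\}$, so $\sum_j s_j^2=s$. Part~1 of Theorem~\ref{thm-moment} then reads
$$
\sum_{\chi\neq\chi_0}\chi(S)^2=ns-s^2=s(n-s),\qquad \sum_{\chi\ne\chi_0}\chi(T)^2=t(n-t),
$$
and these sums run over all $n-1$ nonprincipal characters.

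Suppose, for contradiction, that $(S,T)$ is a basic $\F[G,s,t,\la,\mu]$ with $d\in\{\pm1,\pm2\}$. From \eqref{eqn-basic} we have $1\le\mu\le s-1$, so $2\le s\le t\le n/2$; in particular $n\ge4$. By Theorem~\ref{thm-divisibility}(1), for every nonprincipal $\chi$ the integers $\chi(S)$ and $\chi(T)$ satisfy $\chi(S)\chi(T)=d$. By Theorem~\ref{thm-divisibility}(2), $\chi(S)\equiv s$ and $\chi(T)\equiv t \pmod 2$, and these parities do not depend on $\chi$.

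If $d=\pm1$, then $\chi(S)=\pm1$ for every nonprincipal $\chi$. If $d=\pm2$, then exactly one of $\chi(S),\chi(T)$ is odd, and it has absolute value $1$. By the parity constraint, which of the two is odd is determined globally by whether $s$ or $t$ is odd. In every case there is a factor $X\in\{S,T\}$, of size $x\in\{s,t\}$, with $\chi(X)^2=1$ for all $n-1$ nonprincipal characters. The moment identity then gives
$$
x(n-x)=n-1,\quad\text{equivalently}\quad (x-1)(n-x-1)=0,
$$
so $x\in\{1,n-1\}$.

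This contradicts $2\le s\le t\le n/2<n-1$, which holds because $n\ge4$. Hence no basic factorization exists. The main step to get right is the parity bookkeeping in the case $d=\pm2$. One must check that the odd factor is the same for all characters, so that the constraint $\chi(X)^2=1$ can be summed over all nonprincipal $\chi$. This is guaranteed by the congruence $\chi(S)\equiv s \pmod 2$ from Theorem~\ref{thm-divisibility}(2).
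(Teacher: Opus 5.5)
Your proof is correct and follows essentially the same route as the paper. You specialize Theorem~\ref{thm-moment}(1) to $G^2=\{e_G\}$ to get $\sum_{\chi\ne\chi_0}\chi(X)^2=x(n-x)$, use Theorem~\ref{thm-divisibility}(1),(2) to find a factor $X$ with $\chi(X)^2=1$ for every nonprincipal $\chi$ (the parity of $s$ decides which factor when $|d|=2$), and conclude $x\in\{1,n-1\}$; deriving $2\le s\le t\le n/2<n-1$ directly from \eqref{eqn-basic} is a slightly cleaner finish than the paper's appeal to Remark~\ref{rem-basic}.
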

\begin{proof}
Assume for contradiction that $(S,T)$ is a basic $\F[G,s,t,\la,\mu]$. Since $G=\Z_2^m$, we have $G^2=\{e_G\}$ and every $\chi \in \wh{G}$ is of order at most $2$. In this case, $s_j \in \{0,1\}$ and $\sum_{j=0}^{2^r-1} s_j^2=\sum_{j=0}^{2^r-1} s_j=s$. Then equation \eqref{eqn-Smoment} leads to 
\begin{equation}
\label{eqn-elemab}
\sum_{\chi \in \wh{G} \sm \{\chi_0\}} \chi(S)^2=s(2^m-s).
\end{equation}

We consider the two cases of $|d|=1$ and $|d|=2$ separately. 
If $|d|=1$, then by part 1 of Theorem \ref{thm-divisibility}, $\chi(S)^2=1$. Equation \eqref{eqn-elemab} now yields
$$
s(n-s)=\sum_{\chi \in \wh{G} \sm \{\chi_0\}} \chi(S)^2=n-1,
$$
which implies $s \in \{1,2^m-1\}$. In view of Remark \ref{rem-basic}, this contradicts $(S,T)$ being basic.

If $|d|=2$, then by part 1 of Theorem \ref{thm-divisibility}, $|\chi(S)| \cdot |\chi(T)|=2$ for each $\chi \in \wh{G} \sm \{\chi_0\}$, and consequently $\{|\chi(S)|, |\chi(T)|\}=\{1,2\}$. By part 2 of Theorem \ref{thm-divisibility}, $\chi(S) \equiv s \pmod 2$ and $\chi(T) \equiv t \pmod 2$ for each $\chi \in \wh{G} \sm \{\chi_0\}$.  

If $s$ is odd, then $\chi(S)$ is odd and $\chi(S)^2=1$ for each $\chi \in \wh{G} \sm \{\chi_0\}$. As above, equation \eqref{eqn-elemab} gives $s(n-s)=n-1$ and $s \in \{1,2^m-1\}$, contradicting $(S,T)$ being basic.

If $s$ is even, then $t$ is odd. A similar argument shows $t \in \{1,2^m-1\}$, contradicting $(S,T)$ being basic.
\end{proof}



There are  $26$ basic parameter sets whose nonexistence follows from the results of this section. They are presented in Table \ref{tab-excluded}.

\begin{table}[tb]
\caption{Nonexistence of factorizations in abelian groups of order at most $32$.}
\label{tab-excluded}
\begin{center}
\begin{tabular}{|c|c|c|c|c|}
\hline
$G$ & $2^r$ & $(s,t,\la,\mu)$ & $d$ & authority \\
\hline
$\zed_{2}^{3}$ & $8$ & $(3,3,1,2)$ & $+1$ & Corollary \ref{cor-elemabnonexist} \\
$\zed_{14}$ & $2$ & $(4,4,1,3)$ & $+2$ & Theorem \ref{thm-divisibility}(3) \\
$\zed_{2}^{4}$ & $16$ & $(7,7,3,4)$ & $+1$ & Corollary \ref{cor-elemabnonexist} \\
$\zed_{22}$ & $2$ & $(4,6,1,3)$ & $+2$ & Theorem \ref{thm-divisibility}(3) \\
$\zed_{22}$ & $2$ & $(8,8,3,1)$ & $-2$ & Theorem \ref{thm-divisibility}(3) \\
$\zed_{6} \times \zed_{2}^{2}$ & $8$ & $(5,5,1,2)$ & $+1$ & Theorem \ref{thm-moment}(3) \\
$\zed_{6} \times \zed_{2}^{2}$ & $8$ & $(11,11,5,6)$ & $+1$ & Theorem \ref{thm-moment}(3) \\
$\zed_{26}$ & $2$ & $(8,10,3,5)$ & $+2$ & Theorem \ref{thm-divisibility}(3) \\
$\zed_{30}$ & $2$ & $(4,8,1,3)$ & $+2$ & Theorem \ref{thm-divisibility}(3) \\
$\zed_{8} \times \zed_{2}^{2}$ & $8$ & $(3,11,1,2)$ & $+1$ & Theorem \ref{thm-moment}(3) \\
$\zed_{8} \times \zed_{2}^{2}$ & $8$ & $(5,13,2,3)$ & $+1$ & Theorem \ref{thm-moment}(3) \\
$\zed_{4}^{2} \times \zed_{2}$ & $8$ & $(3,11,1,2)$ & $+1$ & Theorem \ref{thm-moment}(3) \\
$\zed_{4}^{2} \times \zed_{2}$ & $8$ & $(5,13,2,3)$ & $+1$ & Theorem \ref{thm-moment}(3) \\
$\zed_{4} \times \zed_{2}^{3}$ & $16$ & $(3,11,1,2)$ & $+1$ & Theorem \ref{thm-moment}(3) \\
$\zed_{4} \times \zed_{2}^{3}$ & $16$ & $(5,13,2,3)$ & $+1$ & Theorem \ref{thm-moment}(3) \\
$\zed_{4} \times \zed_{2}^{3}$ & $16$ & $(7,9,2,1)$ & $-1$ & Theorem \ref{thm-moment}(3) \\
$\zed_{4} \times \zed_{2}^{3}$ & $16$ & $(7,14,3,5)$ & $+2$ & Theorem \ref{thm-moment}(3) \\
$\zed_{4} \times \zed_{2}^{3}$ & $16$ & $(9,14,4,2)$ & $-2$ & Theorem \ref{thm-moment}(3) \\
$\zed_{2}^{5}$ & $32$ & $(3,11,1,2)$ & $+1$ & Corollary \ref{cor-elemabnonexist} \\
$\zed_{2}^{5}$ & $32$ & $(5,13,2,3)$ & $+1$ & Corollary \ref{cor-elemabnonexist} \\
$\zed_{2}^{5}$ & $32$ & $(6,11,2,4)$ & $+2$ & Corollary \ref{cor-elemabnonexist} \\
$\zed_{2}^{5}$ & $32$ & $(7,9,2,1)$ & $-1$ & Corollary \ref{cor-elemabnonexist} \\
$\zed_{2}^{5}$ & $32$ & $(7,14,3,5)$ & $+2$ & Corollary \ref{cor-elemabnonexist} \\
$\zed_{2}^{5}$ & $32$ & $(9,14,4,2)$ & $-2$ & Corollary \ref{cor-elemabnonexist} \\
$\zed_{2}^{5}$ & $32$ & $(10,13,4,6)$ & $+2$ & Corollary \ref{cor-elemabnonexist} \\
$\zed_{2}^{5}$ & $32$ & $(15,15,7,8)$ & $+1$ & Corollary \ref{cor-elemabnonexist} \\
\hline
\end{tabular}
\end{center}
\end{table}

\section{Summary}
\label{summary.sec}


$(\lambda, \mu)$-fold factorizations of groups offer a unified framework for many known and well-studied structures. In this paper, we develop a basic theory for these factorizations, including explicit constructions and necessary conditions for their existence. In a forthcoming paper, computer algorithms are developed and used in conjunction with our existence and nonexistence results  to enumerate all non-equivalent $(\lambda, \mu)$-fold factorizations (with $\lambda \neq \mu$) for all abelian groups of order at most 34.

\section*{AI Usage Statement}

{
The authors utilized ChatGPT to assist in the following:
\begin{enumerate}[label=\textup{(\arabic*)}]
\item To inspire the concept of overlap deviation in Definition~\ref{def-dev}. 
\item To reveal the connection in Section~\ref{sec-zerooffset} between $(\la,\mu)$-fold factorizations of cyclic groups and binary sequence pairs with two-level correlation, and propose an initial version of the product construction in Theorem~\ref{thm-prod}.
\item To verify the data presented in Table~\ref{tab-excluded}.
\end{enumerate}
}

\section*{Acknowledgements}

Shuxing Li’s research is supported by the U.S. National Science Foundation Grant DMS-2452236 and the University of Delaware Research Foundation Strategic Initiative (UDRF-SI) program.

\end{document}